\def\TeXRoot{.}
\begin{document}

\maketitle

\begin{abstract}
  In this paper, we introduce a Grothendieck topology on the category of totally bounded metric spaces and develop a theory of stacks with respect to this topology. 
  We further define the fine moduli stack of compact metric spaces and prove that its coarse moduli space is isometric to the Gromov--Hausdorff space.
\end{abstract}

\tableofcontents

\section*{Introduction}

The space \(\topGH\) of isometry classes of compact metric spaces equipped with the Gromov--Hausdorff distance plays an important role in metric geometry as a coarse moduli space of compact metric spaces.
However, since there exist nontrivial families parametrizing mutually isometric objects, the Gromov--Hausdorff space does not serve as a fine moduli space.
In contrast, in algebraic geometry, the theory of algebraic stacks was introduced by Deligne--Mumford and later generalized and developed by Artin. 
This theory provides fine moduli for a wide range of moduli problems in the form of algebraic stacks.
Today, algebraic stacks are widely recognized as the correct geometric framework for discussing moduli problems in algebraic geometry.
In this paper, we construct a fine moduli stack of compact metric spaces by adapting the theory of moduli stacks to metric spaces.
We show that the theory of stacks provides an effective framework for moduli problems in metric geometry as well.
This theory can be regarded as a partial realization of what was suggested by Gromov in \cite[{3.11~{\small\(\frac{3}{4}_+\)}}]{Gromov}.

% コンパクト距離空間の等長類の集合に Gromov--Hausdorff 距離を備えた空間は、コンパクト距離空間の coarse moduli space として、距離空間の幾何学において重要な役割を果たしています。
% しかしながら、互いに同型な対象をパラメタライズする非自明な族が存在することにより、Gromov--Hausdorff 空間は fine moduli にはなっていません。
% 代数幾何学においては、Deligne と Mumford [...] により導入され、Artin [...] により一般化・発展された algebraic stack の理論により、algebraic stack としてさまざまな moduli 問題に対して fine moduli が構成されます。
% 今日、algebraic stack は代数幾何学において moduli 問題を議論するための正しい幾何学的概念であると広く認識されています。
% 本論文では、代数幾何学における moduli stack の理論を模倣して、コンパクト距離空間の fine moduli space を構成し、stack の理論が距離空間の文脈においても moduli 問題に対する有効な framework を与えることを demonstrate します。
% これは Gromov が [3.11 3/4+, Gromov] において示唆している内容の部分的な実現に当たります。

In this paper, in order to develop the theory of stacks appropriately in the context of metric geometry, we introduce a Grothendieck topology---what we call the \(\lhftop\) topology \confer{\cref{def-lhf-top}}---on the category \(\tbMet\) of totally bounded metric spaces.
By viewing proper \Hfibs as families of compact metric spaces parametrized by metric spaces, we then define a stack \(\stGH\) over \(\tbMet_{\lhftop}\) \confer{\cref{def-stGH}}.
We further show that \(\stGH\) can be presented as a quotient stack \confer{\cref{prop-stGH-quot}}.
We call such a stack a naive metric stack of quotient type \confer{\cref{def-naive-metric-stack} and \cref{def-of-quot-type}}.
Finally, we prove that \(\stGH\) satisfies the appropriate properties of a fine moduli in the category of naive metric stacks of quotient type \confer{\cref{prop-fine-moduli-GH}}.

% 本論文では、距離空間論の文脈において適切に stack の理論を展開するために、我々は全有界距離空間の圏 \tbMet に Grothendieck topology を定義します (cf. \cref{def-lhf-top})。
% また、proper \Hfib を距離空間によりパラメタライズされたコンパクト距離空間の族とみなすことによって、\tbMet 上の stack \stGH を定義します (cf. \cref{def-stGH})。
% そして \stGH が距離空間を sheaf で割った形の quotient stack (これを我々は naive metric stack of quotient type と呼ぶ, cf. \cref{def-naive-metric-stack} and \cref{def-of-quot-type}) として表示できること (cf. \cref{prop-stGH-quot}) を示し、naive metric stack of quotient type の圏において \stGH が fine moduli としてふさわしい振る舞いをすること (cf. \cref{prop-fine-moduli-GH}) を証明する。

Our main result is the following:

\begin{theoremA}
  The moduli stack \(\stGH\) of compact metric spaces \confer{\cref{def-stGH}} is a naive metric stack of quotient type \confer{\cref{prop-stGH-quot}}.
  Moreover, for any naive metric stack \(\mcX\) of quotient type, pulling back the universal object \(\stGH_1\to\stGH\) induces a one-to-one correspondence between representable proper \Hfibs \(\mcP\to \mcX\) \confer{\cref{def-rep-prop-Hfib}} of stacks in groupoids over \(\tbMet_{\lhftop}\) and morphisms \(\mcX\to \stGH\) of stacks in groupoids over \(\tbMet_{\lhftop}\) \confer{\cref{prop-fine-moduli-GH}}.
\end{theoremA}

\subsection*{Outline}

In \cref{section: category Met}, we study a category-theoretic property of the category of extended pseudometric spaces \(\epMet\) and the category of presheaves on \(\tbMet\). 
In \cref{section: Hfib}, we study basic properties of morphisms of metric spaces, called \textbf{\Hfibs}, from the viewpoint of metric geometry, and establish the foundational results needed in this paper.
In \cref{section: lhftop}, we define a Grothendieck topology---what we call the \(\lhftop\) topology \confer{\cref{def-lhf-top}}---on the category of totally bounded metric spaces using the notion of \textbf{\morphLHF} which is obtained by localizing the notion of \Hfib.
In \cref{section: naive metric stack}, we develop the basic theory of stacks over the site \(\tbMet_{\lhftop}\), define good moduli spaces as morphisms to extended pseudometric spaces satisfying an appropriate universality property, and investigate their structural features.
In \cref{section: GH}, we define the moduli stack of \(n\)-pointed compact metric spaces \(\stGH_n\), study its fundamental properties, and finally prove the main theorem.

\subsection*{Acknowledgements}

First, I would like to express my deep gratitude to my friend M. Nakagawa, who has strongly encouraged my mathematical activities. 
Without his significant encouragement, this paper would not have been completed.
I am also grateful to Y. Ishiki for helpful discussions on metric-theoretic properties of submetries.
Finally, I would like to thank all my friends who have encouraged my pursuit of mathematical research on my own initiative.

\subsection*{Disclosure of AI Assistance}

During the preparation of this paper, the author made use of large language models as auxiliary tools. 
In particular, ChatGPT was used repeatedly for English proofreading and stylistic refinement, and, in non-technical parts of the paper such as the introduction, for discussion of the overall exposition and organization. 
These uses substantially improved the efficiency of the writing process.
In addition, Gemini was consulted to identify the existence of prior work related to local submetries and to generate a typographical symbol \qedcatsymbol\ for the end of proofs.
All mathematical content, results, proofs, and final decisions regarding exposition are entirely the responsibility of the author.

% \subsection*{Disclaimer}

% This work was written solely at the author's personal initiative, motivated by encouragement received in private conversation, and does not constitute a product of \SMBCNikko
% \SMBCNikko makes no representations or warranties regarding the completeness, accuracy, or reliability of the information contained herein, and accepts no responsibility or liability for any consequences arising from its use.

%%%%%%%%%%%%%%%%%%%%%%%%%%%%%%%%%%%%%%%%%%%%%%%%%%%%%%%%%%%%%%%
%%%%%%%%%%%%%%%%%%%%%%%%%%%%%%%%%%%%%%%%%%%%%%%%%%%%%%%%%%%%%%%
%%%%%%%%%%%%%%%%%%%%%%%%%%%%%%%%%%%%%%%%%%%%%%%%%%%%%%%%%%%%%%%
%%%%%%%%%%%%%%%%%%%%%%%%%%%%%%%%%%%%%%%%%%%%%%%%%%%%%%%%%%%%%%%
%%%%%%%%%%%%%%%%%%%%%%%%%%%%%%%%%%%%%%%%%%%%%%%%%%%%%%%%%%%%%%%
%%%%%%%%%%%%%%%%%%%%%%%%%%%%%%%%%%%%%%%%%%%%%%%%%%%%%%%%%%%%%%%
%%%%%%%%%%%%%%%%%%%%%%%%%%%%%%%%%%%%%%%%%%%%%%%%%%%%%%%%%%%%%%%

\subsection*{Notations and Conventions}

The notation \(\R\) denotes the set of real numbers, equipped with its structure as complete ordered field and the ordinary distance \(d_{\R}(x,y) \dfn |x-y|\) (\(\forall x,y\in \R\)).
The notation \(\N, \Z, \Q\) denotes the subsets of \(\R\) consisting of non-negative integers, integers, and rational numbers, respectively.
By a slight abuse of notation, we endow \(\N\), \(\Z\), and \(\Q\) with the additional structures induced from \(\R\).

We write \(\bR\dfn \R \cup \{-\infty, \infty\}\).
We regard \(\bR\) as a complete totally ordered set with respect to the usual order on \(\R\), extended by declaring \((-\infty,\infty) = (\min\bR,\max\bR)\).
For any \(a,b\in \bR\), we write \(a\lor b \dfn \max\{a,b\}\) and \(a\land b\dfn \min\{a,b\}\).
We write \(\bRz\dfn\left\{ x \geq 0\,\middle|\, x\in \bR\right\}\) and \(\Rz = \bRz\cap \R\).
We write \(\bRp \dfn \bRz\setminus \{0\}\) and \(\Rp \dfn \bRp \setminus \{\infty\}\).
We regard \(\bRz\) as a totally ordered commutative monoid under the usual addition of real numbers, with the unique absorbing element \(\infty\in \bRz\), and with the order induced from \(\bR\).

\subsection*{Categories}

The notation \(\sfSet\) will be used to denote the (big) category of all (small) sets.
The notation \(\Top\) will be used to denote the (big) category of all (small) topological spaces.

Let \(\mcC, \mcD\) be categories.
We write \(\Fun(\mcC, \mcD)\) for the category of functors and natural transformations.
We write \(\PSh(\mcC) = \hat{\mcC}\dfn \Fun(\mcC^{\op}, \sfSet)\).
If \(\mcC\) is locally small, then we write \(h^{\mcC}_{\bullet}:\mcC\to \PSh(\mcC)\) for the Yoneda embedding.
By a slight abuse of notations, we often omit the superscript \(\mcC\) and write \(h_{\bullet}\) simply.

\subsection*{Metric Spaces}

An \textbf{extended pseudometric space} \(X = (\uSet{X}, d_X)\) consists of a set \(\uSet{X}\) together with a function \(d_X:\uSet{X}\times\uSet{X}\to \bRz\) satisfying \(d_X(x,y) + d_X(y,z) \geq d_X(x,z) = d_X(z,x)\) for all \(x,y,z\in \uSet{X}\).
% the condition that for any \(x,y,z\in \uSet{X}\), \[d_X(x,y) + d_X(y,z) \geq d_X(x,z) = d_X(z,x).\]
We refer to \(\uSet{X}\) as the \textbf{underlying set} of \(X\), and to \(d_X\) as the \textbf{distance} on \(X\).
By a slight abuse of notation, we write \(x\in X\) to mean \(x\in \uSet{X}\).
A \textbf{pseudometric space} is an extended pseudometric space \(X\) such that \(\infty\not\in\im(d_X)\).
An \textbf{extended metric space} is an extended pseudometric space \(X\) such that for any \(x,y\in X\), the condition \(d_X(x,y)=0\) implies \(x=y\).
In particular, a \textbf{metric space} is both an extended metric space and pseudometric space.

Let \(X, Y\) be extended pseudometric spaces.
A \textbf{morphism} of extended pseudometric spaces \(f:X\to Y\) is a map \(f:\uSet{X}\to\uSet{Y}\) such that for any \(x,x'\in X\), it holds that \(d_X(x,x')\geq d_Y(f(x),f(x'))\), that is, a morphism is precisely a \(1\)-Lipschitz map.

We write \(\PEMet\) for the category of (small) extended pseudometric spaces and 1-Lipschitz maps.
We write \(\EMet\subset \PEMet\) for the full subcategory consisting of the extended metric spaces.
We write \(\PMet\subset \PEMet\) for the full subcategory consisting of the pseudometric spaces.
We write \(\Met\dfn \PMet\cap \EMet \subset \PEMet\) for the full subcategory consisting of the metric spaces.
We write \(\tbMet\) for the full subcategory of \(\Met\) consisting of the totally bounded metric spaces.

\begin{remark*}
  Since a compact metric space is of cardinality at most continuum, and the completion of a totally bounded metric space is compact, it follows that a totally bounded metric space is of cardinality at most continuum. 
  Hence, in particular, \(\tbMet\) is essentially small (i.e., there exists a small set of objects of \(\tbMet\) such that any object of \(\tbMet\) is isomorphic to an object included in this set).
\end{remark*}
\section{The Category of Metric Spaces}
\label{section: category Met}

In this section, we first provide, for the reader’s convenience, a proof of the well-known fact that the category \(\epMet\) of extended pseudometric spaces is complete and cocomplete \confer{\cref{lem: lim and colim in PEMet}}.
As a consequence, the inclusion functor \(\tbMet\inj \epMet\) admits a left Kan extension along the Yoneda embedding, yielding an adjoint pair \(\PSh(\tbMet) \leftrightarrows \epMet\). 
\[\begin{tikzcd}
  {\PSh(\tbMet)} \ar[rd, xshift=3mm, yshift=3mm] &[1cm]  \\
  {\tbMet} \ar[u] \ar[r, hook] & {\PEMet}. \ar[ul, xshift=4mm, yshift=4mm]
\end{tikzcd}\]
We then verify that the right adjoint functor \(\epMet\to \PSh(\tbMet)\) is  fully faithful, and investigate the structure of the objects obtained as images of presheaves under the left Kan extension \(\PSh(\tbMet)\to \epMet\).

\begin{notation}
  We write \(\uSet{(-)}:\PEMet\to \sfSet\) for the functor which maps a metric space \((-)\) to its underlying set.
\end{notation}

\begin{remark}\label{rmk-uSet-lr-adj}
  The functor \(\uSet{(-)}:\PEMet\to \sfSet\) admits both a left adjoint and a right adjoint.
  The left adjoint assigns to each set the extended pseudometric space in which the distance between any two distinct points is \(\infty\).
  The right adjoint assigns to each set the extended pseudometric space in which the distance between any two distinct points is \(0\).
  Consequently, for any diagram \(F:I\to \PEMet\), the natural morphisms \(\uSet{\lim F(-)}\to \lim \uSet{F(-)}\) and \(\colim \uSet{F(-)} \to \uSet{\colim F(-)}\) are isomorphisms.
\end{remark}

\begin{lemma}\label{cor: epi mono in PEMet}
  Let \(f:X\to Y\) be a morphism in \(\PEMet\).
  Then, the following assertions hold:
  \begin{assertion}
    \item \label{cor: epi mono in PEMet ass: mono}
    \(f\) is a monomorphism in \(\PEMet\) if and only if \(\uSet{f}:\uSet{X}\to \uSet{Y}\) is injective.
    \item \label{cor: epi mono in PEMet ass: epi}
    \(f\) is an epimorphism in \(\PEMet\) if and only if \(\uSet{f}:\uSet{X}\to \uSet{Y}\) is surjective.
  \end{assertion}
\end{lemma}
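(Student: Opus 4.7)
The strategy is to exploit \cref{rmk-uSet-lr-adj}, which tells us that $\uSet{(-)}\colon\PEMet\to\sfSet$ has both a left and a right adjoint. This gives the two facts that drive the entire argument: $\uSet{(-)}$ preserves all limits and all colimits, and $\uSet{(-)}$ is faithful. The latter is also clear directly from the definitions, since a morphism in $\PEMet$ is merely a map of underlying sets satisfying a $1$-Lipschitz inequality, and is therefore determined by its underlying set-map.

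For the ``only if'' direction of assertion (1), I would recall that being a monomorphism is a limit condition (for instance, $f$ is a monomorphism iff the diagonal into the kernel pair is an isomorphism). Since $\uSet{(-)}$ preserves limits, $f$ mono in $\PEMet$ forces $\uSet{f}$ to be mono in $\sfSet$, i.e., injective. Dually, for the ``only if'' direction of (2), being an epimorphism is a colimit condition (codiagonal out of the cokernel pair), so preservation of colimits by $\uSet{(-)}$ yields that $f$ epi in $\PEMet$ implies $\uSet{f}$ epi in $\sfSet$, i.e., surjective.

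For the converse implications I would use faithfulness of $\uSet{(-)}$ together with the fact that in $\sfSet$ injections and surjections are left- and right-cancellative on arrows. Explicitly, if $\uSet{f}$ is injective and $g,h\colon Z\to X$ satisfy $fg=fh$, then applying $\uSet{(-)}$ gives $\uSet{f}\,\uSet{g}=\uSet{f}\,\uSet{h}$, so $\uSet{g}=\uSet{h}$ by injectivity of $\uSet{f}$, and then $g=h$ by faithfulness; hence $f$ is a monomorphism. The epi case is completely symmetric: surjectivity of $\uSet{f}$ right-cancels $\uSet{f}$ in $\uSet{g}\,\uSet{f}=\uSet{h}\,\uSet{f}$, and faithfulness concludes.

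I do not expect any real obstacle: the statement is a formal consequence of \cref{rmk-uSet-lr-adj} together with faithfulness of $\uSet{(-)}$, and the only mild point worth flagging is that the converse halves do not in fact invoke either adjoint---only the observation that morphisms in $\PEMet$ are determined by their underlying maps of sets.
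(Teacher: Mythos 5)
Your proposal is correct and follows essentially the same route as the paper, whose proof simply invokes \cref{rmk-uSet-lr-adj}: the two adjoints give preservation of limits and colimits (hence the ``only if'' halves via kernel/cokernel pairs), while the ``if'' halves use only that a morphism in \(\PEMet\) is determined by its underlying map of sets, exactly as you flag.
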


\begin{proof}
  Both assertions in \cref{cor: epi mono in PEMet} follow immediately from \cref{rmk-uSet-lr-adj}.
\end{proof}

Based on \cref{rmk-uSet-lr-adj}, we can compute explicitly limits and colimits in \(\PEMet\) as follows:

\begin{lemma}\label{lem: lim and colim in PEMet}
  Let \(X_{\bullet}:I\to \PEMet\) be a diagram in \(\PEMet\).
  Then, the following assertions hold.
  \begin{assertion}
    \item \label{lem: lim and colim in PEMet ass: lim}
    For any \((x_i)_{i\in I}, (x_i')_{i\in I}\in \lim_{i\in I} \uSet{X_i}\), define
    \[d_{\lim}((x_i)_{i\in I}, (x_i')_{i\in I})\dfn \sup_{i\in I}d_{X_i}(x_i, x_i').\]
    Then, the pair \((\lim_{i\in I} \uSet{X_i}, d_{\lim})\) is an extended pseudometric space, and it is naturally isomorphic to the limit of the diagram \(X_{\bullet}\).
    Moreover, if each \(X_i\) is an extended metric space, then the limit \((\lim_{i\in I} \uSet{X_i}, d_{\lim})\) is also an extended metric space.
    \item \label{lem: lim and colim in PEMet ass: colim}
    For any \(i\in I\), let \(\alpha_i:\uSet{X_i}\to \colim_{i\in I}\uSet{X_i}\) denote the canonical map.
    For any \(x,x'\in \colim_{i\in I} \uSet{X_i}\), define
    \begin{align*}
      &d_{\colim}(x,x') \\
      &\dfn \inf\left\{\sum_{k=0}^N d_{X_{i_k}}(x_{i_k},x_{i_k}')\,\middle|\,\begin{aligned}
        &N\in\N, i_0,\cdots,i_N\in I, \\
        &x_{i_k}, x_{i_k}'\in X_{i_k} \ (\forall k\in \{0,\cdots, N\}) \\
        &\alpha_{i_0}(x_{i_0})=x, \ \alpha_{i_N}(x'_{i_N})=x', \\
        &\alpha_{i_k}(x_{i_k}') = \alpha_{i_{k+1}}(x_{i_{k+1}}) \ (\forall k\in \{0,\cdots, N-1\})
      \end{aligned}\right\}.
    \end{align*}
    Then, the pair \(X\dfn (\colim_{i\in I} \uSet{X_i}, d_{\colim})\) is an extended pseudometric space, and \(X\) is naturally isomorphic to the colimit of the diagram \(X_{\bullet}\).
  \end{assertion}
\end{lemma}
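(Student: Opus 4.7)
The plan is to construct explicit candidates using the given formulas, verify that each defines an extended pseudometric space, and then establish the universal property. The set-theoretic computation is essentially free: since \(\uSet{(-)}\) is both a left and a right adjoint by \cref{rmk-uSet-lr-adj}, the underlying set of any limit or colimit in \(\PEMet\) must coincide with the corresponding limit or colimit in \(\sfSet\). It therefore suffices to put the correct distance on these sets and verify 1-Lipschitz-ness of the induced maps.

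For \cref{lem: lim and colim in PEMet ass: lim}, I would first check that \(d_{\lim}\) is an extended pseudometric: symmetry is inherited coordinatewise, and the triangle inequality follows by taking the supremum over \(i\in I\) of the coordinatewise triangle inequalities. The canonical projections are 1-Lipschitz since \(d_{X_j}\leq \sup_{i} d_{X_i}\). For the universal property, given a cone \((f_i\colon Z \to X_i)_{i \in I}\) in \(\PEMet\), the unique set-theoretic map \(f\colon \uSet{Z}\to \lim_{i\in I} \uSet{X_i}\) compatible with the cone satisfies \(d_Z(z,z') \geq d_{X_i}(f_i(z), f_i(z'))\) for every \(i\), hence \(d_Z(z,z') \geq \sup_{i} d_{X_i}(f_i(z), f_i(z')) = d_{\lim}(f(z), f(z'))\), so \(f\) is 1-Lipschitz. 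The assertion about extended metric spaces is immediate: vanishing of the supremum forces vanishing of every coordinate distance, hence equality of tuples.

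For \cref{lem: lim and colim in PEMet ass: colim}, the pseudometric axioms are the main content. Reflexivity uses the trivial chain with \(N=0\) and \(x_{i_0}=x_{i_0}'\); symmetry follows from reversing a chain, which produces a chain of the same cost with the endpoints swapped; and the triangle inequality is obtained by concatenating, for any \(\varepsilon>0\), a chain realizing \(d_{\colim}(x,y)+\varepsilon/2\) with one realizing \(d_{\colim}(y,z)+\varepsilon/2\) (the gluing condition at the junction is satisfied since both sides represent the same class of \(y\)), and then letting \(\varepsilon\to 0\). The canonical maps \(\alpha_j\) are 1-Lipschitz via the trivial one-term chain. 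For the universal property, given a cocone \((g_i\colon X_i\to Z)_{i\in I}\), the induced set-theoretic map \(g\colon\colim_{i\in I} \uSet{X_i}\to \uSet{Z}\) satisfies, for any chain as in the definition of \(d_{\colim}(x,x')\),
\[d_Z(g(x), g(x')) \leq \sum_{k=0}^{N} d_Z\bigl(g_{i_k}(x_{i_k}), g_{i_k}(x_{i_k}')\bigr) \leq \sum_{k=0}^N d_{X_{i_k}}(x_{i_k}, x_{i_k}'),\]
where the first step uses the triangle inequality in \(Z\) together with the cocone identity \(g_{i_k}(x_{i_k}') = g_{i_{k+1}}(x_{i_{k+1}})\), and the second uses the fact that each \(g_{i_k}\) is 1-Lipschitz. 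Passing to the infimum yields 1-Lipschitz-ness of \(g\).

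The main obstacle I anticipate is the triangle inequality for \(d_{\colim}\): one must combine two near-infimal chains into a single chain while retaining control of the total cost, via the standard \(\varepsilon/2\)-argument sketched above. All remaining checks are straightforward bookkeeping given the explicit formulas and the adjoint-theoretic computation of the underlying sets.
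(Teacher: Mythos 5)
Your proposal is correct and follows essentially the same route as the paper: equip the set-level limit/colimit (computed via \cref{rmk-uSet-lr-adj}) with the explicit distances \(d_{\lim}\) and \(d_{\colim}\), check the pseudometric axioms, and verify the universal property by the supremum estimate in the limit case and the chain estimate (summing over a chain, using the cocone identities and 1-Lipschitz-ness of each \(g_{i_k}\), then passing to the infimum — equivalently the paper's \(\ep\)-near-optimal-chain argument) in the colimit case. The only cosmetic difference is that you spell out the axioms for \(d_{\colim}\) (reflexivity, symmetry, triangle inequality by chain concatenation), which the paper dismisses as straightforward.
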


\begin{proof}
  \Cref{lem: lim and colim in PEMet ass: lim} follows immediately from elementary discussion about extended pseudometric spaces.

  Next, we verify \cref{lem: lim and colim in PEMet ass: colim}.
  It is straightforward to verify that \(X = (\colim_{i\in I} \uSet{X_i}, d_{\colim})\) is an extended pseudometric space, and the family of canonical maps \((\alpha_i: \uSet{X_i}\to \uSet{X})_{i\in I}\) forms a functorial family of morphisms in \(\PEMet\).
  To verify a universal property of the colimit, let \(Y\in \PEMet\) be an object, \((f_i: X_i\to Y)_{i\in I}\) a functorial family of morphisms in \(\PEMet\), \(x,x'\in X\) elements, and \(\ep > 0\) a positive real number.
  Then, by the definition of \(d_{\colim}\), there exist an integer \(N\in \N\), indices \(i_0,\cdots,i_N\in I\), and elements \(x_{i_k}, x_{i_k}'\in X_{i_k}\) such that \(\alpha_{i_0}(x_{i_0})=x\), \(\alpha_{i_N}(x'_{i_N})=x'\), \(\alpha_{i_k}(x_{i_k}') = \alpha_{i_{k+1}}(x_{i_{k+1}})\) (\(\forall k\in\{0,\cdots, N-1\}\)), and
  \[d_X(x,x') \geq -\ep + \sum_{k=0}^N d_{X_{i_k}}(x_{i_k}, x_{i_k}').\]
  Write \(f: \uSet{X} = \colim_{i\in I} \uSet{X_i}\to \uSet{Y}\) for the induced map \confer{\cref{rmk-uSet-lr-adj}}.
  Then, it holds that \(f(x) = f(\alpha_{i_0}(x_{i_0})) = f_{i_0}(x_{i_0})\), and \(f(x') = f(\alpha_{i_N}(x_{i_N}')) = f_{i_N}(x_{i_N}')\).
  Hence, it holds that
  \begin{align*}
    d_X(x,x') &\geq -\ep + \sum_{k=0}^N d_{X_{i_k}}(x_{i_k}, x_{i_k}') \\
    &\geq -\ep + \sum_{k=0}^N d_Y(f_{i_k}(x_{i_k}), f_{i_k}(x_{i_k}')) \\
    &\geq -\ep + d_Y(f(x), f(x')).
  \end{align*}
  By letting \(\ep \to 0\), we conclude that \(d_X(x,x') \geq d_Y(f(x), f(x'))\).
  In particular, \(f\) is a morphism in \(\PEMet\).
  Finally, since \(\uSet{X} = \colim_{i\in I}\uSet{X_i}\), if there exist morphisms \(g,g':X\to Y\) in \(\PEMet\) such that \(g\circ \alpha_i = g'\circ \alpha_i\) for any \(i\in I\), then \(g = g'\).
  Thus, \(X\) is naturally isomorphic to the colimit of the diagram \(X_{\bullet}\).
  This completes the proof of \cref{lem: lim and colim in PEMet}.
\end{proof}

\begin{remark}
  The metric identification \confer{\cite[Example 2]{Jardine-Met-Rips}} yields a left adjoint functor \((-)^*: \PEMet\to \EMet\) to the inclusion functor \(\EMet\subset \PEMet\).
  Consequently, \(\EMet\) is complete and cocomplete.
\end{remark}

Next, we investigate the adjoint pair \(\PSh(\tbMet)\leftrightarrows \epMet\) obtained from the left Kan extension of the inclusion functor \(\tbMet\subset \epMet\) along the Yoneda embedding \(\tbMet\inj \PSh(\tbMet)\). 

\begin{notation}\label{def: univ adj}
  \
  \begin{enumerate}
    \item
    We write \(h_{(-)}\dfn \Hom_{\tbMet}(-, X):\tbMet\inj \PSh(\tbMet)\) for the Yoneda embedding. 
    By a slight abuse of notation, we also write \(h_{(-)}:\epMet\to \PSh(\tbMet)\) for the functor sending an extended pseudometric space \(X\) to the presheaf \(h_X\dfn \Hom_{\epMet}(-, X)\restriction_{\tbMet}:\tbMet^{\op}\subset \epMet^{\op}\to\sfSet\).
    \item
    We write \(\metRe:\PSh(\tbMet)\to \EPMet\) for the left Kan extension of the inclusion functor \(\tbMet\inj \PEMet\) along the Yoneda embedding \(\tbMet\to \PSh(\tbMet)\).
    \item \label{def: univ adj enumi: slice cat}
    Let \(\mcX\in \PSh(\tbMet)\) be an object.
    We write \(\tbMet_{/\mcX}\) for the comma category \confer{\cite[{\href{https://alg-d.com/math/kan_extension/comma.pdf}{\JPstr{コンマ圏}}}]{alg-d}} obtained as the lax pull-back of the diagram of categories \(*\xto{\mcX}\PSh(\tbMet)\xgets{h} \tbMet\):
    \[\begin{tikzcd}
      {*} \ar[r, "{\mcX}"] &[0.5cm] {\PSh(\tbMet)} \ar[rd, "{\metRe}", ""{name=L, below}, xshift=3mm, yshift=3mm] &[1cm]  \\
      {\tbMet_{/\mcX}} \ar[u] \ar[r] &
      {\tbMet}
        \ar[u, "{h}", hook] \ar[r, hook]
        \ar[lu, Rightarrow, shorten=1cm]
        \ar[Rightarrow, to=L, shorten=5mm] &
      {\PEMet},
    \end{tikzcd}\]
    where \(*\) denotes the discrete category with a single object, and \(\mcX: *\to \PSh(\tbMet)\) is the functor sending the unique object of \(*\) to \(\mcX\).
  \end{enumerate}
\end{notation}

Note that by \cite[{\href{https://alg-d.com/math/kan_extension/kan_extension.pdf}{\JPstr{Kan 拡張}}, \JPstr{系 30}}]{alg-d}, for any object \(\mcX\in\PSh(\tbMet)\), the natural morphism
\[\colim(\tbMet/\mcX \to \tbMet \xinj{h} \PSh(\tbMet)) \to \mcX\]
is an isomorphism in \(\PSh(\tbMet)\).
In particular, the natural morphism
\[\colim(\tbMet/\mcX \to \tbMet \xinj{i} \PEMet) \to \metRe(\mcX)\]
is an isomorphism in \(\PEMet\).
Moreover, by \confer{\cite[{\href{https://alg-d.com/math/kan_extension/kan_extension.pdf}{\JPstr{Kan 拡張}}, \JPstr{系 28}, \JPstr{定理 45}}]{alg-d}}, the functor \(h:\epMet\to \PSh(\tbMet)\) is a right adjoint of \(\mu\).

\begin{notation}
  We write \(*\dfn\{0\}\subset \R\) for the singleton equipped with the induced distance.
  For any \(r\in \bRp\), we write \(2_r\dfn \{0,r\}\subset \bRz\) for the two-point metric space equipped with the induced distance.
  For any \(r,s\in\bRp\) with \(r > s\), we write \(\iota_{rs}:2_r\to 2_s\) for the bijective morphism in \(\epMet\) such that \(\iota_{rs}(0) = 0\).
  For any \(r\in\bRp\), we write \(\iota_{0r}:*\to 2_r\) (resp. \(\iota_r:*\to 2_r\)) for the morphism in \(\epMet\) satisfying \(\iota_{0r}(0) = 0\) (resp. \(\iota_r(0) = r\)). 
  By a slight abuse of notation, when the meaning is clear from the context, we often omit the subscript \(r\) in \(\iota_{0r}\) and simply write \(\iota_0\).
\end{notation}

\begin{example}\label{eg: 2-r colim}
  Write \(F:\Rp\to \EMet\) for the diagram determined by \(F(r)\dfn 2_r\) and \(F(r\geq s)\dfn \iota_{rs}\), where \(\Rp\) is regarded as a category via its usual order.
  Then, the colimit of the composite \(F:\Rp\to \EMet \subset \PEMet\) has cardinality \(2\), and the distance of the distinct points is \(0\) \confer{\cite[Example 4]{Jardine-Met-Rips}}.
  Thus, the colimit of the diagram \(F:\Rp\to \EMet\) is isomorphic to the terminal object \(*\in \EMet\).
\end{example}

\begin{lemma}\label{lem: univ adj PEMet}
  The following assertions hold:
  \begin{assertion}
    \item \label{lem: univ adj PEMet ass: uset}
    For any object \(\mcX\in\PSh(\tbMet)\), the natural map \(u:\mcX(*)\to \uSet{\metRe(\mcX)}\) is bijective.
    Moreover, for any points \(x,x'\in \metRe(\mcX)\) and any \(\ep > 0\), there exist an integer \(N\in \N\), elements \(r_i\in \bRp\) (\(i\in \{0,\cdots,N\}\)), and morphisms \(f_i:h_{2_{r_i}}\to \mcX\) (\(i\in \{0,\cdots,N\}\)) such that the following conditions hold:
    \begin{condition}
      \item \label{lem: univ adj PEMet ass: uset condi: sum}
      \(\sum_{i=0}^Nr_i \leq d_{\metRe(\mcX)}(x,x') + \ep\).
      \item \label{lem: univ adj PEMet ass: uset condi: term}
      It holds that \(u(f_0(*)(\iota_0:*\to 2_{r_0})) = x\), and \(u(f_N(*)(\iota_{r_N}:*\to 2_{r_N})) = x'\).
      \item \label{lem: univ adj PEMet ass: uset condi: chain}
      For any \(i\in\{1,\cdots,N\}\), it holds that \(f_{i-1}(*)(\iota_{r_{i-1}}:*\to 2_{r_{i-1}})=f_i(*)(\iota_0:*\to 2_{r_i})\).
    \end{condition}
    \item \label{lem: univ adj PEMet ass: counit}
    The natural transformation \(\metRe\circ h \to \id_{\PEMet}\) arising from the adjoint pair \(\metRe:\PSh(\tbMet)\leftrightarrows \epMet:h\) is an isomorphism.
    \item \label{lem: univ adj PEMet ass: ff}
    The functor \(h: \PEMet\to \PSh(\tbMet)\) is fully faithful; that is, for any extended pseudometric spaces \(X,Y\in\PEMet\) and any morphism \(\varphi: \Hom_{\PEMet}(-, X)\to \Hom_{\PEMet}(-, Y)\) in \(\PSh(\tbMet)\), there exists a unique 1-Lipschitz map \(f:X\to Y\) such that \(\varphi = f\circ (-)\).
  \end{assertion}
\end{lemma}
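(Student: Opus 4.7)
The plan is to prove the three assertions in order, with most of the technical work concentrated in \cref{lem: univ adj PEMet ass: uset}; the remaining two assertions then follow almost formally---\cref{lem: univ adj PEMet ass: counit} by specializing \cref{lem: univ adj PEMet ass: uset} to $\mcX = h_X$ and invoking the Yoneda lemma to identify chains, and \cref{lem: univ adj PEMet ass: ff} by the general fact that a right adjoint is fully faithful if and only if the counit is an isomorphism.

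For \cref{lem: univ adj PEMet ass: uset}, I would start from the colimit presentation $\mcX \cong \colim_{(X\to\mcX)\in\tbMet_{/\mcX}} h_X$ recalled after \cref{def: univ adj}. Evaluating at $*$ gives $\mcX(*) \cong \colim \uSet{X}$ in $\sfSet$. On the other hand, $\metRe(\mcX) \cong \colim X$ in $\PEMet$, and the functor $\uSet{(-)}$ commutes with colimits by \cref{rmk-uSet-lr-adj}, so $\uSet{\metRe(\mcX)}$ is the same colimit of sets; bijectivity of $u$ follows. For the distance, I would feed the chain formula of \cref{lem: lim and colim in PEMet ass: colim} into this diagram: given $x,x'\in\metRe(\mcX)$ and $\ep > 0$, one extracts a chain of objects $(X_{i_k}\to\mcX)\in\tbMet_{/\mcX}$ and points $x_{i_k}, x_{i_k}'\in X_{i_k}$ with total length at most $d_{\metRe(\mcX)}(x,x') + \ep$. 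Setting $r_k \dfn d_{X_{i_k}}(x_{i_k}, x_{i_k}')$, the isometric embedding $2_{r_k}\to X_{i_k}$ sending $0\mapsto x_{i_k}$ and $r_k\mapsto x_{i_k}'$ composed with $h_{X_{i_k}}\to\mcX$ yields the desired morphism $f_k: h_{2_{r_k}}\to\mcX$ (degenerate edges with $r_k = 0$ can simply be dropped from the chain since $X_{i_k}$ is a metric space). Conditions \cref{lem: univ adj PEMet ass: uset condi: sum,lem: univ adj PEMet ass: uset condi: term,lem: univ adj PEMet ass: uset condi: chain} then translate directly from the colimit data via $u$ and Yoneda.

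For \cref{lem: univ adj PEMet ass: counit}, apply \cref{lem: univ adj PEMet ass: uset} to $\mcX = h_X$. The underlying-set statement gives $u: \uSet{X} = h_X(*) \to \uSet{\metRe(h_X)}$ bijective. For the distance, one direction is elementary: when $d_X(x,x') < \infty$, the 1-Lipschitz map $2_r\to X$ sending $0\mapsto x$ and $r\mapsto x'$ exists for every $r \geq d_X(x,x')$, yielding a single-edge chain of length $r$ and hence $d_{\metRe(h_X)}(x,x')\leq d_X(x,x')$; the case $d_X(x,x') = \infty$ is trivial. Conversely, any chain $f_k: h_{2_{r_k}}\to h_X$ corresponds, by the Yoneda lemma on $\tbMet$ (noting $2_{r_k}\in\tbMet$ since $r_k\in\Rp$), to a 1-Lipschitz map $2_{r_k}\to X$, i.e., points $y_k, y_k'\in X$ with $d_X(y_k, y_k')\leq r_k$; condition \cref{lem: univ adj PEMet ass: uset condi: chain} forces $y_{k-1}' = y_k$, and the triangle inequality yields $d_X(x,x')\leq\sum r_k$, hence $d_X(x,x')\leq d_{\metRe(h_X)}(x,x') + \ep$ for every $\ep > 0$.

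\Cref{lem: univ adj PEMet ass: ff} is then immediate from the adjunction $\metRe \dashv h$ together with the counit being an isomorphism, as established in \cref{lem: univ adj PEMet ass: counit}. The main obstacle is therefore located in \cref{lem: univ adj PEMet ass: uset}: specifically, the careful translation between the formal colimit-chain data from \cref{lem: lim and colim in PEMet ass: colim} and the morphism-chain data $(h_{2_{r_k}}\to\mcX)_k$, ensuring via $u$ and the Yoneda lemma that the endpoint conditions \cref{lem: univ adj PEMet ass: uset condi: term,lem: univ adj PEMet ass: uset condi: chain} are matched on the nose rather than merely up to the equivalence relation defining the colimit.
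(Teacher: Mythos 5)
Your proposal is correct and follows essentially the same route as the paper: assertion (1) via the colimit presentation \(\mcX \cong \colim_{\tbMet_{/\mcX}} h_X\) combined with the chain formula of \cref{lem: lim and colim in PEMet} \ref{lem: lim and colim in PEMet ass: colim}, then (2) by specializing to \(h_X\), and (3) from the standard counit criterion for full faithfulness of a right adjoint. The only micro-detail to watch is the case where every edge of the extracted chain is degenerate (e.g.\ \(x=x'\)), since the statement still demands at least one morphism \(f_0\); a single padding edge of length \(\ep\) mapping both points of \(2_\ep\) to a common representative fixes this trivially.
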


\begin{proof}
  \Cref{lem: univ adj PEMet ass: uset} follows immediately from \cref{lem: lim and colim in PEMet} \ref{lem: lim and colim in PEMet ass: colim}, together with the fact that the natural morphism
  \[\colim(\tbMet/\mcX \to \tbMet \xinj{i} \PEMet) \to \metRe(\mcX)\]
  is an isomorphism.
  \Cref{lem: univ adj PEMet ass: counit} follows immediately from \cref{lem: univ adj PEMet ass: uset}.
  \Cref{lem: univ adj PEMet ass: ff} follows immediately from \Cref{lem: univ adj PEMet ass: counit}, together with a well-known result from elementary category theory \confer{\cite[{\href{https://alg-d.com/math/kan_extension/adjoint.pdf}{\JPstr{随伴関手}}, \JPstr{定理 18} (6)}]{alg-d}}.
  % Next, we prove \cref{lem: univ adj PEMet ass: ff}.
  % By \cref{lem: univ adj PEMet ass: counit}, the functor \(h\) is faithful.
  % Hence, to prove \cref{lem: univ adj PEMet ass: ff}, it suffices to prove that \(h\) is full.
  % Let \(X,Y\in \PEMet\) be objects and \(\varphi: \Hom_{\PEMet}(-, X)\to \Hom_{\PEMet}(-, Y)\) a morphism in \(\PSh(\tbMet)\).
  % Write \(f: X\to Y\) for the morphism in \(\PEMet\) determined by \(\metRe\) and the natural isomorphism \(\metRe\circ h\tosim \id_{\PEMet}\).
  % Then, by \cref{lem: lim and colim in PEMet} \ref{lem: lim and colim in PEMet ass: colim} and \cref{lem: univ adj PEMet ass: uset,lem: univ adj PEMet ass: counit}, the following diagram commutes:
  % \[\begin{tikzcd}
  %   {\Hom_{\PEMet}(*, X)} \ar[r, "{\varphi_*}"] \ar[d, "{\rotatebox{90}{\(\sim\)}}"] &
  %   {\Hom_{\PEMet}(*, Y)} \ar[d, "{\rotatebox{90}{\(\sim\)}}"] \\
  %   {\uSet{X}} \ar[r, "{\uSet{f}}"] & {\uSet{Y}}.
  % \end{tikzcd}\]
  % Let \(K\in \tbMet\) be an object.
  % For any element \(k\in K\), write \([k]:*\to K\) for the morphism in \(\tbMet\) determined by \(k\in K\).
  % Then, the following diagram commutes:
  % \[\begin{tikzcd}
  %   {\Hom_{\PEMet}(K, X)} \ar[d, "{\varphi_K}"] \ar[r, hook] &
  %   {\prod_{k\in K}\Hom_{\PEMet}(*, X)} \ar[d, "{\prod_{k\in K}\varphi_*}"] \ar[r, "{\sim}"] &
  %   {\prod_{k\in K}\uSet{X}} \ar[d, "{\prod_{k\in K}\uSet{f}}"] \\
  %   {\Hom_{\PEMet}(K, Y)} \ar[r, hook] &
  %   {\prod_{k\in K}\Hom_{\PEMet}(*, Y)} \ar[r, "{\sim}"] &
  %   {\prod_{k\in K}\uSet{Y}}.
  % \end{tikzcd}\]
  % This implies that \(\varphi_K = f\circ (-) = h(f)_K\).
  This completes the proof of \cref{lem: univ adj PEMet}.
\end{proof}

\begin{notation}\label{notation: PEmet subset PSh}
  \
  \begin{enumerate}
    \item \label{notation: PEmet subset PSh enumi: full sub}
    By a slight abuse of notation, we regard \(\PEMet\) as a full subcategory of \(\PSh(\tbMet)\) via the fully faithful embedding \(\PEMet\inj\PSh(\tbMet)\) induced by the left Kan extension of the Yoneda embedding \(\tbMet\to \PSh(\tbMet)\) along the inclusion functor \(\tbMet\inj \PEMet\) (cf. \cref{lem: univ adj PEMet} \ref{lem: univ adj PEMet ass: ff}).
    Thus, for any object \(X\in \PEMet\), we identify \(X\) with the presheaf \(\Hom_{\PEMet}(-,X)\restriction_{\tbMet}:\tbMet^{\op}\subset \PEMet\to \sfSet\).
    \item \label{notation: PEmet subset PSh enumi: uSet}
    By a slight abuse of notation, for any object \(\mcX\in\PSh(\tbMet)\), we identify the underlying set of \(\metRe(\mcX)\) with the set \(\mcX(*)\) \confer{\cref{lem: univ adj PEMet} \ref{lem: univ adj PEMet ass: uset}}.
    \item \label{notation: PEmet subset PSh enumi: metRe}
    For any object \(\mcX\in\PSh(\tbMet)\), we write \(\metRe_{\mcX}:\mcX\to \metRe(\mcX)\) for the natural morphism arising from the adjoint pair \((\metRe,\PEMet\subset\PSh(\tbMet))\).
  \end{enumerate}
\end{notation}

\begin{definition}\label{def: repble}
  Let \(B\in \epMet\) be an object.
  We say that a presheaf \(\mcF\in\PSh(\tbMet_{/B})\) on \(\tbMet_{/B}\) is \textbf{representable by an extended pseudometric space} if there exists an object \(X\in\epMet_{/B}\) together with an isomorphism \(\mcF\cong X\) in \(\PSh(\tbMet_{/B})\). 
\end{definition}

\begin{remark}
  Since any surjective morphism is an epimorphism in \(\PEMet\) \confer{\cref{cor: epi mono in PEMet} \ref{cor: epi mono in PEMet ass: epi}}, there exists an epimorphism \(f\) that is not a coequalizer of its kernel pair.
  Thus, by Giraud's theorem \confer{\cite[Theorem 0.45]{Johnstone}}, \(\PEMet\) is not a Grothendieck topos.
  In particular, \(\metRe\) does not preserve finite limits.
\end{remark}

\section{The Hausdorff Distance}
\label{section: Hfib}

In this section, we study basic properties of morphisms of metric spaces, called \textbf{\Hfibs}, from the viewpoint of metric geometry, and establish the foundational results needed in this paper.
Submetries have been used effectively in differential geometry to study Riemannian submersions; from the perspective of metric geometry, however, it is natural to regard them as analogues of flat families in algebraic geometry.

We also show that the functor parametrizing compact subspaces of a given metric space is representable by the space of compact subsets equipped with the Hausdorff distance.
This result can be viewed as a metric-geometric analogue of the representability of the Hilbert functor in algebraic geometry.

\begin{definition}\label{dfn-Hfib}
  \
  \begin{enumerate}
    \item \label{dfn-Hfib-Hdist}
    Let \(X\in\epMet\) be an object.
    For any subsets \(F_0, F_1\subset X\), we write
    \[\dH_X(F_0, F_1) \dfn \max\left\{\sup_{x_0\in F_0}\inf_{x_1\in F_1}d(x_0,x_1), \sup_{x_1\in F_1} \inf_{x_0\in F_0}d(x_0,x_1)\right\} \in \R_{\geq 0}\cup \{\infty\}.\]
    We say that \(\dH_X\) is the \textbf{Hausdorff distance} on \(X\).
    \item \label{dfn-Hfib-Hfib}
    Let \(f:X\to B\) be a morphism in \(\epMet\).
    We say that \(f\) is a \textbf{\Hfib} if \(f\) is surjective, and, moreover, for any points \(b_0, b_1\in B\), it holds that \(\dH_X(f^{-1}(b_0), f^{-1}(b_1)) \leq d_B(b_0, b_1) < \infty\).
  \end{enumerate}
\end{definition}

\begin{remark}\label{rmk-Hfib}
  \
  \begin{enumerate}
    \item \label{rmk-Hfib-dist}
    Let \(X\in\epMet\) be an object.
    Then, for any subset \(F\subset X\), we have \(\dH_X(F,\overline{F}) = 0\).
    In particular, \(\dH_X\) is not a metric on the set of subsets of \(X\).
    However, if \(X\) is a metric space, then \(\dH_X\) gives a distance on the set of non-empty bounded closed subsets of \(X\) \confer{\cite[{Problem 4.5.23}]{Engelk}}.
    \item \label{rmk-Hfib-surj}
    Let \(f:X\to B\) be a morphism in \(\epMet\).
    For any metric space \(X\) and any subset \(F\subset X\), we have \(\dH_X(\emptyset, F) = \infty\).
    In particular, if \(B\in \pMet\) and \(f:X\to B\) satisfies that \(\dH_X(f^{-1}(b_0), f^{-1}(b_1)) \leq d_B(b_0, b_1)\) for every \(b_0, b_1\in B\), then \(f\) is surjective.
  \end{enumerate}
\end{remark}

\begin{notation}
  Let \(X\) be an extended pseudometric space, \(x\in X\) a point, and \(r\in\bR\).
  We define
  \begin{align*}
    \ball_X(x,r) &\dfn \left\{ x'\in X \,\middle|\, d_X(x,x') < r \right\}, \ \text{and} \\
    \clball_X(x,r) &\dfn \left\{ x'\in X \,\middle|\, d_X(x,x') \leq r \right\}.
  \end{align*}
  In particular, \(\ball_X(x,r) = \emptyset\) for all \(r \leq 0\) and \(x\in X\), and \(\clball_X(x, \infty) = X\) for all \(x\in X\).
  Moreover, \(X\) is an extended metric space if and only if \(\clball_X(x, 0) = \{x\}\) for every \(x\in X\).
  Note that \(X\) is a pseudometric space if and only if \(\ball_X(x, \infty) = X\). 
  We call \(\ball_X(x,r)\) the open ball of radius \(r\) centered at \(x\).
\end{notation}

\begin{lemma}\label{prop-Hfib-char}
  Let \(f:X\to B\) be a morphism in \(\epMet\) and \(b_0,b_1\in B\) points.
  Then, the following assertions are equivalent:
  \begin{assertion}
    \item \label{prop-Hfib-char-Hfib}
    \(\dH_X(f^{-1}(b_0), f^{-1}(b_1)) = d_B(b_0, b_1)\).
    \item \label{prop-Hfib-char-inf}
    For any points \(x_0\in f^{-1}(b_0)\) and \(x_1\in f^{-1}(b_1)\), 
    \[\inf_{\tilde{x}_1\in f^{-1}(b_1)}d_X(x_0, \tilde{x}_1) = \inf_{\tilde{x}_0\in f^{-1}(b_0)}d_X(\tilde{x}_0, x_1) = d_B(b_0,b_1).\]
    \item \label{prop-Hfib-char-ineq}
    For any positive real number \(\ep>0\) and 
    any point \(x_0\in f^{-1}(b_0)\), 
    there exists a point \(x_1\in f^{-1}(b_1)\) such that
    \(d_X(x_0, x_1) \leq d_B(f(x_0), b_1) + \ep\).
    \item \label{prop-Hfib-char-lift}
    For any positive real number \(\ep > 0\) and any commutative diagram
    \[\begin{tikzcd}
      {*} \ar[r, "x"] \ar[d, "{\iota_0}"'] & {X} \ar[d, "f"] \\
      {2_{\ep + d_B(b_0, b_1)}} \ar[r, "g"] & {B}
    \end{tikzcd}\]
    in \(\epMet\) such that \(g(0) = b_0\) and \(g(r) = b_1\), there exists a morphism \(h:2_{\ep + d_B(b_0, b_1)}\to X\) in \(\epMet\) such that \(x = h \circ \iota_0\) and \(g = f\circ h\).
  \end{assertion}
\end{lemma}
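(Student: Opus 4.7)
The key structural fact is that $f$ being $1$-Lipschitz forces $d_X(x_0,x_1)\geq d_B(b_0,b_1)$ for every $x_i\in f^{-1}(b_i)$, so each infimum appearing in \cref{prop-Hfib-char-inf} and each sup-inf inside $\dH_X(f^{-1}(b_0),f^{-1}(b_1))$ carries an automatic lower bound of $d_B(b_0,b_1)$. Thus the only nontrivial content of all four conditions is the existence of approximate minimizers matching this bound up to $\ep$. My plan is to establish the equivalences via the cycle \cref{prop-Hfib-char-Hfib} $\Rightarrow$ \cref{prop-Hfib-char-inf} $\Rightarrow$ \cref{prop-Hfib-char-ineq} $\Leftrightarrow$ \cref{prop-Hfib-char-lift} $\Rightarrow$ \cref{prop-Hfib-char-inf} $\Rightarrow$ \cref{prop-Hfib-char-Hfib}.

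For \cref{prop-Hfib-char-Hfib} $\Rightarrow$ \cref{prop-Hfib-char-inf}, the hypothesis supplies $\sup_{x_0}\inf_{\tilde x_1}d_X(x_0,\tilde x_1)\leq d_B(b_0,b_1)$ and the symmetric inequality, and the Lipschitz lower bound then pins each inner infimum down to equality. The implication \cref{prop-Hfib-char-inf} $\Rightarrow$ \cref{prop-Hfib-char-ineq} is immediate from the definition of infimum together with the identity $d_B(f(x_0),b_1)=d_B(b_0,b_1)$.

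For \cref{prop-Hfib-char-ineq} $\Leftrightarrow$ \cref{prop-Hfib-char-lift}, setting $r\dfn\ep+d_B(b_0,b_1)$, the two formulations are different languages for the same data: a commutative square of the form in \cref{prop-Hfib-char-lift} encodes the choice $x_0\dfn x(0)\in f^{-1}(b_0)$ (by commutativity of the square), and a lift $h\colon 2_r\to X$ is exactly the specification of $x_1\dfn h(r)\in f^{-1}(b_1)$. The only nontrivial verification is that the two-point map $h(0)=x_0,h(r)=x_1$ is $1$-Lipschitz precisely when $d_X(x_0,x_1)\leq r=d_{2_r}(0,r)$, which matches the inequality in \cref{prop-Hfib-char-ineq}. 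Conversely, given $x_0\in f^{-1}(b_0)$ and $\ep>0$, one builds the canonical diagram with $g\colon 2_r\to B$ defined by $0\mapsto b_0,\ r\mapsto b_1$ (a $1$-Lipschitz map since $d_B(b_0,b_1)\leq r$) and reads off the desired $x_1$ from a lift produced by \cref{prop-Hfib-char-lift}.

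Closing the cycle \cref{prop-Hfib-char-ineq} $\Rightarrow$ \cref{prop-Hfib-char-inf}: applying \cref{prop-Hfib-char-ineq} to each $x_0\in f^{-1}(b_0)$ and letting $\ep\to 0$ yields $\inf_{\tilde x_1\in f^{-1}(b_1)}d_X(x_0,\tilde x_1)\leq d_B(b_0,b_1)$, which combined with the Lipschitz lower bound forces equality. The matching identity for $x_1\in f^{-1}(b_1)$ required by \cref{prop-Hfib-char-inf}, namely $\inf_{\tilde x_0}d_X(\tilde x_0,x_1)=d_B(b_0,b_1)$, is obtained by the analogous argument using \cref{prop-Hfib-char-ineq} with the roles of $b_0$ and $b_1$ interchanged. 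I anticipate this last point---recovering both sides of the symmetric statement \cref{prop-Hfib-char-inf} from the ostensibly directional \cref{prop-Hfib-char-ineq}---to be the main structural subtlety of the proof, since care is needed to see that \cref{prop-Hfib-char-ineq} legitimately applies in both orderings of the pair $(b_0,b_1)$; the remaining implications reduce to routine manipulations of the Lipschitz lower bound and the definition of infimum.
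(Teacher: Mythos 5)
Your decomposition is essentially the paper's: the equivalence \ref{prop-Hfib-char-Hfib}\(\Leftrightarrow\)\ref{prop-Hfib-char-inf} rests on the automatic lower bound \(d_X(x_0,x_1)\geq d_B(b_0,b_1)\) coming from 1-Lipschitzness, \ref{prop-Hfib-char-inf}\(\Rightarrow\)\ref{prop-Hfib-char-ineq} is the definition of the infimum, and \ref{prop-Hfib-char-ineq}\(\Leftrightarrow\)\ref{prop-Hfib-char-lift} is the observation that a 1-Lipschitz map \(h:2_{\ep+d_B(b_0,b_1)}\to X\) lifting \(g\) and extending \(x\) is exactly the choice of a point \(x_1\in f^{-1}(b_1)\) with \(d_X(x_0,x_1)\leq d_B(b_0,b_1)+\ep\). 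Those parts are correct and coincide with what the paper does.

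The gap is the step you yourself single out, and it cannot be closed by more care: to get the second identity in \ref{prop-Hfib-char-inf} you propose to ``apply \ref{prop-Hfib-char-ineq} with the roles of \(b_0\) and \(b_1\) interchanged,'' but \ref{prop-Hfib-char-ineq} is a hypothesis about the ordered pair \((b_0,b_1)\) only; the swapped statement is not among your assumptions and does not follow from them. Concretely, let \(B\dfn 2_1\) with \(b_0=0\), \(b_1=1\), let \(X\dfn\{p,q,q'\}\) with \(d_X(p,q)=1\), \(d_X(p,q')=d_X(q,q')=2\), and let \(f(p)=b_0\), \(f(q)=f(q')=b_1\). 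Then \(f\) is 1-Lipschitz and conditions \ref{prop-Hfib-char-ineq} and \ref{prop-Hfib-char-lift} hold for this pair (always lift to \(q\)), yet \(\inf_{\tilde{x}_0\in f^{-1}(b_0)}d_X(\tilde{x}_0,q')=2\) and \(\dH_X(f^{-1}(b_0),f^{-1}(b_1))=2\neq 1=d_B(b_0,b_1)\), so \ref{prop-Hfib-char-inf} and \ref{prop-Hfib-char-Hfib} fail. Hence, for a fixed ordered pair, \ref{prop-Hfib-char-ineq} and \ref{prop-Hfib-char-lift} are strictly weaker than \ref{prop-Hfib-char-Hfib} and \ref{prop-Hfib-char-inf}; the asserted equivalence is valid only if \ref{prop-Hfib-char-ineq}/\ref{prop-Hfib-char-lift} are read as imposed for both orderings of \((b_0,b_1)\) (equivalently, with \(b_0,b_1\) universally quantified, which is how \cref{prop-Hfib-char} is actually used in the paper, e.g.\ for submetries as in \cref{dfn-Hfib} and in \cref{prop-Hfib-char-ball}). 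For comparison, the paper's own proof does not repair this point either: it writes out only the inequality chain behind \ref{prop-Hfib-char-Hfib}\(\Leftrightarrow\)\ref{prop-Hfib-char-inf} and declares the remaining equivalences immediate (even invoking surjectivity of \(f\), which is not a hypothesis of the lemma). So your instinct that this is the one delicate point is right, but the remedy is to adjust the reading of the statement, not to find a cleverer way to perform the swap.
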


\begin{proof}
  First, we prove the equivalence ``\ref{prop-Hfib-char-Hfib}\(\Leftrightarrow\)\ref{prop-Hfib-char-inf}''.
  For any points \(x_0\in f^{-1}(b_0)\) and \(x_1\in f^{-1}(b_1)\), we have
  \begin{align*}
    \dH_X(f^{-1}(b_0), f^{-1}(b_1))
    &= \max\left\{\sup_{\tilde{x}_0\in f^{-1}(b_0)}\inf_{\tilde{x}_1\in f^{-1}(b_1)}d_X(\tilde{x}_0, \tilde{x}_1), \sup_{\tilde{x}_1\in f^{-1}(b_1)}\inf_{\tilde{x}_0\in f^{-1}(b_0)}d_X(\tilde{x}_0, \tilde{x}_1)\right\} \\
    &\geq \min\left\{\inf_{\tilde{x}_1\in f^{-1}(b_1)}d_X(x_0, \tilde{x}_1), \inf_{\tilde{x}_0\in f^{-1}(b_0)}d_X(\tilde{x}_0, x_1)\right\} \\
    &\!\underset{\scriptscriptstyle\text{\hypertarget{prop-Hfib-char-Hfib-ineq}{(\(\bigstar\))}}}{\geq}\!
    \min\left\{\inf_{\tilde{x}_1\in f^{-1}(b_1)}d_B(f(x_0), f(\tilde{x}_1)), \inf_{\tilde{x}_0\in f^{-1}(b_0)}d_B(f(\tilde{x}_0), f(x_1))\right\} \\
    &= d_B(b_0, b_1),
  \end{align*}
  where the inequality \hyperlink{prop-Hfib-char-Hfib-ineq}{(\(\bigstar\))} follows from the definition of a morphism in \(\epMet\).
  This shows that ``\ref{prop-Hfib-char-Hfib}\(\Leftrightarrow\)\ref{prop-Hfib-char-inf}''.
  The equivalence ``\ref{prop-Hfib-char-inf}\(\Leftrightarrow\)\ref{prop-Hfib-char-ineq}'' follows immediately from the definition of the infimum and the surjectivity of \(f\).
  The equivalence ``\ref{prop-Hfib-char-ineq}\(\Leftrightarrow\)\ref{prop-Hfib-char-lift}'' follows immediately from the definition of a morphism in \(\epMet\). 
  This completes the proof of \cref{prop-Hfib-char}.
\end{proof}

\begin{lemma}\label{prop-Hfib-char-ball}
  Let \(f:X\to B\) be a surjective morphism in \(\epMet\).
  Then, \(f\) is a \Hfib if and only if for any point \(x\in X\) and any positive real number \(r > 0\), it holds that \(\ball_B(f(x),r) = f(\ball_X(x,r))\).
\end{lemma}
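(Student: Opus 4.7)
The plan is to prove both implications by leveraging the lifting criterion from \cref{prop-Hfib-char}.

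For the forward direction, assume $f$ is a \Hfib. The inclusion $f(\ball_X(x,r))\subseteq \ball_B(f(x),r)$ is immediate from the 1-Lipschitz property of $f$: any $x'\in \ball_X(x,r)$ satisfies $d_B(f(x),f(x'))\leq d_X(x,x')<r$. For the reverse inclusion, given $b\in \ball_B(f(x),r)$, I set $\ep \dfn (r-d_B(f(x),b))/2 > 0$ and apply \cref{prop-Hfib-char} \ref{prop-Hfib-char-ineq}. The \Hfib condition $\dH_X(f^{-1}(f(x)), f^{-1}(b))\leq d_B(f(x),b)$ combines with the automatic reverse inequality for any surjective 1-Lipschitz morphism (as derived inside the proof of \cref{prop-Hfib-char}) to give the equality \ref{prop-Hfib-char-Hfib}, so \ref{prop-Hfib-char-ineq} applies and yields a point $x'\in f^{-1}(b)$ with $d_X(x,x')\leq d_B(f(x),b)+\ep < r$. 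Hence $b = f(x') \in f(\ball_X(x,r))$.

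For the backward direction, assume $f$ is surjective and the ball equality holds for every $x$ and every positive real $r$. Fix $b_0, b_1\in B$ and $x_0\in f^{-1}(b_0)$. For each $\ep > 0$ making $r \dfn d_B(b_0, b_1) + \ep$ a finite positive real, the hypothesis forces $b_1 \in \ball_B(f(x_0), r) = f(\ball_X(x_0, r))$, providing $x_1 \in f^{-1}(b_1)$ with $d_X(x_0, x_1) < r$. Letting $\ep \to 0$ yields $\inf_{\tilde{x}_1\in f^{-1}(b_1)} d_X(x_0, \tilde{x}_1) \leq d_B(b_0, b_1)$; symmetrizing in the roles of $b_0$ and $b_1$ gives the full bound $\dH_X(f^{-1}(b_0), f^{-1}(b_1))\leq d_B(b_0, b_1)$, which combined with surjectivity verifies the \Hfib condition.

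The main delicate point is reconciling the ball equality---which only directly constrains finite radii---with the finiteness assertion $d_B(b_0,b_1)<\infty$ built into the definition of a \Hfib. In the backward direction one must either treat this finiteness as an implicit part of the hypothesis or derive it from additional ambient constraints on $B$. Once this point is addressed, the rest of the argument is essentially a routine translation between the pointwise lifting criterion of \cref{prop-Hfib-char} \ref{prop-Hfib-char-ineq} and the set-level ball equality.
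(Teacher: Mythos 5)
Your argument is correct and takes essentially the same route as the paper: necessity is proved exactly as in the paper via \cref{prop-Hfib-char} ``\ref{prop-Hfib-char-Hfib}\(\Leftrightarrow\)\ref{prop-Hfib-char-ineq}'' with a small enough \(\ep\), and your sufficiency argument (extracting, for each \(\ep>0\), a point of \(f^{-1}(b_1)\) in \(\ball_X(x_0, d_B(b_0,b_1)+\ep)\) and bounding \(\dH_X\) directly) is just the unpacked form of the paper's appeal to the lifting criterion \ref{prop-Hfib-char-lift}. The finiteness issue you flag (the clause \(d_B(b_0,b_1)<\infty\) in \cref{dfn-Hfib}) is a genuine edge case for spaces with infinite distances, but the paper's own one-line sufficiency argument is equally silent about it, so your treatment is no weaker than the original.
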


\begin{proof}
  First, we prove necessity.
  Assume that \(f\) is a \Hfib.
  Let \(x\in X\) be a point and \(r > 0\) a positive real number.
  Since \(f\) is a morphism in \(\epMet\), it holds that \(f(\ball_X(x,r))\subset \ball_B(f(x),r)\).
  Let \(b\in \ball_B(f(x), r)\) be a point.
  Then, there exists a positive real number \(\ep > 0\) such that \(d_B(f(x), b) + \ep < r\).
  Hence, by \cref{prop-Hfib-char} ``\ref{prop-Hfib-char-Hfib}\(\Leftrightarrow\)\ref{prop-Hfib-char-ineq}'', there exists a point \(x'\in f^{-1}(b)\) such that \(d_X(x,x') \leq d_B(f(x),b) + \ep < r\).
  In particular, \(x'\in \ball_X(x,r)\).
  Thus, it holds that \(\ball_B(f(x),r)\subset f(\ball_X(x,r))\).
  This completes the proof of necessity.
  Sufficiency follows immediately from \cref{prop-Hfib-char} ``\ref{prop-Hfib-char-Hfib}\(\Leftrightarrow\)\ref{prop-Hfib-char-lift}'', together with the fact that the existence of the lifting \(h:2_r\to X\) is equivalent to the condition that \(\ball_X(x, r) \cap f^{-1}(g(r))\neq \emptyset\).
  This completes the proof of \cref{prop-Hfib-char-ball}.
\end{proof}

\begin{corollary}\label{prop-Hfib-basic}
  Let \(f:X\to B\) be a morphism in \(\epMet\).
  Then, the following assertions hold:
  \begin{assertion}
    \item \label{prop-Hfib-basic-open}
    If \(f\) is a \Hfib, then \(f\) is an open map.
    \item \label{prop-Hfib-basic-comp}
    Let \(g:Y\to X\) be a morphism in \(\epMet\).
    Assume that \(g\) is a \Hfib.
    Then, \(f\) is a \Hfib if and only if \(f\circ g\) is a \Hfib.
    \item \label{prop-Hfib-basic-dense-imt}
    Let \(p:U\to X\) be a morphism in \(\epMet\) whose image \(\im(p)\subset X\) is dense.
    Assume that \(f\circ p\) is a \Hfib.
    Then, \(f\) is a \Hfib.
    \item \label{prop-Hfib-basic-bc}
    Let \(B'\to B\) be a morphism in \(\epMet\).
    Assume that \(f\) is a \Hfib.
    Then, the natural projection \(X\times_B B'\to B'\) is a \Hfib.
  \end{assertion}
\end{corollary}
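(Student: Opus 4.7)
The plan is to derive all four assertions from the open-ball characterization \cref{prop-Hfib-char-ball} and the $\ep$-lifting criterion \cref{prop-Hfib-char}~\ref{prop-Hfib-char-ineq}. Assertion \ref{prop-Hfib-basic-open} is immediate: any open $U\subset X$ is a union of open balls $\ball_X(x,r)$, and by \cref{prop-Hfib-char-ball} each image $f(\ball_X(x,r)) = \ball_B(f(x),r)$ is open in $B$, so $f(U)$ is open. For assertion \ref{prop-Hfib-basic-comp}, surjectivity of $f$ follows in either direction from that of $g$ and $f\circ g$. For the ball condition, fix $y\in Y$ and write $x = g(y)$; since $g$ is a \Hfib, $g(\ball_Y(y,r)) = \ball_X(x,r)$, whence $(f\circ g)(\ball_Y(y,r)) = f(\ball_X(x,r))$. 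Surjectivity of $g$ then allows one to quantify over $x\in X$ equivalently through $y\in Y$, and \cref{prop-Hfib-char-ball} supplies both implications.

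For assertion \ref{prop-Hfib-basic-dense-imt}, surjectivity of $f$ is immediate from that of $f\circ p$. To verify \cref{prop-Hfib-char}~\ref{prop-Hfib-char-ineq} for $f$, I would fix $x_0\in X$, $b_1\in B$, and $\ep > 0$; using density of $\im(p)\subset X$, choose $u\in U$ with $d_X(p(u),x_0) < \ep/3$, so that the $1$-Lipschitz property gives $d_B(f(p(u)),b_1) \leq d_B(f(x_0),b_1) + \ep/3$. Applying the lifting criterion to the \Hfib $f\circ p$ at $u$ with target $b_1$ and tolerance $\ep/3$ yields $u'\in(f\circ p)^{-1}(b_1)$ with $d_U(u,u')\leq d_B(f(p(u)),b_1) + \ep/3$, and then $x_1 \dfn p(u')\in f^{-1}(b_1)$ satisfies $d_X(x_0,x_1) \leq d_B(f(x_0),b_1) + \ep$ by the triangle inequality.

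For assertion \ref{prop-Hfib-basic-bc}, the explicit description of limits in \cref{lem: lim and colim in PEMet}~\ref{lem: lim and colim in PEMet ass: lim} identifies the underlying set of $X\times_B B'$ with $\{(x,b') : f(x) = \varphi(b')\}$ equipped with distance $\max\{d_X, d_{B'}\}$, where $\varphi:B'\to B$ is the structure map. Surjectivity of the second projection is immediate from that of $f$, and given $(x_0,b_0')$ in the pullback, $b_1'\in B'$ and $\ep > 0$, applying \cref{prop-Hfib-char}~\ref{prop-Hfib-char-ineq} to $f$ at $x_0$ aimed at $\varphi(b_1')$ produces $x_1\in f^{-1}(\varphi(b_1'))$ with $d_X(x_0,x_1) \leq d_B(\varphi(b_0'),\varphi(b_1')) + \ep \leq d_{B'}(b_0',b_1') + \ep$, whence $(x_1,b_1')$ is the desired lift in the max-distance. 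The only step requiring genuine care is \ref{prop-Hfib-basic-dense-imt}, where the tolerance $\ep$ must be split between the density approximation and the submetry lift for $f\circ p$; the other three assertions are routine once the ball characterization and the explicit pullback formula are in hand.
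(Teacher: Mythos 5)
Your proposal is correct and follows essentially the same route as the paper: assertions \ref{prop-Hfib-basic-open} and \ref{prop-Hfib-basic-comp} from the ball characterization \cref{prop-Hfib-char-ball}, assertion \ref{prop-Hfib-basic-dense-imt} by the same \(\ep/3\) density-plus-lift argument via \cref{prop-Hfib-char}~\ref{prop-Hfib-char-ineq}, and assertion \ref{prop-Hfib-basic-bc} by applying the characterization of \Hfibs to the explicit max-distance pullback. No gaps beyond the bookkeeping you already carry out.
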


\begin{proof}
  \Cref{prop-Hfib-basic-open,prop-Hfib-basic-comp} follow immediately from \cref{prop-Hfib-char-ball}.

  Next, we prove \cref{prop-Hfib-basic-dense-imt}.
  Let \(x\in X\) be a point, \(b'\in B\) a point, and \(\ep > 0\) a positive real number.
  Since \(\im(p)\subset X\) is dense, there exists a point \(u\in U\) such that \(d_X(x, p(u)) < \ep/3\).
  Since \(f\circ p\) is a \Hfib, it follows from \cref{prop-Hfib-char} ``\ref{prop-Hfib-char-Hfib}\(\Leftrightarrow\)\ref{prop-Hfib-char-ineq}'' that there exists a point \(u'\in (f\circ p)^{-1}(b')\) such that
  \[d_U(u,u') \leq d_B(f(p(u)), f(p(u'))) + \ep/3.\]
  Then,
  \begin{align*}
    d_B(f(x), f(p(u'))) + \ep
    &> d_B(f(x), f(p(u'))) + d_X(x,p(u)) + 2\ep/3 \\
    &\geq d_B(f(x),f(p(u')))+d_B(f(x),f(p(u)))+2\ep/3 \\
    &\geq
    d_B(f(p(u)),f(p(u')))+2\ep/3  \\
    &\geq
    d_U(u,u')+\ep/3 \\
    &\geq
    d_X(p(u),p(u'))+\ep/3 \\
    &> d_X(p(u),p(u'))+d_X(x,p(u))
    \geq d_X(x,p(u')).
  \end{align*}
  Since \(p(u')\in f^{-1}(b')\), it follows from \cref{prop-Hfib-char} ``\ref{prop-Hfib-char-Hfib}\(\Leftrightarrow\)\ref{prop-Hfib-char-ineq}'' that \(f\) is a \Hfib.
  This completes the proof of \cref{prop-Hfib-basic-dense-imt}.

  \Cref{prop-Hfib-basic-bc} follows immediately from \cref{prop-Hfib-char} ``\ref{prop-Hfib-char-Hfib}\(\Leftrightarrow\)\ref{prop-Hfib-char-lift}''.
  This completes the proof of \cref{prop-Hfib-basic}.
\end{proof}

\begin{example}
  Let \(M\) be a complete Riemannian manifold, \(G\) a Lie group, \(\pi:P\to M\) a principal \(G\)-bundle, and \(\omega\) a connection on \(P\).
  Then, by the Hopf-Rinow theorem, for any point \(p\in P\) and any point \(x\in M\), there exists a minimizing geodesic \(\gamma:[0,1]\to M\) such that \(\gamma(0)=\pi(p)\) and \(\gamma(1)=x\).
  By lifting \(\gamma\) to \(P\) horizontally, we obtain a geodesic \(\tilde{\gamma}:[0,1]\to P\) (with respect to the bundle metric on \(P\) determined by the connection \(\omega\) and the Riemannian metrics on \(M\) and \(G\)) starting at \(p\).
  Write \(p'\dfn \tilde{\gamma}(1)\).
  Then, by \cite[Lemma 26.11 (1) (2)]{Michor}, it holds that \(\gamma = \pi\circ\tilde{\gamma}\) and that \(d_P(p, p') = d_M(\pi(p), x)\).
  Thus, it follows from \cref{prop-Hfib-char} ``\ref{prop-Hfib-char-ineq}\(\Rightarrow\)\ref{prop-Hfib-char-Hfib}'' that \(\pi\) is a \Hfib.
  Note that this result holds regardless of whether the curvature of \(\omega\) vanishes or not.
\end{example}

\begin{example}\label{example-X/G}
  Let \(G\) be a group and \((X, d_X)\) a metric space equipped with a (left) \(G\)-action.
  Write \(p:X\to X/G\) for the natural projection.
  Assume that the following conditions hold:
  \begin{condition}
    \item \label{example-X/G-Ginv}
    The distance \(d_X\) on \(X\) is \(G\)-invariant, i.e., for any \(g\in G\) and any \(x_1,x_2\in G\), it holds that \(d_X(x_1,x_2) = d_X(gx_1, gx_2)\).
    \item \label{example-X/G-cl-orb}
    The action \(G\act X\) has closed orbits, i.e., for any element \(x\in X\), the orbit \(G\cdot x\subset X\) is closed.
  \end{condition}
  Then, by \cref{example-X/G-Ginv}, for any \(x_1, x_2\in X\), it holds that
  \[\dH_X(G\cdot x_1, G\cdot x_2) = \inf_{g\in G}d_X(x_1, gx_2) \leq d_X(x_1, x_2).\]
  Hence, the Hausdorff distance \(\dH_X\) on \(X\) defines a distance on \(X/G\) such that the natural projection \(p:X\to X/G\) is a \Hfib.
\end{example}

\begin{example}
  Write \(p: \R\to \{\Q, \R\setminus\Q\}\) for the map defined as \(p(x)\dfn\) \(\Q\) if \(x\in \Q\) else \(\R\setminus \Q\). 
  The pseudodistance on \(\{\Q,\R\setminus\Q\}\) is defined as \(d(\Q,\R\setminus \Q)=0\). 
  Then, \(p\) is a \Hfib.
\end{example}

\begin{proposition}\label{prop-Hfib-completion}
  Let \(f:X\to B\) be a morphism in \(\Met\).
  Assume that \(f\) is a \Hfib.
  Then, the completion \(\hat{f}: \hat{X}\to \hat{B}\) of \(f\) is a \Hfib.
  In particular, \(\hat{f}\) is surjective.
\end{proposition}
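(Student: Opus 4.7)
The plan is to verify the lifting characterization \ref{prop-Hfib-char-ineq} of \cref{prop-Hfib-char} for the completed map $\hat f : \hat X \to \hat B$, and then to read off surjectivity from it. Since $f$ is $1$-Lipschitz and $\hat B$ is complete, $f$ extends uniquely to a $1$-Lipschitz map $\hat f : \hat X \to \hat B$. If $X = \emptyset$ then $B$, $\hat X$, $\hat B$ are all empty by surjectivity of $f$ and the statement is trivial, so I may assume $X \neq \emptyset$.

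The core step is to show: for every $x \in \hat X$ with $\hat f(x) = b_0$, every $b_1 \in \hat B$, and every $\ep > 0$, there exists $y \in \hat X$ with $\hat f(y) = b_1$ and $d_{\hat X}(x, y) \leq d_{\hat B}(b_0, b_1) + \ep$. To construct $y$, first approximate $x$ by a point $\tilde x \in X$ with $d_{\hat X}(x, \tilde x)$ small, put $\tilde b_0 \dfn f(\tilde x)$, and approximate $b_1$ by a sequence $(\tilde b_n)_{n \geq 1} \subset B$ with $d_{\hat B}(\tilde b_n, b_1)$ decaying geometrically. Setting $y_0 \dfn \tilde x \in f^{-1}(\tilde b_0)$, I inductively apply \cref{prop-Hfib-char} \ref{prop-Hfib-char-ineq} to the submetry $f$ to choose $y_{n+1} \in f^{-1}(\tilde b_{n+1})$ satisfying $d_X(y_n, y_{n+1}) \leq d_B(\tilde b_n, \tilde b_{n+1}) + \ep_n$ for a summable sequence of tolerances $(\ep_n)$. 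A triangle-inequality computation bounds the first step by $d_X(y_0, y_1) \leq d_{\hat B}(b_0,b_1) + d_{\hat X}(x,\tilde x) + d_{\hat B}(b_1,\tilde b_1) + \ep_0$, while for $n \geq 1$ the terms $d_X(y_n, y_{n+1}) \leq d_{\hat B}(\tilde b_n, b_1) + d_{\hat B}(b_1, \tilde b_{n+1}) + \ep_n$ form a convergent series. Hence $(y_n)$ is Cauchy in $X$ and converges to some $y \in \hat X$; continuity of $\hat f$ yields $\hat f(y) = \lim \tilde b_n = b_1$, and summing the bounds together with $d_{\hat X}(x, \tilde x)$ gives $d_{\hat X}(x, y) \leq d_{\hat B}(b_0, b_1) + \ep$, provided the approximation parameters are chosen with sufficient slack.

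With the lifting property in hand, I deduce the statement as follows. The step just established is precisely \ref{prop-Hfib-char-ineq} of \cref{prop-Hfib-char} for $\hat f$, which by the equivalence with \ref{prop-Hfib-char-Hfib} yields $d^H_{\hat X}(\hat f^{-1}(b_0), \hat f^{-1}(b_1)) \leq d_{\hat B}(b_0, b_1)$ whenever both fibres of $\hat f$ are non-empty. Non-emptiness of every fibre (that is, surjectivity of $\hat f$) follows by fixing any $x_0 \in \hat X$, setting $b_0 \dfn \hat f(x_0)$, and applying the lifting step to an arbitrary $b_1 \in \hat B$. Together these establish that $\hat f$ is a \Hfib. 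The principal obstacle I expect is the arithmetic bookkeeping in the core step: the tolerance $\ep$ must be apportioned carefully among the initial approximation $d_{\hat X}(x, \tilde x)$, the tail $(\ep_n)$, and the decay rate of $d_{\hat B}(\tilde b_n, b_1)$, so that the first term $d_X(y_0, y_1)$---which through the triangle inequality carries the essential $d_{\hat B}(b_0, b_1)$ contribution---absorbs all subsequent error terms into $\ep$.
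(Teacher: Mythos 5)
Your proposal is correct and follows essentially the same route as the paper's proof: approximate \(x\) by a point of \(X\) and \(b_1\) by a rapidly converging sequence in \(B\), chain the lifting property of \cref{prop-Hfib-char}~\ref{prop-Hfib-char-ineq} along that sequence with summable tolerances, pass to the Cauchy limit in \(\hat X\), and bookkeep the errors into \(\ep\). The only differences (explicitly treating the empty case, deducing surjectivity directly from the lifting step rather than via \cref{rmk-Hfib}~\ref{rmk-Hfib-surj}, and noting \(\hat f(y)=b_1\) by continuity) are cosmetic.
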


\begin{proof}
  Let \(x\in \hat{X}\) be a point, \(b'\in\hat{B}\) a point, and \(\ep > 0\) a positive real number.
  Since \(X\subset \hat{X}\) is dense, there exists a point \(x_0\in X\) such that
  \begin{condition}
    \item \label{prop-Hfib-completion-ep/8}
    \(d_{\hat{X}}(x_0, x) < \ep/8\).
  \end{condition}
  Moreover, there exists a Cauchy sequence \((b_n')_{n\in \N}\) in \(B\) such that \(b' = \lim_{n\to\infty}b_n'\), and the following condition holds:
  \begin{condition}[start=2]
    \item \label{prop-Hfib-completion-s-ep}
    For any \(n\in \N\), it holds that \(\sum_{k\geq n}d_B(b_k',b_{k+1}') < \ep/2^{n+3}\).
  \end{condition}
  Since \(f\) is a \Hfib, it follows from \cref{prop-Hfib-char} ``\ref{prop-Hfib-char-Hfib}\(\Leftrightarrow\)\ref{prop-Hfib-char-ineq}'' that there exists a point \(x_0'\in f^{-1}(b_0')\) such that
  \begin{condition}[start=3]
    \item \label{prop-Hfib-completion-x0}
    \(d_X(x_0, x_0') < d_B(f(x_0), b_0') + \ep/4\).
  \end{condition}
  By induction on \(k\), it follows from \cref{prop-Hfib-char} ``\ref{prop-Hfib-char-Hfib}\(\Leftrightarrow\)\ref{prop-Hfib-char-ineq}'' that for any \(k\in \N\), there exists a point \(x_{k+1}'\in f^{-1}(b_{k+1}')\) such that
  \begin{condition}[start=4]
    \item \label{prop-Hfib-completion-xk}
    \(d_X(x_k',x_{k+1}') < d_B(b_k',b_{k+1}') + \ep/2^{k+4}\).
  \end{condition}
  Then, by \ref{prop-Hfib-completion-s-ep} and \ref{prop-Hfib-completion-xk}, for any integers \(0 \leq n < m\), it holds that
  \begin{align*}
    d_X(x_n', x_m') &\leq \sum_{n \leq k < m} d_X(x_k', x_{k+1}')
    < \sum_{n \leq k < m} (d_B(b_k', b_{k+1}') + \ep/2^{k+4}) \\
    &< \ep/2^{n+3} + \ep/2^{n+3} = \ep/2^{n+2}.
  \end{align*}
  Hence, the sequence \((x_n')_{n\in \N}\) is a Cauchy sequence in \(X\).
  Write \(x'\dfn \lim_{n\to \infty}x_n'\in \hat{X}\).
  Then, \(\hat{f}(x') = \lim_{n\to \infty}f(x_n') = \lim_{n\to \infty}b_n' = b'\) and \(d_{\hat{X}}(x_0',x') = \lim_{m\to \infty}d_{\hat{X}}(x_0', x_m') \leq \ep/2^{0+2} = \ep/4\).
  Hence, by \ref{prop-Hfib-completion-ep/8} and \ref{prop-Hfib-completion-x0}, it holds that
  \begin{align*}
    d_{\hat{X}}(x, x') &\leq d_{\hat{X}}(x, x_0) + d_X(x_0,x_0') + d_{\hat{X}}(x_0', x') \\
    &< \ep/8 + d_B(f(x_0), b_0') + \ep/4 + \ep/4 = d_B(f(x_0), b_0') + 5\ep/8.
  \end{align*}
  Moreover, by \ref{prop-Hfib-completion-ep/8} and \ref{prop-Hfib-completion-s-ep}, it holds that
  \begin{align*}
    d_B(f(x_0), b_0') + 5\ep/8 &\leq d_{\hat{B}}(f(x_0), \hat{f}(x)) + d_{\hat{B}}(\hat{f}(x), b') + d_{\hat{B}}(b', b_0') + 5\ep/8 \\
    &\leq d_{\hat{X}}(x_0,x) + d_{\hat{B}}(\hat{f}(x), b') + \sum_{n\in \N}d_B(b_n',b_{n+1}') + 5\ep/8 \\
    &< \ep/8 + d_{\hat{B}}(\hat{f}(x), b') + \ep/2^{0+3} + 5\ep/8
    < d_{\hat{B}}(\hat{f}(x), b') + \ep.
  \end{align*}
  Therefore, \(d_{\hat{X}}(x,x') < d_{\hat{B}}(\hat{f}(x), b') + \ep\). 
  By \cref{rmk-Hfib} \ref{rmk-Hfib-surj} and \cref{prop-Hfib-char} ``\ref{prop-Hfib-char-Hfib}\(\Leftrightarrow\)\ref{prop-Hfib-char-ineq}'', this shows that \(\hat{f}\) is a \Hfib.
  This completes the proof of \cref{prop-Hfib-completion}.
\end{proof}

\begin{corollary}\label{prop-Hfib-cplt-descending}
  Let \(f:X\to B\) be a morphism in \(\Met\).
  Assume that \(f\) is a \Hfib and that \(X\) is complete.
  Then, \(B\) is complete.
\end{corollary}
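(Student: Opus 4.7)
The plan is to deduce this immediately from \cref{prop-Hfib-completion}. Since \(X\) is complete, the canonical isometric embedding \(X \inj \hat{X}\) is an isomorphism, so we may identify \(X = \hat{X}\). By \cref{prop-Hfib-completion}, the completion \(\hat{f}: \hat{X} = X \to \hat{B}\) of \(f\) is again a \Hfib; in particular, \(\hat{f}\) is surjective.

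Take an arbitrary point \(b' \in \hat{B}\). By surjectivity of \(\hat{f}\), there exists \(x \in \hat{X} = X\) with \(\hat{f}(x) = b'\). But \(\hat{f}\) restricted to \(X\) coincides with \(f\), whose image lies in \(B\); hence \(b' = \hat{f}(x) = f(x) \in B\). Therefore the inclusion \(B \subset \hat{B}\) is an equality, which is precisely the assertion that \(B\) is complete.

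The argument is essentially a one-liner given \cref{prop-Hfib-completion}; there is no real obstacle, and no need to redo the sequence-lifting computation already carried out in the proof of that proposition. If one preferred a direct proof, one would instead take a Cauchy sequence \((b_n)\) in \(B\), pass to a subsequence with \(\sum_n d_B(b_n, b_{n+1}) < \infty\), and use \cref{prop-Hfib-char}~``\ref{prop-Hfib-char-Hfib}\(\Leftrightarrow\)\ref{prop-Hfib-char-ineq}'' to inductively choose lifts \(x_n \in f^{-1}(b_n)\) satisfying \(d_X(x_n, x_{n+1}) < d_B(b_n, b_{n+1}) + \varepsilon/2^n\); the resulting sequence is Cauchy in \(X\), hence converges to some \(x \in X\), and then \(f(x) = \lim_n b_n\) by 1-Lipschitz continuity of \(f\). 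Either route works, but appealing to \cref{prop-Hfib-completion} is considerably cleaner.
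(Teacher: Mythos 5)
Your proposal is correct and follows essentially the same route as the paper: apply \cref{prop-Hfib-completion} to get surjectivity of \(\hat{f}:\hat{X}\to\hat{B}\), identify \(X\tosim\hat{X}\) by completeness of \(X\), and conclude that the inclusion \(B\inj\hat{B}\) is surjective, hence \(B\) is complete. The alternative direct argument you sketch is also fine, but unnecessary, exactly as you say.
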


\begin{proof}
  By \cref{prop-Hfib-completion}, the composite \(X\tosim \hat{X}\xto{\hat{f}} \hat{B}\) is surjective.
  Hence, the natural inclusion \(B\inj \hat{B}\) is surjective.
  This implies that \(B\) is complete.
\end{proof}

\begin{lemma}\label{prop-Hfib-tot-bd}
  Let \(f:X\to B\) be a morphism in \(\Met\).
  Assume that \(B\) is totally bounded, that \(f\) is a \Hfib, and moreover that for any \(b\in B\), the fiber \(f^{-1}(b)\) is totally bounded.
  Then, \(X\) is totally bounded.
\end{lemma}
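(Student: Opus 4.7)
The plan is to build, for any given \(\ep > 0\), a finite \(\ep\)-net in \(X\) by gluing together a finite net in \(B\) with finite nets in the fibers over that net, and to use the submetry property of \(f\) to lift approximate distances from \(B\) to \(X\). The key technical ingredient is the equivalence ``\ref{prop-Hfib-char-Hfib}\(\Leftrightarrow\)\ref{prop-Hfib-char-ineq}'' of \cref{prop-Hfib-char}.

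Concretely, I would proceed as follows. Given \(\ep > 0\), first use total boundedness of \(B\) to choose a finite \((\ep/3)\)-net \(\{b_1, \ldots, b_n\}\subset B\). Next, for each \(i\in \{1,\ldots,n\}\), use the assumed total boundedness of the fiber \(f^{-1}(b_i)\) to choose a finite \((\ep/3)\)-net \(N_i\subset f^{-1}(b_i)\). Set \(N\dfn \bigcup_{i=1}^n N_i\), which is a finite subset of \(X\), and I claim \(N\) is an \(\ep\)-net in \(X\).

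To verify the claim, fix \(x\in X\). Choose \(i\) such that \(d_B(f(x), b_i) < \ep/3\). By \cref{prop-Hfib-char} ``\ref{prop-Hfib-char-Hfib}\(\Leftrightarrow\)\ref{prop-Hfib-char-ineq}'' applied with \(b_0 = f(x)\), \(b_1 = b_i\), and the point \(x\in f^{-1}(b_0)\), there exists \(x'\in f^{-1}(b_i)\) with \(d_X(x, x') \leq d_B(f(x), b_i) + \ep/3 < 2\ep/3\). Then choose \(y\in N_i\) with \(d_X(x', y) < \ep/3\). Triangle inequality yields \(d_X(x, y) < \ep\), so \(N\) is indeed a finite \(\ep\)-net.

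There is no significant obstacle; the only thing to be careful about is the budget-splitting (\(\ep/3\) each) and invoking \ref{prop-Hfib-char-ineq} with the correct direction of lifting, from the base point \(f(x)\) down to the fiber over the chosen net point \(b_i\). The surjectivity of \(f\) (part of being a \Hfib) is what guarantees each \(f^{-1}(b_i)\) is nonempty, so that \(N_i\) can be chosen.
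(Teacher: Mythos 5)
Your proof is correct and follows essentially the same argument as the paper: a finite \((\ep/3)\)-net in \(B\), finite \((\ep/3)\)-nets in the fibers over the net points, and the equivalence ``\ref{prop-Hfib-char-Hfib}\(\Leftrightarrow\)\ref{prop-Hfib-char-ineq}'' of \cref{prop-Hfib-char} to lift \(x\) to within \(d_B(f(x),b_i)+\ep/3\) of the fiber \(f^{-1}(b_i)\), followed by the triangle inequality. The budget-splitting and the direction of the lift match the paper's proof exactly.
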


\begin{proof}
  Let \(\ep > 0\) be a positive real number.
  Since \(B\) is totally bounded, there exist an integer \(N\in \N\) and points \(b_0,\cdots ,b_N\in B\) such that \(B\subset \bigcup_{i=0}^N \ball_B(b_i,\ep/3)\).
  Since for each \(i\in \{0,\cdots ,N\}\) the fiber \(f^{-1}(b_i)\) is totally bounded, there exist an integer \(M\in \N\) and points \(a_{ij}\in X, (i\in\{0,\cdots,N\}, j\in\{0,\cdots,M\})\), such that for each \(i\in \{0,\cdots,N\}\), \(a_{ij}\in f^{-1}(b_i)\), and \(f^{-1}(b_i)=\bigcup_{j=0}^M f^{-1}(b_i)\cap \ball_X(a_{ij},\ep/3)\).
  To prove that \(X\) is totally bounded, it suffices to prove that \(X \subset \bigcup_{i=0}^N\bigcup_{j=0}^M \ball_X(a_{ij},\ep)\).
  Let \(x\in X\) be a point.
  Then, there exists an integer \(i_0\in\{0,\cdots, N\}\) such that \(f(x)\in \ball_B(b_{i_0},\ep/3)\).
  Since \(f\) is a \Hfib, there exists a point \(x_{i_0}\in f^{-1}(b_{i_0})\) such that \(d_X(x, x_{i_0}) < d_B(f(x), b_{i_0}) + \ep/3\).
  Then, there exists an integer \(j_0\in\{0,\cdots,M\}\) such that \(x_{i_0}\in f^{-1}(b_{i_0})\cap \ball_X(a_{i_0j_0}, \ep/3)\).
  Therefore,
  \[d_X(x, a_{i_0j_0}) \leq d_X(x,x_{i_0}) + d_X(x_{i_0},a_{i_0j_0})
  < d_B(f(x), b_{i_0}) + \ep/3 + \ep/3 < \ep.\]
  Thus, \(X\) is totally bounded.
  This completes the proof of \cref{prop-Hfib-tot-bd}.
\end{proof}

\begin{lemma}\label{prop: Hfib fiberwise cpt prop}
  Let \(f:X\to B\) be a morphism in \(\Met\).
  Assume that \(f\) is a \Hfib.
  Then, the underlying map of topological spaces of \(f\) is proper (i.e., a closed continuous map with compact fibers) if and only if for every \(b\in B\), the fiber \(f^{-1}(b)\) is compact.
\end{lemma}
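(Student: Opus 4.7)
The forward implication is immediate from the definition of properness: the fiber over any point $b\in B$ is the preimage of the compact set $\{b\}$ and is therefore compact. For the converse, continuity of $f$ is automatic since morphisms in $\Met$ are $1$-Lipschitz, and compactness of fibers is the standing assumption, so the only thing left to check is that $f$ is a closed map.

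Since $X$ and $B$ are metric spaces, hence first-countable and Hausdorff, closedness can be verified sequentially. The plan is to fix a closed subset $F\subset X$ together with a sequence $(b_n)_{n\in\N}$ in $f(F)$ converging to some $b\in B$, choose $x_n\in F$ with $f(x_n)=b_n$, and then produce a limit point $y\in F$ with $f(y)=b$, thereby showing $b\in f(F)$.

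The key step uses the submetry hypothesis via \cref{prop-Hfib-char} ``\ref{prop-Hfib-char-Hfib}$\Leftrightarrow$\ref{prop-Hfib-char-ineq}'': for each $n$, pick $y_n\in f^{-1}(b)$ satisfying
\[
d_X(x_n,y_n)\leq d_B(b_n,b)+\tfrac{1}{n+1}.
\]
By the assumed compactness of $f^{-1}(b)$, a subsequence $(y_{n_k})_k$ converges to some $y\in f^{-1}(b)$. Since $d_B(b_{n_k},b)\to 0$ and $1/(n_k+1)\to 0$, we obtain $d_X(x_{n_k},y_{n_k})\to 0$, so the triangle inequality gives $x_{n_k}\to y$. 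Closedness of $F$ then forces $y\in F$, and $b=f(y)\in f(F)$ follows.

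I do not anticipate a serious obstacle. The argument is a soft sequential chase, and the main thing to coordinate is the two limiting processes---the convergence $b_n\to b$ in $B$ and the compactness extraction inside the single fiber $f^{-1}(b)$---which is handled cleanly by the explicit distance bound furnished by the submetry lift. The restriction to genuine metric spaces (rather than pseudometric or extended-metric spaces) is used only indirectly, namely to guarantee Hausdorffness and first-countability so that the sequential criterion for closedness is available.
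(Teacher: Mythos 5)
Your proof is correct, but it follows a genuinely different route from the paper's. You verify the definition of ``proper'' given in the statement directly: continuity is automatic, compact fibers are assumed, and closedness of $f$ is checked sequentially, using \cref{prop-Hfib-char} ``\ref{prop-Hfib-char-Hfib}$\Leftrightarrow$\ref{prop-Hfib-char-ineq}'' to push the chosen points $x_n\in F$ into the limit fiber $f^{-1}(b)$ and then extracting a convergent subsequence there; all steps check out (subsets of metric spaces are closed iff sequentially closed, and compact metric fibers are sequentially compact). The paper instead invokes the fact that metric spaces are Hausdorff and compactly generated, so by Engelking's Theorem~3.7.18 it suffices to show that $X$ is compact whenever $B$ is; compactness is then obtained by combining total boundedness (\cref{prop-Hfib-tot-bd}) with completeness, the latter proved by a Cauchy-sequence argument that uses the same submetry lift into the limit fiber that you use. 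What each approach buys: the paper's reduction shows along the way that preimages of compact subsets are compact (a formulation of properness that is convenient elsewhere) and reuses \cref{prop-Hfib-tot-bd}, while your argument is more elementary and self-contained --- it needs neither Engelking's theorem nor the total-boundedness lemma, and it matches the stated definition of proper (closed continuous with compact fibers) verbatim. The common key mechanism in both proofs is the distance bound $d_X(x_n,y_n)\leq d_B(b_n,b)+\varepsilon_n$ supplied by the \Hfib hypothesis, with compactness of a single fiber furnishing the limit point.
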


\begin{proof}
  The necessity follows immediately from the definition of a proper map.
  Assume that for any \(b\in B\), \(f^{-1}(b)\) is compact.
  Note that any metric space is Hausdorff and compactly generated.
  Hence, by \cite[Theorem~3.7.18]{Engelk}, to prove that \(f\) is proper, it suffices to prove that \(X\) is compact whenever \(B\) is compact.
  Assume moreover that \(B\) is compact.

  Let \((a_n)_{n\in\N}\) be a Cauchy sequence in \(X\). 
  Since \(B\) is compact, the Cauchy sequence \((f(a_n))_{n\in\N}\) in \(B\) converges to a point \(b_{\infty}\in B\).
  Since \(f\) is a \Hfib, for each \(n\in \N\), there exists a point \(a'_n\in f^{-1}(b_{\infty})\) such that \(d_X(a_n,a_n') < d_B(f(a_n), b_{\infty}) + 1/2^n\).
  Let \(\ep > 0\) be a positive real number.
  There exists \(N\in \N\) such that for all \(n,m\geq N\), \(d_X(a_n, a_m) < \ep/3 - 1/2^{N-1}\).
  Then, for each \(n,m\geq N\), 
  \begin{align*}
    d_X(a'_n, a'_m) &\leq d_X(a'_n, a_n) + d_X(a_n, a_m) + d_X(a_m, a_m') \\
    &< (d_B(b_{\infty}, f(a_n)) + 1/2^n) + (\ep/3 - 1/2^{N-1}) + (d_B(b_{\infty}, f(a_m)) + 1/2^m) \\
    &\leq d_B(b_{\infty}, f(a_n)) + d_B(b_{\infty}, f(a_m)) + \ep/3 \leq \ep.
  \end{align*}
  Thus, \((a'_n)_{n\in \N}\) is a Cauchy sequence in \(f^{-1}(b_{\infty})\). 
  Since \(f^{-1}(b_{\infty})\) is a compact metric space, the sequence \((a'_n)_{n\in \N}\) converges to a unique point \(a_{\infty}\in f^{-1}(b_{\infty})\).
  Since \(d_X(a_n, a'_n) < d_B(f(a_n), b_{\infty}) + 1/2^n\) for all \(n\in\N\), this implies that the sequence \((a_n)_{n\in \N}\) converges to \(a_{\infty}\).
  In particular, \(X\) is complete.
  By \cref{prop-Hfib-tot-bd}, \(X\) is totally bounded.
  Thus, \(X\) is compact. 
  This completes the proof of \cref{prop: Hfib fiberwise cpt prop}.
\end{proof}

\begin{definition}\label{def-inCpt}
  Let \(f:X\to B\) be a morphism in \(\Met\).
  \begin{enumerate}
    \item
    We write
    \[\Cpt(X/B)\dfn \left\{A \subset X\,\middle|\,\text{\(A\) is compact, and \(\#(f(A)) = 1\)}\right\}.\]
    We regard \(\Cpt(X/B)\) as a metric space endowed with the Hausdorff distance. 
    \item
    For any object \(Z\in \tbMet_{/B}\), we write
    \[\inCpt(X/B)(Z) \dfn \left\{A\subset X\times_BZ\,\middle|\, \text{\(A\to Z\) is a proper \Hfib}\right\}.\]
    Since base change preserves the required properties, we obtain a functor
    \begin{align*}
      \inCpt(X/B): \tbMet_{/B}^{\op}&\to \sfSet, \\
      Z &\mapsto \inCpt(X/B)(Z).
    \end{align*}
  \end{enumerate}
\end{definition}

\begin{lemma}\label{prop: cpt repble}
  Let \(f:X\to B\) be a morphism in \(\Met\).
  Then, the functor \(\inCpt(X/B)\) is represented by an object \(\Cpt(X/B)\to B\) in \(\Met_{/B}\) (in the sense of \cref{def: repble}).
\end{lemma}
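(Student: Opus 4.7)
The plan is to construct mutually inverse natural bijections between \(\Hom_{\epMet_{/B}}(-,\Cpt(X/B))\) and \(\inCpt(X/B)\) as presheaves on \(\tbMet_{/B}\). First, note that the structure map \(p:\Cpt(X/B)\to B\), sending a compact set \(A\) to the unique point of \(f(A)\), is 1-Lipschitz: for \(A_1,A_2\in\Cpt(X/B)\) with \(p(A_i)=b_i\), each \(A_i\) is non-empty (since \(\#f(A_i)=1\)), so choosing any \(a_i\in A_i\) gives \(d_B(b_1,b_2)\leq d_X(a_1,a_2)\); optimizing yields \(d_B(b_1,b_2)\leq \dH_X(A_1,A_2)\). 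Moreover, by \cref{lem: lim and colim in PEMet} \ref{lem: lim and colim in PEMet ass: lim}, the fiber product \(X\times_B Z\) carries the supremum metric, which in this setting reduces to \(d_X\lor d_Z\) since \(d_X\geq d_B\circ f\).

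For the forward direction, given \(h\in\Hom_{\Met_{/B}}(Z,\Cpt(X/B))\), define \(A_h\dfn \{(x,z)\in X\times_B Z\mid x\in h(z)\}\); this is a subset of \(X\times_B Z\) because \(h(z)\subset f^{-1}(g(z))\), where \(g:Z\to B\) denotes the structure map. The fiber of \(\pi_Z:A_h\to Z\) over \(z\) is isometric to \(h(z)\subset X\), hence non-empty and compact. To check the submetry condition, fix \(z_1,z_2\in Z\) and \((x_1,z_1)\in A_h\); since \(h\) is 1-Lipschitz, \(\dH_X(h(z_1),h(z_2))\leq d_Z(z_1,z_2)\), so for every \(\ep>0\) there exists \(x_2\in h(z_2)\) with \(d_X(x_1,x_2)\leq d_Z(z_1,z_2)+\ep\), whence \(d_{A_h}((x_1,z_1),(x_2,z_2)) = d_X(x_1,x_2)\lor d_Z(z_1,z_2)\leq d_Z(z_1,z_2)+\ep\). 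By \cref{prop-Hfib-char} ``\ref{prop-Hfib-char-ineq}\(\Rightarrow\)\ref{prop-Hfib-char-Hfib}'' together with \cref{rmk-Hfib} \ref{rmk-Hfib-surj}, \(\pi_Z\) is a \Hfib, and by \cref{prop: Hfib fiberwise cpt prop} it is proper, so \(A_h\in\inCpt(X/B)(Z)\).

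For the reverse direction, given \(A\in\inCpt(X/B)(Z)\), set \(h_A(z)\dfn \pi_X(A\cap \pi_Z^{-1}(z))\); this is a non-empty compact subset of \(f^{-1}(g(z))\) because \(\pi_Z:A\to Z\) is a proper \Hfib with \(f\circ\pi_X = g\circ \pi_Z\). Its 1-Lipschitz property with respect to \(\dH_X\) follows by running the above translation in reverse: for any \((x_1,z_1)\in A\) and any \(\ep>0\), \cref{prop-Hfib-char} \ref{prop-Hfib-char-ineq} applied to \(\pi_Z\) yields \((x_2,z_2)\in A\) with \(d_X(x_1,x_2)\leq d_X(x_1,x_2)\lor d_Z(z_1,z_2)\leq d_Z(z_1,z_2)+\ep\), giving \(\dH_X(h_A(z_1),h_A(z_2))\leq d_Z(z_1,z_2)\). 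The assignments \(h\mapsto A_h\) and \(A\mapsto h_A\) are mutually inverse by direct unpacking of the definitions, and naturality in \(Z\) is immediate from compatibility with base change. The only real subtlety is the recognition of the supremum metric on \(X\times_B Z\); once this is in place, the submetry condition on \(\pi_Z\) and the 1-Lipschitz Hausdorff condition on \(h\) are verbatim reformulations of one another via \cref{prop-Hfib-char} \ref{prop-Hfib-char-ineq}.
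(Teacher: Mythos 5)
Your proposal is correct and follows essentially the same route as the paper: the key constructions (the correspondence \(A_h=\{(x,z)\mid x\in h(z)\}\subset X\times_B Z\) on one side, taking fiberwise images under \(\pi_X\) on the other, with the submetry condition translated through \cref{prop-Hfib-char} and properness obtained from \cref{prop: Hfib fiberwise cpt prop}) are exactly those of the paper's proof, the only difference being that you exhibit both inverse maps explicitly while the paper phrases it as injectivity plus surjectivity of the natural transformation \(\eta\). Your additional checks (1-Lipschitzness of \(\Cpt(X/B)\to B\) and the identification of the sup metric on \(X\times_B Z\)) are harmless elaborations of points the paper leaves implicit.
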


\begin{proof}
  By \cref{lem: univ adj PEMet}~\ref{lem: univ adj PEMet ass: uset} and the definitions of \(\inCpt(X/B)\) and \(\Cpt(X/B)\), the natural map \[\mu(\inCpt(X/B))\to \Cpt(X/B)\] is an isomorphism in \(\epMet\).
  Moreover, by the definitions of \(\inCpt(X/B)\) and \(\Cpt(X/B)\), the natural morphism of presheaves \(\eta:\inCpt(X/B)\to \Hom_{\epMet_{/B}}(-, \Cpt(X/B))\) is injective.
  Let \(T\to B\) be an object of \(\tbMet_{/B}\) and \(f:T\to \Cpt(X/B)\) a morphism in \(\epMet_{/B}\).
  Write \(A_T\dfn \left\{(x, t)\,\middle|\, x\in f(t)\right\}\subset X\times_BT\). 
  Since \(f\) is a morphism in \(\epMet_{/B}\), for any \(t_1,t_2\in T\), we have \(d_T(t_1,t_2) \geq \dH_X(f(t_1), f(t_2))\). 
  Hence, the natural projection \(A_T\to T\) is a \Hfib.
  Moreover, for any \(t\in T\), the subset \(f(t)\subset X\) is compact. 
  Hence, by \cref{prop: Hfib fiberwise cpt prop}, the natural projection \(A_T\to T\) is proper.
  Thus, the natural projection \(A_T\to T\) defines an element of \(\inCpt(X/B)(T)\) whose image under \(\eta_T:\inCpt(X/B)(T)\to \Hom_{\epMet_{/B}}(-, \Cpt(X/B))\) is equal to \(f\).
  This implies that \(\eta\) is an isomorphism.
  This completes the proof of \cref{prop: cpt repble}.
\end{proof}

% By applying our theory and moduli theoretic perspective, we can prove the following well-known classical result.

\begin{corollary}\label{prop-hausdorff-dist-cplt}
  If \(X\) is a complete metric space, then \(\Cpt(X/*)\) is complete.  
\end{corollary}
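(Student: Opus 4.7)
The plan is to adapt the classical proof that the set of non-empty compact subsets of a complete metric space, equipped with the Hausdorff distance, is complete. Note that by the definition of $\Cpt(X/*)$, every $A\in \Cpt(X/*)$ is a non-empty compact subset of $X$; together with the remark that $\dH_X$ restricts to a genuine distance on non-empty bounded closed subsets, this makes the completeness question well-posed.

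Let $(A_n)_{n\in\N}$ be a Cauchy sequence in $\Cpt(X/*)$. First I would pass to a subsequence (a Cauchy sequence converges iff some subsequence does) so that $\dH_X(A_n, A_{n+1}) < 2^{-n}$ for all $n$. Then I would define the candidate limit
\[
A_\infty \dfn \left\{x\in X \,\middle|\, x=\lim_{n\to\infty} x_n,\ x_n\in A_n\ \text{for all large } n \right\}.
\]
The key observation is that for any $N$ and any $x_N\in A_N$, one can inductively choose $x_{n+1}\in A_{n+1}$ with $d_X(x_n, x_{n+1}) < 2^{-n}$ using that $\dH_X(A_n,A_{n+1})<2^{-n}$, producing a Cauchy sequence in $X$ whose limit (existing by completeness) lies in $A_\infty$ and satisfies $d_X(x_N, x) \leq 2^{-N+1}$. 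In particular $A_\infty\neq \emptyset$, and every point of $A_N$ is within $2^{-N+1}$ of some point of $A_\infty$.

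Next I would prove the reverse bound: any $x\in A_\infty$ is within $2^{-N+1}$ of some point of $A_N$, which follows directly from the definition (tail of the defining sequence is in $A_n$ for $n \geq N$, and those $A_n$ are close to $A_N$). Together these give $\dH_X(A_N, A_\infty) \to 0$. Then $A_\infty$ is closed: if $y_k\in A_\infty$ with $y_k\to y$, a diagonal argument using the defining sequences for the $y_k$ produces a sequence witnessing $y\in A_\infty$. For total boundedness of $A_\infty$, given $\ep>0$ pick $N$ with $\dH_X(A_N, A_\infty) < \ep/2$; a finite $\ep/2$-net of the compact set $A_N$ yields a finite $\ep$-net of $A_\infty$. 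Hence $A_\infty\subset X$ is closed and totally bounded in the complete space $X$, so $A_\infty$ is compact, i.e., $A_\infty\in \Cpt(X/*)$; and $\dH_X(A_n, A_\infty)\to 0$ along the subsequence, hence along the original sequence by the Cauchy property.

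The proof is essentially bookkeeping with the $2^{-n}$ bounds; the only mildly delicate step is the closedness of $A_\infty$, which requires either a diagonal extraction or a direct verification via the characterization $A_\infty = \bigcap_{N\geq 0} \overline{\bigcup_{n\geq N} A_n}$ (one can show these two descriptions coincide under our hypotheses). No genuine obstacle arises, and completeness of $X$ is used precisely once, in extracting limits of the inductively constructed Cauchy sequences.
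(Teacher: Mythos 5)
Your proof is correct, but it takes a genuinely different route from the paper. You reprove the classical completeness theorem for the hyperspace of non-empty compact subsets directly: extract a subsequence with geometric Hausdorff bounds, build the candidate limit \(A_\infty\) as the set of limits of eventually-chosen points (equivalently \(\bigcap_{N}\overline{\bigcup_{n\geq N}A_n}\)), verify \(\dH_X(A_N,A_\infty)\leq 2^{-N+1}\) by the usual chain construction, and get compactness of \(A_\infty\) from closedness plus total boundedness inherited from a nearby \(A_N\); all of this is sound, and your remark that \(\Cpt(X/*)\) consists exactly of the non-empty compact subsets makes the statement well-posed. The paper instead argues functorially: by \cref{prop: cpt repble}, a \(1\)-Lipschitz map \(Z\to\Cpt(X/*)\) from a totally bounded \(Z\) corresponds to a proper \Hfib \(A_f\subset X\times Z\to Z\); by \cref{prop-Hfib-completion} its completion \(\hat{A}_f\subset X\times\hat{Z}\to\hat{Z}\) (here completeness of \(X\) enters, via \(X\times\hat{Z}\) being complete) is again a proper \Hfib, which by representability corresponds to an extension \(\hat{Z}\to\Cpt(X/*)\); completeness follows by applying this with \(Z\) the underlying set of a Cauchy sequence. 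Your approach buys a self-contained, elementary argument independent of the earlier machinery; the paper's approach buys brevity by recycling \cref{prop: cpt repble} and \cref{prop-Hfib-completion}, stays consistent with its Yoneda-style viewpoint, and in effect proves the stronger statement that families of compact subsets over any totally bounded base extend canonically over the completion of the base, of which sequential completeness is the special case.
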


\begin{proof}
  Let \(f:Z\to \Cpt(X/*)\) be a morphism in \(\Met\) from an object \(Z\in\tbMet\). 
  By \cref{prop: cpt repble}, \(f\) corresponds, via the Yoneda lemma, to a proper \Hfib \(p_f:A_f\subset X\times Z\to Z\). 
  Then, by \cref{prop-Hfib-completion}, the completion \(\hat{p}_f:\hat{A}_f\subset X\times \hat{Z}\to \hat{Z}\) is a proper \Hfib.
  Hence, by \cref{prop: cpt repble}, the proper \Hfib \(\hat{p}_f:\hat{A}\to \hat{Z}\) corresponds, via the Yoneda lemma, to a morphism \(\tilde{f}: \hat{Z}\to \Cpt(X/*)\) such that \(\tilde{f}|_Z = f\). 
  This implies that \(\Cpt(X/*)\) is complete. 
  This completes the proof of \cref{prop-hausdorff-dist-cplt}.
\end{proof}

\section{The \texorpdfstring{\(\lhftop\)}{lsm}-Topology on \texorpdfstring{\(\tbMet\)}{tbMet}}
\label{section: lhftop}

In this section, we define a Grothendieck topology on the category of totally bounded metric spaces using the notion of \textbf{\morphLHF} which is obtained by localizing the notion of \Hfib.
We call this topology the \textbf{lsm topology} \confer{\cref{def-lhf-top}}.
We further show that the lsm topology is subcanonical \confer{\cref{lem: lhf subcan}}, and that for any lsm covering, every descent datum with respect to that covering in the category of metric spaces is effective \confer{\cref{lem: eff descent}}.

\begin{definition}[{\cite[{Definition~2.7}]{{KL}}}]\label{def: lrp}
  Let \(f:X\to Y\) be a morphism in \(\Met\).
  We say that \(f\) is a \textbf{\morphLHF} if, for any point \(x\in X\), there exists a positive real number \(r>0\) such that for any point \(x'\in \ball_X(x, r)\) and any \(0 < s < r- d_X(x,x')\), it holds that \(f(\ball_X(x',s)) = \ball_Y(f(x'),s)\).
\end{definition}

\begin{lemma}\label{lem: lrp fundamental}
  Let \(f:X\to Y\) be a morphism in \(\Met\).
  Then, the following assertions hold:
  \begin{assertion}
    \item \label{lem: lrp fundamental ass: hf}
    If \(f\) is a \Hfib, then \(f\) is a \morphLHF.
    \item \label{lem: lrp fundamental ass: open imt}
    If \(f\) is an open isometric embedding, then \(f\) is a \morphLHF.
    \item \label{lem: lrp fundamental ass: open map}
    If \(f\) is a \morphLHF, then \(f\) is an open map.
    \item \label{lem: lrp fundamental ass: comp}
    Let \(g:Y\to Z\) be a morphism in \(\Met\).
    If \(f\) and \(g\) are \morphLHFs, then \(g\circ f\) is a \morphLHF.
    \item \label{lem: lrp fundamental ass: bc}
    Let \(q:Y'\to Y\) be a morphism in \(\Met\).
    Write \(f':X'\dfn X\times_YY'\to Y'\) for the natural projection.
    If \(f\) is a \morphLHF, then \(f'\) is a \morphLHF.
  \end{assertion}
\end{lemma}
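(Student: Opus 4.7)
The plan is to treat the five assertions in order, using the characterization of submetries in \cref{prop-Hfib-char-ball} and the explicit formula for the metric on limits in \cref{lem: lim and colim in PEMet} \ref{lem: lim and colim in PEMet ass: lim}.

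For \ref{lem: lrp fundamental ass: hf}, the observation is that \cref{prop-Hfib-char-ball} already states the equality \(f(\ball_X(x',s)) = \ball_Y(f(x'),s)\) globally for every \(x'\in X\) and every \(s>0\) when \(f\) is a \Hfib. Hence any choice of \(r>0\) (e.g.\ \(r=1\)) witnesses that \(f\) is a \morphLHF at every point.

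For \ref{lem: lrp fundamental ass: open imt}, given an open isometric embedding \(f\) and a point \(x\in X\), I would choose \(r>0\) small enough that \(\ball_Y(f(x),r)\subseteq f(X)\); this exists because \(f(X)\) is open. For \(x'\in \ball_X(x,r)\) and \(0<s<r-d_X(x,x')\), the inclusion \(f(\ball_X(x',s))\subseteq \ball_Y(f(x'),s)\) is immediate from \(f\) being \(1\)-Lipschitz. Conversely, any \(y\in \ball_Y(f(x'),s)\) lies in \(\ball_Y(f(x),r)\subseteq f(X)\) by the triangle inequality, so \(y=f(x'')\) for a unique \(x''\in X\), and since \(f\) is isometric, \(d_X(x',x'')=d_Y(f(x'),y)<s\), giving the reverse inclusion.

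For \ref{lem: lrp fundamental ass: open map}, let \(U\subseteq X\) be open and \(y=f(x)\in f(U)\). Take \(r>0\) as in the definition of \morphLHF at \(x\), and shrink \(s>0\) so that \(s<r\) and \(\ball_X(x,s)\subseteq U\). Then \(\ball_Y(y,s)=f(\ball_X(x,s))\subseteq f(U)\), so \(f(U)\) is open. For \ref{lem: lrp fundamental ass: comp}, given \morphLHFs \(f\colon X\to Y\) and \(g\colon Y\to Z\) and a point \(x\in X\), I would take the minimum of the radii witnessing the \morphLHF condition for \(f\) at \(x\) and for \(g\) at \(f(x)\). Because \(f\) is \(1\)-Lipschitz, the inequalities \(d_Y(f(x),f(x'))\le d_X(x,x')\) and \(s<r-d_X(x,x')\le r-d_Y(f(x),f(x'))\) transfer the local hypothesis for \(g\) from a ball around \(x\) to a ball around \(f(x)\); composing the two ball identities yields \(g(f(\ball_X(x',s)))=\ball_Z(g(f(x')),s)\).

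The last assertion \ref{lem: lrp fundamental ass: bc} is the one requiring the most care, but it is not deep: by \cref{lem: lim and colim in PEMet} \ref{lem: lim and colim in PEMet ass: lim}, the distance on \(X'=X\times_Y Y'\) is \(d_{X'}((x_1,y_1'),(x_2,y_2'))=\max\{d_X(x_1,x_2),d_{Y'}(y_1',y_2')\}\). Fix \((x,y')\in X'\) and let \(r>0\) witness the \morphLHF condition for \(f\) at \(x\); I claim the same \(r\) works for \(f'\) at \((x,y')\). The inclusion \(f'(\ball_{X'}((x'',y'''),s))\subseteq \ball_{Y'}(y''',s)\) is automatic because \(f'\) is \(1\)-Lipschitz. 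For the reverse inclusion, pick \(y''''\in \ball_{Y'}(y''',s)\); the key step is that \(q(y'''')\in \ball_Y(f(x''),s)\) and that \(x''\in \ball_X(x,r)\) with \(s<r-d_X(x,x'')\), so the \morphLHF hypothesis on \(f\) produces \(x^{\sharp}\in \ball_X(x'',s)\) with \(f(x^{\sharp})=q(y'''')\); the pair \((x^{\sharp},y'''')\) then lies in \(X'\) and in \(\ball_{X'}((x'',y'''),s)\) because the defining distance is a maximum. The main conceptual point throughout the proof is to track how the ``budget'' \(r-d_X(x,x')\) relaxes under \(1\)-Lipschitz maps, which is what makes the radius \(r\) propagate through composition and base change; no delicate estimates are needed beyond this.
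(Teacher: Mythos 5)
Your proof is correct and follows essentially the same route as the paper: assertion (1) via the ball characterization of submetries (\cref{prop-Hfib-char-ball}), assertions (2)--(4) directly from \cref{def: lrp}, and assertion (5) by the same lifting argument through the max (sup) metric on \(X\times_Y Y'\) that the paper spells out. No gaps; the details you supply for (2)--(4) are exactly what the paper leaves as immediate.
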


\begin{proof}
  \Cref{lem: lrp fundamental ass: hf} follows immediately from \cref{prop-Hfib-char-ball}.
  Assertions \ref{lem: lrp fundamental ass: open imt}, \ref{lem: lrp fundamental ass: open map}, \ref{lem: lrp fundamental ass: comp} follow immediately from \cref{def: lrp}.

  Next, we prove \cref{lem: lrp fundamental ass: bc}.
  Let \(x'\in X'\) be a point.
  Write \(p: X'\to X\) for the natural projection.
  Since \(f\) is a \morphLHF, there exists a positive real number \(r > 0\) such that for any point \(x_0\in\ball_X(p(x'), r)\) and any \(0 < s_0 < r - d_X(p(x'), x_0)\), it holds that \(f(\ball_X(x_0, s_0)) = \ball_Y(f(x_0), s_0)\).
  Let \(x'_0\in\ball_{X'}(x', r)\) be a point, let \(0 < s'_0 < r - d_{X'}(x', x'_0)\), and let \(y'\in \ball_{Y'}(f'(x'_0), s'_0)\) be a point.
  Since \(p\) is a morphism in \(\Met\), \(0 < s'_0 < r - d_X(p(x'), p(x'_0))\). 
  Moreover, since \(q\) is a morphism in \(\Met\),
  \[q(y')\in \ball_Y(q(f'(x'_0)), s'_0) = \ball_Y(f(p(x'_0)), s'_0) = f(\ball_X(p(x'_0), s'_0)).\]
  Hence, there exists a point \(x\in \ball_X(p(x'_0), s'_0)\) such that \(f(x) = q(y')\).
  Then,
  \[d_{X'}(x'_0, (x, y')) = \max\{d_X(p(x'_0), x), d_{Y'}(f'(x'_0), y')\} < s'_0.\]
  In particular, \((x,y')\in \ball_{X'}(x'_0, s'_0)\).
  This implies that
  \[\ball_{Y'}(f'(x'_0), s'_0)\subset f'(\ball_{X'}(x'_0, s'_0))\subset \ball_{Y'}(f'(x'_0), s'_0).\]
  This completes the proof of \cref{lem: lrp fundamental}.
\end{proof}

Next, we define a Grothendieck topology on \(\Met\).
% We use terminology for site according to \cite[\href{https://stacks.math.columbia.edu/tag/00VG}{Tag 00VG}]{stacks-project}.

\begin{definition}\label{defi: cov in Met}
  Let \(X\in \tbMet\) be an object and \(\{f_i:U_i\to X\}_{i\in I}\) a family of morphisms in \(\tbMet\).
  We say that \(\{f_i:U_i\to X\}_{i\in I}\) is an \textbf{\covlhf} of \(X\) if the following conditions hold:
  \begin{condition}
    \item \label{defi: cov in Met enumi: lhf condi: lhf}
    For any \(i\), \(f_i:U_i\to X\) is a \morphLHF.
    \item \label{defi: cov in Met enumi: lhf condi: 3pt}
    For any points \(x_1,x_2, x_3\in X\), and any positive real number \(\ep > 0\), there exist an element \(i\in I\) and points \(u_1\in f_i^{-1}(x_1)\), \(u_2\in f_i^{-1}(x_2)\), \(u_3\in f_i^{-1}(x_3)\) such that \(d_{U_i}(u_1, u_2) < d_X(x_1, x_2) + \ep\), and \(d_{U_i}(u_2, u_3) < d_X(x_2, x_3) + \ep\).
  \end{condition}
\end{definition}

\begin{remark}\label{rmk: cov in Met}
  \ 
  \begin{enumerate}
    \item The term ``lsm'' stands for ``\textbf{l}ocal \textbf{s}ub\textbf{m}etry''.
    \item \label{rmk: cov in Met enumi: surj}
    Let \(X\in \tbMet\) be an object and \(\{f_i:U_i\to X\}_{i\in I}\) a family of morphisms in \(\tbMet\).
    If \(\{f_i:U_i\to X\}_{i\in I}\) is an \covlhf of \(X\), then it holds that \(X = \bigcup_{i\in I}\im(f_i)\).
  \end{enumerate}
\end{remark}

% \begin{remark}
%   The term ``lhf'' stands for ``\textbf{l}ocalized \textbf{H}ausdorff \textbf{f}ibration''.
% \end{remark}

\begin{lemma}\label{lem: top on Met}
  The following assertions hold:
  \begin{assertion}
    \item \label{lem: top on Met ass: lhf cov from H-fib}
    For any \Hfib \(f:X\to Y\) in \(\tbMet\) and any positive real number \(r > 0\), the family of morphisms
    \[\mcU(f,r)\dfn \left\{ B_X(x_1,r)\cup B_X(x_2,r)\cup B_X(x_3,r)\inj X \xto{f} Y\,\middle|\, x_1,x_2,x_3\in X\right\}\]
    is an \covlhf of \(Y\).
    In particular, for any isomorphism \(\iota:X\tosim Y\) in \(\tbMet\), \(\{\iota\}\) is an \covlhf.
    \item \label{lem: top on Met ass: bc}
    Let \(f:Y\to X\) be a morphism in \(\tbMet\) and \(\{f_i:U_i\to X\}_{i\in I}\) an \covlhf.
    Then, the family of morphisms \(\{f_i\times_X \id_Y: U_i\times_X Y\to Y\}_{i\in I}\) is an \covlhf of \(Y\).
    \item \label{lem: top on Met ass: comp}
    Let \(X\in \tbMet\) be an object, \(\{f_i:U_i\to X\}_{i\in I}\) an \covlhf in \(\tbMet\), and \(\{g_{ij}: U_{ij}\to U_i\}_{j\in J_i}\) (\(i\in I\)) \covlhf s.
    % Assume that \(\{f_i:U_i\to X\}_{i\in I}\) and \(\{g_{ij}: U_{ij}\to U_i\}_{j\in J_i}\) (\(i\in I\)) are lsm coverings.
    Then, \(\{f_i\circ g_{ij}: U_{ij}\to X\}_{j\in J_i, i\in I}\) is an \covlhf.
  \end{assertion}
  In particular, \covlhf s define a Grothendieck pretopology (cf. \cite[Definition 0.31]{Johnstone}, \cite[\href{https://stacks.math.columbia.edu/tag/00V0}{Tag 00V0}, \href{https://stacks.math.columbia.edu/tag/00VH}{Tag 00VH}]{stacks-project}) on \(\tbMet\).
\end{lemma}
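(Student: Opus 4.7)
The plan is to verify the three assertions in turn, and in each case to separate the two conditions of \cref{defi: cov in Met}: condition \ref{defi: cov in Met enumi: lhf condi: lhf} (that each member of the family is a \morphLHF) will reduce immediately to parts of \cref{lem: lrp fundamental}, so the real work lies in condition \ref{defi: cov in Met enumi: lhf condi: 3pt} (the three-point lifting condition). Throughout I will use the fact \confer{\cref{lem: lim and colim in PEMet} \ref{lem: lim and colim in PEMet ass: lim}} that the distance on a fiber product is the maximum of the two component distances, which also makes clear that the fiber product of totally bounded metric spaces over a totally bounded base remains totally bounded, so everything stays in \(\tbMet\).

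For assertion \ref{lem: top on Met ass: lhf cov from H-fib}, each member of \(\mcU(f,r)\) factors as an open isometric embedding of a union of open balls into \(X\), followed by the \Hfib \(f\); hence it is a \morphLHF by \cref{lem: lrp fundamental} \ref{lem: lrp fundamental ass: hf}, \ref{lem: lrp fundamental ass: open imt}, \ref{lem: lrp fundamental ass: comp}. For the three-point condition, given \(y_1,y_2,y_3\in Y\) and \(\ep>0\), I will pick any \(u_1\in f^{-1}(y_1)\) and apply \cref{prop-Hfib-char} ``\ref{prop-Hfib-char-Hfib}\(\Leftrightarrow\)\ref{prop-Hfib-char-ineq}'' twice with budget \(\ep/2\) to obtain \(u_2\in f^{-1}(y_2)\) and \(u_3\in f^{-1}(y_3)\) with \(d_X(u_1,u_2)<d_Y(y_1,y_2)+\ep\) and \(d_X(u_2,u_3)<d_Y(y_2,y_3)+\ep\). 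The key trick is then to take the centers \(x_k\dfn u_k\), so that each \(u_k\) tautologically lies in \(\ball_X(x_k,r)\); this selects the required member of \(\mcU(f,r)\). For the ``in particular'' clause, the singleton \(\{\iota\}\) satisfies both conditions trivially, by setting \(u_k\dfn \iota^{-1}(y_k)\).

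For assertion \ref{lem: top on Met ass: bc}, each base change \(f_i\times_X\id_Y\) is a \morphLHF by \cref{lem: lrp fundamental} \ref{lem: lrp fundamental ass: bc}. For the three-point condition, given \(y_1,y_2,y_3\in Y\) and \(\ep>0\), apply the covering condition for \(\{f_i\}\) to the points \(x_k\dfn f(y_k)\in X\) with the same \(\ep\); this yields an index \(i\) and points \(u_1,u_2,u_3\in U_i\) with \(f_i(u_k)=f(y_k)\) and the required distance bounds in \(U_i\). The pairs \((u_k,y_k)\) then lie in \(U_i\times_X Y\), and the sup-metric description of the fiber product, combined with the 1-Lipschitz bound \(d_X(x_j,x_k)\leq d_Y(y_j,y_k)\), gives
\[
d_{U_i\times_X Y}\bigl((u_j,y_j),(u_k,y_k)\bigr)=\max\{d_{U_i}(u_j,u_k),d_Y(y_j,y_k)\}<d_Y(y_j,y_k)+\ep
\]
for \((j,k)\in\{(1,2),(2,3)\}\), which is the required bound.

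For assertion \ref{lem: top on Met ass: comp}, each composite \(f_i\circ g_{ij}\) is a \morphLHF by \cref{lem: lrp fundamental} \ref{lem: lrp fundamental ass: comp}. For the three-point condition, given \(x_1,x_2,x_3\in X\) and \(\ep>0\), I refine in two stages with budget \(\ep/2\) each: first apply the outer covering \(\{f_i\}\) to obtain lifts \(u_1,u_2,u_3\in U_i\) with the consecutive distances bounded by \(d_X(x_j,x_k)+\ep/2\); then apply the inner covering \(\{g_{ij}\}_{j\in J_i}\) at \(u_1,u_2,u_3\) with budget \(\ep/2\) to obtain \(v_1,v_2,v_3\in U_{ij}\) with the consecutive distances bounded by \(d_{U_i}(u_j,u_k)+\ep/2\); the triangle inequality for real numbers then yields the combined bound \(d_{U_{ij}}(v_j,v_k)<d_X(x_j,x_k)+\ep\). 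I expect the main technical subtlety to be the ``centering trick'' in assertion \ref{lem: top on Met ass: lhf cov from H-fib}: the three-point condition requires simultaneous control of three preimages inside a single (small) member of the family, and choosing the centers to be the lifts themselves is what makes this possible without any compactness argument; everything else is bookkeeping driven by \cref{lem: lrp fundamental} and the sup-metric formula for fiber products.
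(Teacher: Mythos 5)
Your proposal is correct and follows essentially the same route as the paper: condition \ref{defi: cov in Met enumi: lhf condi: lhf} is reduced to \cref{lem: lrp fundamental}, and the three-point condition is checked by taking the lifts themselves as ball centers in \ref{lem: top on Met ass: lhf cov from H-fib} (via \cref{prop-Hfib-char}), by the sup-metric formula for fiber products in \ref{lem: top on Met ass: bc}, and by chaining two \(\ep/2\)-budgets in \ref{lem: top on Met ass: comp}. The only differences are cosmetic: you spell out the total-boundedness of fiber products and the ``in particular'' clause, which the paper leaves implicit, while the paper's \(\min\{\ep,r\}\) refinement in \ref{lem: top on Met ass: lhf cov from H-fib} plays no essential role once the centers are chosen to be the lifts.
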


\begin{proof}
  % \Cref{lem: top on Met ass: id} follows immediately.
  First, we prove \cref{lem: top on Met ass: lhf cov from H-fib}.
  Let \(f:X\to Y\) be a \Hfib in \(\tbMet\) and \(r > 0\) a positive real number.
  By \cref{lem: lrp fundamental} \ref{lem: lrp fundamental ass: hf} \ref{lem: lrp fundamental ass: open imt}, for any points \(x_1,x_2,x_3\in X\), the composite \(B_X(x_1,r)\cup B_X(x_2,r)\cup B_X(x_3,r)\subset X \xto{f} Y\) is a \morphLHF.
  Hence, the family \(\mcU(f,r)\) satisfies \cref{defi: cov in Met enumi: lhf condi: lhf} of \cref{defi: cov in Met}.
  Let \(y_1,y_2,y_3\in Y\) be points, \(\ep > 0\) a positive real number, and \(x_2\in X\) a point such that \(y_2 = f(x_2)\).
  By \cref{prop-Hfib-char} ``\ref{prop-Hfib-char-Hfib}\(\Leftrightarrow\)\ref{prop-Hfib-char-ineq}'', there exist points \(x_1, x_3\in X\) such that \(f(x_1)=y_1\), \(f(x_3)=y_3\), \(d_X(x_1, x_2) < d_Y(y_1, y_2) + \min\{\ep,r\}\), and \(d_X(x_2, x_3) < d_Y(y_2, y_3) + \min\{\ep,r\}\).
  This completes the proof of \cref{lem: top on Met ass: lhf cov from H-fib}.
  \Cref{lem: top on Met ass: bc} follows immediately from \cref{lem: lrp fundamental} \ref{lem: lrp fundamental ass: bc} and \cref{defi: cov in Met}.

  Next, we prove \cref{lem: top on Met ass: comp}.
  Let \(\{f_i:U_i\to X\}_{i\in I}\), \(\{g_{ij}: U_{ij}\to U_i\}_{j\in J_i}\) (\(i\in I\)) be lsm coverings.
  By \cref{lem: lrp fundamental} \ref{lem: lrp fundamental ass: comp}, for any index \(i\in I\) and any index \(j\in J_i\), \(f_i\circ g_{ij}\) is a \morphLHF.
  Hence the family \(\{f_i\circ g_{ij}: U_{ij}\to X\}_{j\in J_i, i\in I}\) satisfies \cref{defi: cov in Met enumi: lhf condi: lhf} of \cref{defi: cov in Met}.
  Let \(x_1, x_2, x_3\in X\) be points and \(\ep > 0\) a positive real number.
  Since \(\{f_i:U_i\to X\}_{i\in I}\) is an \covlhf, there exist an index \(i\in I\) and points \(u_*\in f_i^{-1}(x_*)\) (\(*\in \{1,2,3\}\)) such that \(d_{U_i}(u_1, u_2) < d_X(x_1, x_2) + \ep/2\), and \(d_{U_i}(u_2, u_3) < d_X(x_2, x_3) + \ep/2\).
  Since \(\{g_{ij}: U_{ij}\to U_i\}_{j\in J_i}\) is an \covlhf, there exist an index \(j\in J_i\) and points \(u_*'\in g_{ij}^{-1}(u_*)\) (\(*\in\{1,2,3\}\)) such that \(d_{U_{ij}}(u_1', u_2') < d_{U_i}(u_1, u_2) + \ep/2\), and \(d_{U_{ij}}(u_2', u_3') < d_{U_i}(u_2, u_3) + \ep/2\).
  Then, it holds that
  \begin{align*}
    &d_{U_{ij}}(u_1', u_2') < d_{U_i}(u_1, u_2) + \ep/2 < d_X(x_1, x_2) + \ep, \, \text{and} \\
    &d_{U_{ij}}(u_2', u_3') < d_{U_i}(u_2, u_3) + \ep/2 < d_X(x_2, x_3) + \ep.
  \end{align*}
  This completes the proof of \cref{lem: top on Met}.
\end{proof}

\begin{definition}\label{def-lhf-top}
  For any object \(X\in\tbMet\), we write \(J_{\lhftop}(X)\) for the set of sieves on \(X\) that are generated by an lsm covering in \(\tbMet\).
  We say that \(J_{\lhftop}\) is the \textbf{lsm topology} on \(\tbMet\); note that by \cref{lem: top on Met}, the lsm topology is a Grothendieck topology in the sense of \cite[Definition 0.32]{Johnstone}.
  We write \(\tbMet_{\lhftop}\) for the site \((\tbMet, J_{\lhftop})\).
  We say that \(\tbMet_{\lhftop}\) is the \textbf{lsm site}.
  We write \(\Sh_{\lhftop}(\tbMet)\) for the category of sheaves on the site \(\tbMet_{\lhftop}\).
\end{definition}

\begin{lemma}\label{lem: lhf subcan}
  The following assertions hold:
  \begin{assertion}
    \item \label{lem: lhf subcan ass: subcan}
    For any extended pseudometric space \(X\in \epMet\), the functor \(X:\tbMet^{\op}\to \sfSet\) \confer{\cref{notation: PEmet subset PSh} \ref{notation: PEmet subset PSh enumi: full sub}} is a sheaf on the site \(\tbMet_{\lhftop}\). 
    In particular, the site \(\tbMet_{\lhftop}\) is subcanonical (that is, every representable functor is a sheaf; cf. \cite[\S 0.3]{Johnstone}).
    \item \label{lem: lhf subcan ass: hom sh}
    For any extended pseudometric spaces \(X,Y\in \epMet\), the functor 
    \begin{align*}
      \inHom_{\epMet}(X, Y): \tbMet^{\op} &\to \sfSet, \\
      Z &\mapsto \Hom_{\epMet_{/Z}}(X\times Z, Y\times Z)
    \end{align*}
    is a sheaf on the site \(\tbMet_{\lhftop}\). 
    \item \label{lem: lhf subcan ass: isom sh}
    For any extended pseudometric spaces \(X,Y\in \epMet\), the functor 
    \begin{align*}
      \inIsom_{\epMet}(X, Y): \tbMet^{\op} &\to \sfSet, \\
      Z &\mapsto \Isom_{\epMet_{/Z}}(X\times Z, Y\times Z)
    \end{align*}
    is a sheaf on the site \(\tbMet_{\lhftop}\). 
  \end{assertion}
\end{lemma}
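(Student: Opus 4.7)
The plan is to prove \cref{lem: lhf subcan ass: subcan} first, then to lift the same argument to \cref{lem: lhf subcan ass: hom sh} by parametrizing over \(X\), and finally to derive \cref{lem: lhf subcan ass: isom sh} from \cref{lem: lhf subcan ass: hom sh} by gluing inverses. In all three cases the engines are identical: the joint surjectivity of an lsm covering (\cref{rmk: cov in Met}~\ref{rmk: cov in Met enumi: surj}) produces uniqueness and a candidate gluing on underlying sets, while the two-point specialization of condition \cref{defi: cov in Met enumi: lhf condi: 3pt} (obtained by repeating a coordinate) supplies the \(1\)-Lipschitz bound.

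For \cref{lem: lhf subcan ass: subcan}, I would fix \(Z\in\tbMet\), an lsm covering \(\{f_i\colon U_i\to Z\}_{i\in I}\), and a compatible family \((g_i\colon U_i\to X)_{i\in I}\). By joint surjectivity one can unambiguously define \(g\colon\uSet{Z}\to\uSet{X}\) by \(g(z):=g_i(u)\) for any \((i,u)\) with \(f_i(u)=z\); well-definedness follows from the agreement of the \(g_i\) on \(U_i\times_Z U_j\). To verify that \(g\) is \(1\)-Lipschitz, given \(z_1,z_2\in Z\) and \(\ep>0\) I would apply the two-point form of \cref{defi: cov in Met enumi: lhf condi: 3pt} to obtain an index \(i\) and lifts \(u_1,u_2\in U_i\) with \(d_{U_i}(u_1,u_2)<d_Z(z_1,z_2)+\ep\), and conclude from
\[
  d_X(g(z_1),g(z_2))=d_X(g_i(u_1),g_i(u_2))\leq d_{U_i}(u_1,u_2)<d_Z(z_1,z_2)+\ep
\]
by letting \(\ep\to 0\). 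Uniqueness of \(g\) is immediate from joint surjectivity.

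For \cref{lem: lhf subcan ass: hom sh}, a section of \(\inHom_{\epMet}(X,Y)\) over a base \(U\) amounts, after projecting to the first factor, to a \(1\)-Lipschitz map \(\tilde\phi\colon X\times U\to Y\) with respect to the max product metric \(d((x,u),(x',u'))=d_X(x,x')\lor d_U(u,u')\) computed in \cref{lem: lim and colim in PEMet}~\ref{lem: lim and colim in PEMet ass: lim}. Given a compatible family \((\tilde\phi_i\colon X\times U_i\to Y)_{i\in I}\), I would set \(\tilde\phi(x,z):=\tilde\phi_i(x,u)\) for any lift \(u\in f_i^{-1}(z)\); well-definedness and uniqueness repeat the argument of \cref{lem: lhf subcan ass: subcan} applied fiberwise in \(X\). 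The Lipschitz estimate then reads
\[
  d_Y(\tilde\phi(x_1,z_1),\tilde\phi(x_2,z_2))\leq d_X(x_1,x_2)\lor d_{U_i}(u_1,u_2)<d_X(x_1,x_2)\lor(d_Z(z_1,z_2)+\ep),
\]
and closes with \(\ep\to 0\).

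Finally, for \cref{lem: lhf subcan ass: isom sh} I would apply \cref{lem: lhf subcan ass: hom sh} to both \(\inHom_{\epMet}(X,Y)\) and \(\inHom_{\epMet}(Y,X)\): a compatible family of isomorphisms \((\phi_i)_{i\in I}\) automatically yields a compatible family of inverses \((\phi_i^{-1})_{i\in I}\), each glues to morphisms \(\phi\) and \(\psi\) over \(Z\), and the identities \(\psi\circ\phi=\id\) and \(\phi\circ\psi=\id\) hold fiberwise on each \(U_i\), hence globally by the uniqueness clause of \cref{lem: lhf subcan ass: hom sh}. I do not expect a serious obstacle; the only mild subtlety is ensuring that the three-point condition \cref{defi: cov in Met enumi: lhf condi: 3pt} really does give the two-point lifting I am relying on (it does, by repeating a coordinate) and that set-theoretic joint surjectivity of the \(f_i\), rather than some stronger ``open surjective'' property, is what the sheaf condition demands, which is indeed the case because the Hom-set condition is itself pointwise in \(z\).
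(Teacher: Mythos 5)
Your proposal is correct and follows essentially the same route as the paper: uniqueness and the candidate gluing come from joint surjectivity of the covering, the \(1\)-Lipschitz bound comes from the (two-point specialization of the) three-point condition \cref{defi: cov in Met} \ref{defi: cov in Met enumi: lhf condi: 3pt} with \(\ep\to 0\), and \ref{lem: lhf subcan ass: isom sh} is deduced from \ref{lem: lhf subcan ass: hom sh} by gluing inverses and using uniqueness. The only cosmetic difference is that you identify relative morphisms \(X\times Z\to Y\times Z\) over \(Z\) with \(1\)-Lipschitz maps \(X\times Z\to Y\) for the max product metric, while the paper manipulates the over-\(Z\) morphisms directly; the estimates are the same.
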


\begin{proof}
  First, we prove \cref{lem: lhf subcan ass: subcan}.
  Let \(X\in \epMet\) be an object, \(T\in \tbMet\) an object, and \(\{f_i:U_i\to T\}_{i\in I}\) an lsm covering of \(T\).
  Write
  \[
    E \dfn \mathrm{Equalizer}\left(\prod_{i\in I}\Hom_{\epMet}(U_i, X) \tto \prod_{i_0,i_1\in I}\Hom_{\epMet}(U_{i_0}\times U_{i_1}, X)\right).
  \]
  Since \(T = \bigcup_{i\in I}\im(f_i)\), the natural map \(\Hom_{\epMet}(T, X)\to E\) is injective.
  Let \((g_i)_{i\in I}\in E\) be an element and \(t\in T\) a point.
  Then, there exist an index \(i\in I\) and a point \(u_i\in U_i\) such that \(f_i(u_i)=t\).
  Write \(g(t)\dfn g_i(u_i)\).
  Since \((g_i)_{i\in I}\in E\), the map of sets \(g: \uSet{T}\to \uSet{X}\) is well-defined and satisfies that \(g\circ f_i = g_i\) for each \(i\in I\).
  Let \(t_0, t_1\in T\) be points and \(\ep > 0\) a positive real number.
  By \cref{defi: cov in Met} \ref{defi: cov in Met enumi: lhf condi: 3pt}, there exist an index \(i'\in I\) and points \(u_0, u_1\in U_{i'}\) such that \(f_{i'}(u_0)=t_0\), \(f_{i'}(u_1)=t_1\), and \(d_{U_{i'}}(u_0,u_1) < d_T(t_0,t_1) + \ep\).
  Hence, it holds that
  \[
    d_X(g(t_0), g(t_1)) = d_X(g_{i'}(u_0), g_{i'}(u_1)) \leq d_{U_{i'}}(u_0,u_1) < d_T(t_0,t_1) + \ep.
  \]
  By letting \(\ep\to 0\), we conclude that \(g\) is a morphism in \(\epMet\).
  This completes the proof of \cref{lem: lhf subcan}.

  Next, we prove \cref{lem: lhf subcan ass: hom sh}. 
  Let \(\left\{p_i:U_i\to B\right\}_{i\in I}\) be an lsm covering of \(B\).
  Since \(\bigcup_{i\in I}\im(p_i) = B\), the natural map \(\Hom_{\epMet_{/B}}(X\times B, Y\times B)\to \prod_{i\in I}\Hom_{\epMet_{/U_i}}(X\times U_i, Y\times U_i)\) is injective.
  Let \((x,b)\in X\times B\) be a point and
  \[(f_i)_{i\in I}\in \prod_{i\in I}\Hom_{\epMet_{/U_i}}(X\times U_i, Y\times U_i)\]
  an element such that for each pair of indices \((i,j)\in I^2\), it holds that \(f_i\times_{U_i}\id_{U_i\times_{B}U_j} = f_j\times_{U_j}\id_{U_i\times_{B}U_j}\).
  Since \(\left\{p_i:U_i\to B\right\}_{i\in I}\) is an lsm covering, there exist an index \(i\in I\) and a point \(u_i\in U_i\) such that \(p_i(u_i)=b\).
  Then, the pair \((x,u_i)\) is a point of \(X\times U_i\).
  We define \[f(x,b)\dfn (\id_Y\times p_i)(f_i(x,u_i)).\]
  For any index \(j\in I\) and any point \(u_j\in U_j\) such that \(p_j(u_j)=b\), since \(f_i\times_{U_i}\id_{U_i\times_{B}U_j} = f_j\times_{U_j}\id_{U_i\times_{B}U_j}\), it holds that \((f_i(x,u_i), u_i, u_j) = (f_j(x,u_j),u_i,u_j)\).
  This implies that \((\id_Y\times p_i)(f_i(x,u_i)) = (\id_Y\times p_j)(f_j(x,u_j))\).
  In particular, the map \(f:X\times B\to Y\times B\) is well-defined.

  Let \((x_1,b_1), (x_2,b_2)\in X\times B\) be points and \(\ep > 0\) a positive real number.
  Since \(\left\{p_i:U_i\to B\right\}_{i\in I}\) is an lsm covering, there exist an index \(i\in I\) and points \(u_1,u_2\in U_i\) such that \(b_1=p_i(u_1)\), \(b_2=p_i(u_2)\), and \(d_{U_i}(u_1,u_2) < d_B(b_1,b_2) + \ep\).
  Then, it holds that
  \begin{align*}
    d_{Y\times B}(f(x_1, b_1), f(x_2, b_2))
    &\leq d_{Y\times U_i}(f_i(x_1, u_1), f_i(x_2, u_2)) \\
    &\leq d_{X\times U_i}((x_1, u_1), (x_2, u_2)) \\
    &= d_{(X\times B)\times_B U_i}((x_1, b_1, u_1), (x_2, b_2, u_2)) \\
    &= \max\left\{d_{X\times B}((x_1,b_1), (x_2, b_2)), d_{U_i}(u_1,u_2)\right\} \\
    &< d_{X\times B}((x_1,b_1), (x_2, b_2)) + \ep.
  \end{align*}
  By letting \(\ep\to 0\), we conclude that \(d_{Y\times B}(f(x_1, b_1), f(x_2, b_2)) \leq d_{X\times B}((x_1,b_1), (x_2, b_2))\).
  This implies that \(f\) is a morphism in \(\epMet\).
  By the definition of \(f\), one may verify immediately that for any \(i\in I\), \(f_i = f\times \id_{U_i}\).
  This completes the proof of \cref{lem: lhf subcan ass: hom sh}. 

  Finally, \cref{lem: lhf subcan ass: isom sh} follows immediately from \cref{lem: lhf subcan ass: hom sh}. 
  This completes the proof of \cref{lem: lhf subcan}. 
\end{proof}

\begin{remark}
  Conversely, if a Grothendieck topology \(J\) on \(\tbMet\) is subcanonical, then one can easily prove that for any object \(X\in\tbMet\), any points \(x,x'\in X\), any covering sieve \(\mcS\in J(X)\), and any positive real number \(\ep > 0\), there exist an object \(Y\in \tbMet\), a morphism \(f:Y\to X\) in \(\mcS(Y)\), and points \(y,y'\in Y\) such that \(f(y)=x\), \(f(y')=x'\), and \(d_Y(y,y') < d_X(x,x') + \ep\). 
\end{remark}

Next, we verify that any descent datum with respect to the lsm topology is effective.

\begin{lemma}\label{lem: eff descent}
  Let \(X\in \tbMet\) be an object,
  \(\{f_i:U_i\to X\}_{i\in I}\) an lsm covering of \(X\),
  \((p_i:P_i\to U_i)_{i\in I}\) a family of morphisms in \(\epMet\), and
  \[
    (\varphi_{ij}:P_i\times_{p_i, U_i}(U_i\times_XU_j)\tosim P_j\times_{p_j, U_j}(U_i\times_XU_j))_{(i,j)\in I^2}
  \]
  a family of isomorphisms in \(\epMet\).
  Assume that the pair \(((p_i)_{i\in I}, (\varphi_{ij})_{(i,j)\in I^2})\) satisfies the following conditions:
  \begin{condition}
    \item \label{lem: eff descent condi: 1}
    For any pair of indices \((i,j)\in I^2\), \(\varphi_{ij}\) is an isomorphism in \(\epMet_{/U_i\times_XU_j}\).
    \item \label{lem: eff descent condi: cocycle}
    For any triple of indices \((i,j,k)\in I^3\),
    \[(\varphi_{jk}\times_{U_j\times_XU_k}\id_{U_i\times_XU_j\times_XU_k})\circ (\varphi_{ij}\times_{U_i\times_XU_j}\id_{U_i\times_XU_j\times_XU_k}) = \varphi_{ik}\times_{U_i\times_XU_k}\id_{U_i\times_XU_j\times_XU_k}.\]
  \end{condition}
  Then, the following assertions hold:
  \begin{assertion}
    \item \label{lem: eff descent ass: eff}
    There exist a morphism \(p:P\to X\) in \(\epMet\) and a family of isomorphisms \[(\varphi_i:P_i\tosim P\times_{p,X}U_i)_{i\in I}\] in \(\epMet\) such that for each index \(i\in I\), \(\varphi_i\) is an isomorphism in \(\epMet_{/U_i}\), and, moreover, for any pair of indices \((i,j)\in I^2\), \((\varphi_i\times_{U_i}\id_{U_i\times_XU_j}) \circ\varphi_{ij} = \varphi_j\times_{U_j}\id_{U_i\times_XU_j}\).
    \item \label{lem: eff descent ass: met}
    If \((p_i:P_i\to U_i)_{i\in I}\) is a family of morphisms in \(\eMet\) (resp. in \(\pMet\), in \(\Met\)), then \(P\in\eMet\) (resp. \(P\in\pMet\), \(P\in\Met\)). 
    \item \label{lem: eff descent ass: colim}
    \(P\) is isomorphic as an object of \(\epMet_{/X}\) to the colimit of the diagram in \(\epMet_{/X}\) determined by the pair \(((p_i)_{i\in I}, (\varphi_{ij})_{(i,j)\in I^2})\).
  \end{assertion}
\end{lemma}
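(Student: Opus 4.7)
The plan is to construct \(P\) as a coequalizer in \(\epMet\) and verify the descent isomorphisms via the three-point lifting property of lsm coverings. Let \(P\) be the coequalizer in \(\epMet\) of the diagram
\[\coprod_{(i,j)\in I^2} P_i\times_{U_i}(U_i\times_X U_j) \rightrightarrows \coprod_{i\in I} P_i,\]
where one arrow is the natural projection to \(P_i\) and the other is \(\varphi_{ij}\) composed with the projection to \(P_j\); this coequalizer exists by \cref{lem: lim and colim in PEMet} \ref{lem: lim and colim in PEMet ass: colim}, which also supplies the explicit chain formula for \(d_P\). Since each \(\varphi_{ij}\) lies over \(U_i\times_X U_j\), the family \((f_i\circ p_i)_{i\in I}\) coequalizes the two arrows and so induces a morphism \(p:P\to X\). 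Writing \(\alpha_i:P_i\to P\) for the canonical morphism, set \(\varphi_i\dfn (\alpha_i,p_i):P_i\to P\times_X U_i\); this is tautologically a \(1\)-Lipschitz morphism in \(\epMet_{/U_i}\), and assertion \ref{lem: eff descent ass: colim} is immediate from the construction.

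Next, I show that each \(\varphi_i\) is a bijection on underlying sets. The cocycle condition specialized to \(i=j=k\) forces \(\varphi_{ii}\circ\varphi_{ii}=\varphi_{ii}\), hence \(\varphi_{ii}=\id_{P_i}\); using the cocycle again, any zigzag of identifications in the coequalizer between elements of \(P_i\) lying over the same base point of \(U_i\) collapses to a single application of \(\varphi_{ii}=\id\), giving injectivity. For surjectivity, \(\coprod_j U_j\to X\) is surjective by \cref{rmk: cov in Met} \ref{rmk: cov in Met enumi: surj}, so given \((z,u)\in P\times_X U_i\) we may write \(z=\alpha_j(y)\) for some \(y\in P_j\), and then \(\varphi_{j,i}\) over the point \((u,p_j(y))\in U_i\times_X U_j\) produces \(x\in P_i\) with \(\varphi_i(x)=(z,u)\).

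The main obstacle is showing that \(\varphi_i^{-1}\) is \(1\)-Lipschitz, i.e.,
\[d_{P_i}(x_1,x_2)\leq \max\bigl(d_P(\alpha_i(x_1),\alpha_i(x_2)),\ d_{U_i}(p_i(x_1),p_i(x_2))\bigr).\]
Given \(\epsilon>0\), fix a chain \((y_k,y_k')_{k=0}^{N}\) in the \(P_{j_k}\)'s whose total length is within \(\epsilon\) of \(d_P(\alpha_i(x_1),\alpha_i(x_2))\); its projection via \(f_{j_k}\circ p_{j_k}\) is a chain \((w_k,w_k')\) in \(X\) with \(w_k'=w_{k+1}\), \(w_0=f_i(p_i(x_1))\), and \(w_N'=f_i(p_i(x_2))\). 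The strategy is to iterate \cref{defi: cov in Met} \ref{defi: cov in Met enumi: lhf condi: 3pt} together with the local submetry property of \(f_i\) to relay the projected chain into a chain \((\tilde u_k,\tilde u_k')\) in \(U_i\) with \(\tilde u_0=p_i(x_1)\), \(\tilde u_N'=p_i(x_2)\), \(\tilde u_k'=\tilde u_{k+1}\), and \(d_{U_i}(\tilde u_k,\tilde u_k')\leq d_{P_{j_k}}(y_k,y_k')+\epsilon/N\); then transfer each segment \((y_k,y_k')\) via \(\varphi_{j_k,i}\) over the points \((\tilde u_k,p_{j_k}(y_k))\) and \((\tilde u_k',p_{j_k}(y_k'))\) of \(U_i\times_X U_{j_k}\) to a segment \((\tilde y_k,\tilde y_k')\in P_i\), invoking the distance identity \(\max(d_{P_i},d_{U_{j_k}})=\max(d_{P_{j_k}},d_{U_i})\) forced by \(\varphi_{j_k,i}\) being an isometry in \(\epMet_{/U_i\times_X U_{j_k}}\). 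Connectivity \(\tilde y_k'=\tilde y_{k+1}\) follows from the injectivity of \(\varphi_i\) established above; summing by the triangle inequality in \(P_i\) and letting \(\epsilon\to 0\) yields the bound. The delicate step is the relaying: the three-point property lifts triples to some \(U_{i''}\) which need not equal \(U_i\), so it must be interleaved with the local submetry of \(f_i\) to bring intermediate base points back into \(U_i\). This interplay is the technical core of the argument.

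Finally, assertion \ref{lem: eff descent ass: met} is a routine consequence of \ref{lem: eff descent ass: eff}. For the \(\eMet\) case, \(d_P(\bar x,\bar y)=0\) implies \(p(\bar x)=p(\bar y)\); choosing any \(i\) and \(u\in f_i^{-1}(p(\bar x))\), the points \(x\dfn\varphi_i^{-1}(\bar x,u)\) and \(y\dfn\varphi_i^{-1}(\bar y,u)\) in \(P_i\) satisfy \(d_{P_i}(x,y)=0\), forcing \(x=y\) and hence \(\bar x=\bar y\). For the \(\pMet\) case, applying \cref{defi: cov in Met} \ref{defi: cov in Met enumi: lhf condi: 3pt} to the triple \((p(\bar x),p(\bar y),p(\bar y))\) locates a single index \(k\) into which both \(\bar x\) and \(\bar y\) lift via \(\varphi_k^{-1}\), giving \(d_P(\bar x,\bar y)\leq d_{P_k}<\infty\). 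The \(\Met\) case is the conjunction.
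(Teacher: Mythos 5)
Your construction of \(P\) as the coequalizer carrying the chain metric of \cref{lem: lim and colim in PEMet}~\ref{lem: lim and colim in PEMet ass: colim}, the identification of underlying sets, assertion \ref{lem: eff descent ass: colim}, and assertion \ref{lem: eff descent ass: met} (granted the core inequality) are all reasonable. But the step you yourself call the technical core --- the inequality \(d_{P_i}(x_1,x_2)\leq\max\bigl(d_P(\alpha_i(x_1),\alpha_i(x_2)),\,d_{U_i}(p_i(x_1),p_i(x_2))\bigr)\) for the \emph{chain} metric \(d_P\) --- is not proved, and the relaying strategy you sketch for it cannot work. To transfer the \(k\)-th link of the chain into \(P_i\) via \(\varphi_{j_k,i}\) you need a point of \(U_i\times_X U_{j_k}\), i.e.\ a point of \(U_i\) lying over the same point \(w_k\in X\) as \(p_{j_k}(y_k)\); so your relayed chain \((\tilde u_k,\tilde u_k')\) is forced to lie over the projected chain in \(X\). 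A single member \(U_i\) of an lsm covering, however, need not meet the fibers over the intermediate points \(w_k\) at all: condition \ref{defi: cov in Met enumi: lhf condi: 3pt} of \cref{defi: cov in Met} constrains the \emph{family}, not each member, and typical members are small (e.g.\ unions of three balls, as in \cref{lem: top on Met}~\ref{lem: top on Met ass: lhf cov from H-fib}). The local submetry property of \(f_i\) only lifts short segments starting at points of \(U_i\) and cannot produce preimages of points outside \(\im(f_i)\), so no ``interleaving'' rescues the relay. (A smaller issue: the cocycle condition with \(i=j=k\) forces \(\varphi_{ii}\) to be the identity only over the diagonal of \(U_i\times_XU_i\), not \(\varphi_{ii}=\id_{P_i}\); luckily only the diagonal restriction is needed for injectivity of \(\varphi_i\) over a fixed \(u\in U_i\).)

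The missing idea --- and the route the paper takes --- is to avoid chains altogether. Define \(d_P(x_0,x_1)\) as the \emph{one-step} infimum of \(d_{P_j}(a_0,a_1)\) over all \(j\) and all representatives \(a_0,a_1\in P_j\) of \(x_0,x_1\); prove the purely formal ``compare'' claim that if \(x_0^i,x_1^i\in P_i\) and \(x_0^j,x_1^j\in P_j\) represent the same two points, then \(d_{P_i}(x_0^i,x_1^i)\leq\max\{d_{P_j}(x_0^j,x_1^j),\,d_{U_i}(p_i(x_0^i),p_i(x_1^i))\}\) --- this uses only that \(\varphi_{ij}\) is an isometry over \(U_i\times_XU_j\), where the fiber-product distance is a maximum, and needs no passage over intermediate base points; and then use condition \ref{defi: cov in Met enumi: lhf condi: 3pt} exactly once, to place any triple of points of \(P\) into a single chart \(P_i\) with base distances in \(U_i\) within \(\ep\) of the corresponding distances in \(X\). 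The compare claim turns this into the triangle inequality for the one-step \(d_P\); that in turn identifies the one-step metric with your chain (colimit) metric, and the compare claim plus the infimum gives the desired equality \(d_{P_i}(x_0,x_1)=\max\{d_P(g_i(x_0),g_i(x_1)),\,d_{U_i}(p_i(x_0),p_i(x_1))\}\), i.e.\ that each \(\varphi_i\) is an isometric bijection. Until you either prove your core inequality by some such device or show directly that the chain metric coincides with the one-step metric, your argument has a genuine gap at its central step.
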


\begin{proof}
  First, we prove \cref{lem: eff descent ass: eff}.
  For any indices \(i,j\in I\) and any points \(x_i\in P_i\), \(x_j\in P_j\), we define
  \[
    (i, x_i)\sim (j, x_j) \, \deff\,
    \left\{\begin{aligned}
      &\bullet \ f_i(p_i(x_i)) = f_j(p_j(x_j)), \ \ \text{and} \\
      &\bullet \ \varphi_{ij}(x_i, p_i(x_i), p_j(x_j)) = (x_j, p_i(x_i), p_j(x_j)).
    \end{aligned}\right.
  \]
  Then, by \cref{lem: eff descent condi: cocycle}, the relation \(\sim\) is an equivalence relation on the set \(\coprod_{i\in I}\uSet{P_i}\).
  We define \(\uSet{P}\dfn \left(\coprod_{i\in I}\uSet{P_i}\right)/\sim\).
  Write \(p:\uSet{P}\to \uSet{X}\) for the map of sets induced by \(\coprod_{i\in I}f_i\circ p_i:\coprod_{i\in I}\uSet{P_i}\to \uSet{X}\).
  For any \(i\in I\), write \(g_i:\uSet{P_i}\to \uSet{P}\) for the natural map.
  Then, by the definition of \(\uSet{P}\) and the equivalence relation \(\sim\), for any index \(i\in I\) and any point \(u\in U_i\), \(g_i|_{p_i^{-1}(u)}:p_i^{-1}(u)\to \uSet{P}\) induces a bijection \(p_i^{-1}(u)\tosim p^{-1}(f_i(u))\).
  This implies that for any index \(i\in I\), the following diagram of sets is Cartesian:
  \begin{equation}
    \label{lem: eff descent eq: diag}
    \begin{tikzcd}
      {\uSet{P_i}} \ar[r, "{g_i}"] \ar[d, "{p_i}"] & {\uSet{P}}\ar[d, "{p}"] \\
      {\uSet{U_i}} \ar[r, "{f_i}"] & {\uSet{X}}.
    \end{tikzcd}
    \tag{\(\dagger\)}
  \end{equation}
  
  Next, we prove the following claim:
  \begin{claim}\label{lem: eff descent claim: compare}
    Let \(i,j\in I\) be indices; \(x_0^i,x_1^i\in P_i\) and \(x_0^j,x_1^j\in P_j\) points.
    Assume that \((i,x_0^i)\sim (j, x_0^j)\) and that \((i,x_1^i)\sim (j, x_1^j)\).
    Then, it holds that
    \[d_{P_i}(x_0^i,x_1^i) \leq \max\{d_{P_j}(x_0^j, x_1^j), d_{U_i}(p_i(x_0^i), p_i(x_1^i))\}.\]
  \end{claim}
  \noindent
  By the definition of the relation \(\sim\), for each \(\bullet\in\{0,1\}\), it holds that \(\varphi_{ij}(x_{\bullet}^i, p_i(x_{\bullet}^i), p_j(x_{\bullet}^j)) = (x_{\bullet}^j, p_i(x_{\bullet}^i), p_j(x_{\bullet}^j))\).
  Hence,
  \begin{align*}
    d_{P_i}(x_0^i,x_1^i)
    &\leq d_{P_i\times_{p_i,U_i}(U_i\times_XU_j)}((x_0^i, p_i(x_0^i), p_j(x_0^j)), (x_1^i, p_i(x_1^i), p_j(x_1^j))) \\
    &= d_{P_j\times_{p_j,U_j}(U_i\times_XU_j)}((x_0^j, p_i(x_0^i), p_j(x_0^j)), (x_1^j, p_i(x_1^i), p_j(x_1^j))) \\
    &= \max\{d_{P_j}(x_0^j, x_1^j), d_{U_i}(p_i(x_0^i), p_i(x_1^i)), d_{U_j}(p_j(x_0^j), p_j(x_1^j))\} \\
    &= \max\{d_{P_j}(x_0^j, x_1^j), d_{U_i}(p_i(x_0^i), p_i(x_1^i))\}.
  \end{align*}
  This completes the proof of \cref{lem: eff descent claim: compare}.

  Let us continue to prove \cref{lem: eff descent}.
  For any points \(x_0,x_1\in \uSet{P}\), we define
  \[
    d_P(x_0,x_1)\dfn \inf\left\{d_{P_i}(a_0,a_1)\,\middle|\,i\in I, a_0,a_1\in P_i,\, g_i(a_0) = x_0,\, g_i(a_1) = x_1\right\}.
  \]
  Then, it holds that \(d_P(x_0, x_1) = d_P(x_1, x_0)\) for any points \(x_0,x_1\in \uSet{P}\).
  To prove that \((\uSet{P}, d_P)\in \epMet\), it suffices to prove that \(d_P\) satisfies the triangle inequality.
  Let \(x_0,x_1,x_2\in \uSet{P}\) be points and \(\ep > 0\) a positive real number.
  Since the diagram \cref{lem: eff descent eq: diag} is Cartesian, it follows from \cref{defi: cov in Met} \ref{defi: cov in Met enumi: lhf condi: 3pt} that there exist an index \(i\in I\) and points \(x_0^i\in g_i^{-1}(x_0)\), \(x_1^i\in g_i^{-1}(x_1)\), \(x_2^i\in g_i^{-1}(x_2)\) such that \(d_{U_i}(p_i(x_0^i), p_i(x_1^i)) < d_X(p(x_0), p(x_1)) + \ep\), and \(d_{U_i}(p_i(x_1^i), p_i(x_2^i)) < d_X(p(x_1), p(x_2)) + \ep\).
  Then, by \cref{lem: eff descent claim: compare}, for all indices \(\lozenge, \blacklozenge\in \{0,1,2\}\), and index \(j\in I\), and any points \(x_{\lozenge}^j, x_{\blacklozenge}^j\in P_j\) such that \((i,x_{\lozenge}^i) \sim (j,x_{\lozenge}^j)\) and \((i,x_{\blacklozenge}^i) \sim (j,x_{\blacklozenge}^j)\), it holds that
  \begin{align*}
    d_{P_i}(x_{\lozenge}^i, x_{\blacklozenge}^i)
    &\leq \max\left\{ d_{P_j}(x_{\lozenge}^j, x_{\blacklozenge}^j), d_{U_i}(p_i(x_{\lozenge}^i), p_i(x_{\blacklozenge}^i))\right\} \\
    &\leq \max\left\{ d_{P_j}(x_{\lozenge}^j, x_{\blacklozenge}^j), d_X(p(x_{\lozenge}), p(x_{\blacklozenge})) + \ep \right\} \\
    &\leq \max\left\{ d_{P_j}(x_{\lozenge}^j, x_{\blacklozenge}^j), d_X(p(x_{\lozenge}), p(x_{\blacklozenge}))\right\} + \ep  \\
    &\leq d_{P_j}(x_{\lozenge}^j, x_{\blacklozenge}^j) + \ep.
  \end{align*}
  In particular, for all indices \(\lozenge, \blacklozenge\in \{0,1,2\}\), \(d_{P_i}(x_{\lozenge}^i, x_{\blacklozenge}^i) \leq d_P(x_{\lozenge}, x_{\blacklozenge}) + \ep\).
  Thus,
  \[
    d_P(x_0,x_2) \leq d_{P_i}(x_0^i, x_2^i) \leq d_{P_i}(x_0^i, x_1^i) + d_{P_i}(x_1^i, x_2^i) \leq d_P(x_0,x_1) + d_P(x_1,x_2) + 2\ep.
  \]
  By letting \(\ep \to 0\), we conclude that \(d_P\) satisfies the triangle inequality.
  Write \(P\dfn (\uSet{P},d_P)\).

  Let \(x_0,x_1\in P\) be points.
  Then, it holds that
  \begin{align*}
    d_P(x_0,x_1) &= \inf\left\{d_{P_i}(a_0,a_1)\,\middle|\,i\in I, \ a_0\in g_i^{-1}(x_0), \ a_1\in g_i^{-1}(x_1) \right\} \\
    &\geq \inf\left\{d_{U_i}(p_i(a_0),p_i(a_1))\,\middle|\,i\in I, \ a_0\in g_i^{-1}(x_0), \ a_1\in g_i^{-1}(x_1) \right\} \\
    &\geq \inf\left\{d_X(f_i(p_i(a_0)),f_i(p_i(a_1)))\,\middle|\,i\in I, \ a_0\in g_i^{-1}(x_0), \ a_1\in g_i^{-1}(x_1) \right\} \\
    &= d_X(p(x_0), p(x_1)).
  \end{align*}
  This implies that \(p: P\to X\) is a morphism in \(\epMet\).

  By the definition of the distance \(d_P\), for any index \(i\in I\), \(g_i:P_i\to P\) is a morphism in \(\epMet\).
  This implies that for any index \(i\in I\), the natural bijection \(\varphi_i:P_i\to P\times_{p,X}U_i\) induced by the diagram \cref{lem: eff descent eq: diag} is a morphism in \(\epMet\).
  For any index \(i\in I\) and any points \(x_0,x_1\in P_i\), it follows from \cref{lem: eff descent claim: compare} that
  \[d_{P_i}(x_0,x_1) = \max\{d_P(g_i(x_0),g_i(x_1)), d_{U_i}(p_i(x_0),p_i(x_1))\}.\]
  Thus, for any index \(i\in I\), \(\varphi_i:P_i\to P\times_{p,X}U_i\) is an isomorphism in \(\epMet\).
  Furthermore, for any indices \(i,j\in I\), it follows from the definition of the equivalence relation \(\sim\) that \((\varphi_i\times_{U_i}\id_{U_i\times_XU_j})\circ \varphi_{ij} = \varphi_j\times_{U_j}\id_{U_i\times_XU_j}\).
  This completes the proof of \cref{lem: eff descent ass: eff}.

  Next, we prove \cref{lem: eff descent ass: met}.
  If \((p_i:P_i\to U_i)_{i\in I}\) is a family of morphisms in \(\pMet\), then by \cref{defi: cov in Met} \ref{defi: cov in Met enumi: lhf condi: 3pt}, for any points \(x_0,x_1\in P\), \(d_P(x_0,x_1) = d_P(x_0,x_1) < \infty\). 
  Hence, \(P\in \pMet\).
  Assume that \((p_i:P_i\to U_i)_{i\in I}\) is a family of morphisms in \(\eMet\).
  Let \(x_0,x_1\in P\) be points such that \(d_P(x_0, x_1)=0\).
  Since \(X\in\tbMet\), \(p(x_0) = p(x_1)\). 
  % In particular, if \(d_P(x_4,x_5)=0\), then \(p(x_4)=p(x_5)\).
  Since the diagram \eqref{lem: eff descent eq: diag} is Cartesian, there exist an index \(i\in I\) and points \(x_0^i, x_1^i\in P_i\) such that \(g_i(x_0^i) = x_0\), \(g_i(x_1^i) = x_1\), and \(p_i(x_0^i) = p_i(x_1^i)\).
  Then, \(d_{P_i}(x_0^i, x_1^i)=0\). 
  Since \(P_i\in\eMet\), \(x_0^i=x_1^i\).
  Thus, \(x_0=g_i(x_0^i)=g_i(x_1^i)=x_1\). 
  In particular, \(P\in\eMet\). 
  This completes the proof of \cref{lem: eff descent ass: met}.

  Finally, \cref{lem: eff descent ass: colim} follows immediately from \cref{lem: lim and colim in PEMet} \ref{lem: lim and colim in PEMet ass: colim}, \cref{defi: cov in Met} \ref{defi: cov in Met enumi: lhf condi: 3pt}, and \cref{lem: eff descent ass: eff}.
  This completes the proof of \cref{lem: eff descent}.
\end{proof}

\begin{definition}\label{def-stack-all-met}
  We write \(\stAllMet\) for the category fibered in groupoids (cf.~\cite[\href{https://stacks.math.columbia.edu/tag/003T}{Tag 003T}]{stacks-project}) over \(\tbMet\), defined as follows:
  \begin{itemize}
    \item 
    An object of \(\stAllMet\) is a morphism \(p:X\to T\) in \(\Met\) such that \(T\in \tbMet\).
    \item 
    A morphism \((f_d, f_c)\) from \(p_1:X_1\to T_1\) to \(p_2:X_2\to T_2\) in \(\stAllMet\) is a commutative diagram 
    \[\begin{tikzcd}
      {X_1} \ar[r, "{f_d}"] \ar[d, "{p_1}"] & {X_2} \ar[d, "{p_2}"] \\
      {T_1} \ar[r, "{f_c}"] & {T_2}
    \end{tikzcd}\]
    in \(\Met\) such that the induced morphism \(X_1\to X_2\times_{T_2}T_1\) is an isomorphism.
    \item 
    Composition in \(\stAllMet\) is defined in the evident way from composition in \(\Met\).
  \end{itemize}
  We define 
  \begin{align*}
    \cod:\stAllMet &\to \tbMet, \\
    (p:X\to T) &\mapsto T, \\
    (f_d, f_c) &\mapsto f_c.
  \end{align*}
\end{definition}

\begin{corollary}\label{cor: Mor is a stack}
  \(\cod:\stAllMet\to \tbMet, [P\to X]\mapsto X\) is a stack in groupoids (cf. \cite[\href{https://stacks.math.columbia.edu/tag/02ZI}{Tag 02ZI}]{stacks-project}) over \(\tbMet_{\lhftop}\).
\end{corollary}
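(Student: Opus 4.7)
The plan is to verify the stack axioms beyond the category-fibered-in-groupoids structure. The CFG structure itself is essentially built into \cref{def-stack-all-met}: fiber products of metric spaces exist by \cref{lem: lim and colim in PEMet} \ref{lem: lim and colim in PEMet ass: lim}, morphisms of $\stAllMet$ are declared to be Cartesian squares, and the fibers over objects of $\tbMet$ are consequently groupoids. What remains is to check gluing of morphisms along lsm coverings and effectivity of descent data for objects.

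For the gluing of morphisms, I would fix $T \in \tbMet$ and two objects $p : X \to T$, $q : Y \to T$ of $\stAllMet_T$, and show directly that the presheaf on $\tbMet_{/T}$ sending $(Z \to T)$ to $\Isom_{\epMet_{/Z}}(X \times_T Z,\, Y \times_T Z)$ is a sheaf. This is an almost verbatim adaptation of the arguments in \cref{lem: lhf subcan} \ref{lem: lhf subcan ass: hom sh}, \ref{lem: lhf subcan ass: isom sh}: given an lsm covering $\{g_i : V_i \to Z\}$ in $\tbMet_{/T}$ and a matching family $(\psi_i : X \times_T V_i \tosim Y \times_T V_i)$, one defines the glued map on the underlying set by lifting points of $Z$ through the covering, obtains well-definedness from the matching condition, and obtains the $1$-Lipschitz property from condition \ref{defi: cov in Met enumi: lhf condi: 3pt} of \cref{defi: cov in Met}. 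The only modification is the replacement of direct products $X \times Z$ by fiber products $X \times_T Z$, which passes through because lsm coverings are stable under base change by \cref{lem: top on Met} \ref{lem: top on Met ass: bc}.

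For effective descent, given descent data $((p_i : P_i \to U_i)_{i \in I}, (\varphi_{ij})_{(i,j) \in I^2})$ in $\Met$ along an lsm covering $\{f_i : U_i \to T\}$, I would invoke \cref{lem: eff descent} to produce a morphism $p : P \to T$ in $\epMet$ together with compatible isomorphisms $\varphi_i : P_i \tosim P \times_T U_i$ satisfying the required cocycle identity. By \cref{lem: eff descent} \ref{lem: eff descent ass: met}, $P$ is in fact a metric space since each $P_i$ is, so $(p : P \to T)$ is an object of $\stAllMet_T$ realizing the datum. The main technical work is thus the Isom-sheaf condition, but as indicated above this is a routine reformulation of an already established proof, so no essentially new idea is required.
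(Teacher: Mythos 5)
Your proposal is correct and takes essentially the same route as the paper, whose proof simply combines the Isom-sheaf statement \cref{lem: lhf subcan} \ref{lem: lhf subcan ass: isom sh} with effectivity of descent \cref{lem: eff descent} \ref{lem: eff descent ass: eff} \ref{lem: eff descent ass: met}. The only difference is that you explicitly carry out the fiber-product-over-\(T\) adaptation of the Isom-sheaf argument (which is indeed what the stack condition requires), whereas the paper treats this relative version as immediate from its absolute-product lemma.
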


\begin{proof}
  \Cref{cor: Mor is a stack} follows immediately from \cref{lem: lhf subcan} \ref{lem: lhf subcan ass: isom sh} and \cref{lem: eff descent} \ref{lem: eff descent ass: eff} \ref{lem: eff descent ass: met}.
\end{proof}

\begin{remark}
  Conversely, if a Grothendieck topology \(J\) on \(\tbMet\) makes the category fibered in groupoids \(\stAllMet\to \tbMet\) a stack on \(\tbMet\) with respect to \(J\), then one can easily prove that for any object \(X\in\tbMet\), any points \(x_1,x_2,x_3\in X\), any covering sieve \(\mcS\in J(X)\), and any positive real number \(\ep > 0\), there exist an object \(Y\in \tbMet\), a morphism \(f:Y\to X\) in \(\mcS(Y)\), and points \(y_1,y_2,y_3\in Y\) such that \(f(y_i)=x_i\) (\(\forall i\in\{1,2,3\}\)), \(d_Y(y_1,y_2) < d_X(x_1,x_2) + \ep\), and \(d_Y(y_2,y_3) < d_X(x_2,x_3) + \ep\). 
\end{remark}

\section{Naive Metric Stack}
\label{section: naive metric stack}

In this section, we develop the basic theory of stacks over the site \(\tbMet_{\lhftop}\). 
We first introduce \textbf{naive metric stacks} as a class of stacks on \(\tbMet_{\lhftop}\) and establish their fundamental properties.
We then define good moduli spaces as morphisms to extended pseudometric spaces satisfying an appropriate universality property, and investigate their structural features.
This notion is the metric-space-theoretic analogue of good moduli spaces for algebraic stacks in algebraic geometry \confer{\cite{Alper-good-moduli}}; 
however, in contrast to the algebraic setting, every stack in the metric context admits a good moduli space \confer{\cref{prop-epmet-into-stacks-has-left-adj} \ref{prop-epmet-into-stacks-has-left-adj-ep}}.
Finally, we study conditions under which a good moduli space is separated.

\begin{notation}\label{notation-stacks}
  \ 
  \begin{enumerate}
    \item 
    We write \(\Stack_{\lhftop}(\tbMet)\) for the 2-category of stacks in groupoids over \(\tbMet_{\lhftop}\) and \(\Sh_{\lhftop}(\tbMet)\) for the category of sheaves on \(\tbMet_{\lhftop}\).
    For any stack in groupoids \(\mcX\) over \(\tbMet_{\lhftop}\) and any object \(T\in \epMet\), by a slight abuse of notation, we write \(\mcX(T)\dfn \Hom_{\Stack_{\lhftop}(\tbMet)}(T,\mcX)\). 
    By the 2-Yoneda lemma \confer{\cite[\href{https://stacks.math.columbia.edu/tag/0GWI}{Tag 0GWI}]{stacks-project}}, we often identify \(\mcX(T)\) with the fiber of \(\mcX\to \tbMet_{\lhftop}\) at \(T\in\tbMet_{\lhftop}\).
    Note that \(\mcX(T)\) is a groupoid.
    For any morphism \(f:\mcX\to \mcY\) of stacks in groupoids over \(\tbMet_{\lhftop}\) and any object \(T\in \tbMet\), we write \(f_T:\mcX(T)\to \mcY(T)\) for the induced morphism of groupoids.
    We write \(\pi_0:\Stack_{\lhftop}(\tbMet)\to \Sh_{\lhftop}(\tbMet)\) for the functor that assigns to each stack its sheaf of path connected components; 
    that is, for any object \(\mcX\in\Stack_{\lhftop}(\tbMet)\), \(\pi_0(\mcX)\) is the sheafification of the presheaf sending an object \(U\in\tbMet\) to the set of isomorphism classes of the groupoid \(\mcX(U)\). 
    \item 
    By a slight abuse of notation, for any sheaf \(\mcF\) on the site \(\tbMet_{\lhftop}\), we also write \(\mcF\) for the stack in groupoids over \(\tbMet_{\lhftop}\) obtained by applying the Grothendieck construction \confer{\cite[\href{https://stacks.math.columbia.edu/tag/02Y2}{Tag 02Y2}]{stacks-project}} to \(\mcF\). 
    By a further abuse of notation, we regard \(\Sh_{\lhftop}(\tbMet)\subset \Stack_{\lhftop}(\tbMet)\) as a full subcategory. 
    \item 
    Let \(\mcX, \mcY, \mcZ\) be stacks in groupoids over \(\tbMet_{\lhftop}\); \(f,f':\mcX\tto \mcY\) and \(g,g':\mcY\tto \mcZ\) morphisms of stacks in groupoids over \(\tbMet_{\lhftop}\); \(\alpha:f\to f'\) and \(\beta:g\to g'\) natural transformations.
    We write \(\beta\star \alpha\) for the horizontal composition of \(\alpha\) and \(\beta\). 
    By a slight abuse of notation, we write \(g\star \alpha\dfn \id_g\star \alpha\) and \(\beta\star f\dfn \beta \star \id_f\). 
    \item \label{notation-stacks-2-prod}
    Let \(f:\mcX\to \mcY \gets \mcZ: g\) be morphisms of stacks in groupoids over \(\tbMet_{\lhftop}\). 
    By a slight abuse of notation, we write \(\mcX\times_{\mcY}\mcZ\) for the 2-fiber product \confer{\cite[\href{https://stacks.math.columbia.edu/tag/003O}{Tag 003O}]{stacks-project}} of the diagram \(f:\mcX\to \mcY \gets \mcZ: g\).
  \end{enumerate}
\end{notation}

\begin{lemma}\label{prop-epmet-into-stacks-has-left-adj}
  \ 
  \begin{assertion}
    \item \label{prop-epmet-into-stacks-has-left-adj-sh}
    The functor \(\pi_0:\Stack_{\lhftop}(\tbMet)\to \Sh_{\lhftop}(\tbMet)\) is a left adjoint to the inclusion functor \(\Sh_{\lhftop}(\tbMet)\subset \Stack_{\lhftop}(\tbMet)\). 
    \item \label{prop-epmet-into-stacks-has-left-adj-ep}
    The functor \(\mu\circ \pi_0:\Stack_{\lhftop}(\tbMet)\to \epMet\) is a left adjoint to the inclusion functor \(\epMet \subset \Sh_{\lhftop}(\tbMet)\subset \Stack_{\lhftop}(\tbMet)\). 
  \end{assertion}
\end{lemma}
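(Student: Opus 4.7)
The plan is to prove \ref{prop-epmet-into-stacks-has-left-adj-sh} directly from the construction of \(\pi_0\) and the universal property of sheafification, and then to deduce \ref{prop-epmet-into-stacks-has-left-adj-ep} by composing the resulting adjunction with the adjoint pair \((\mu, h)\) established in \cref{lem: univ adj PEMet}.

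For \ref{prop-epmet-into-stacks-has-left-adj-sh}, fix \(\mcX\in\Stack_{\lhftop}(\tbMet)\) and \(\mcF\in\Sh_{\lhftop}(\tbMet)\). Viewing \(\mcF\) as a stack through the Grothendieck construction, the fiber \(\mcF(U)\) over each \(U\in\tbMet\) is a \emph{discrete} groupoid: its only morphisms are identities. The first preparatory step is to observe that consequently any \(2\)-morphism between morphisms \(\mcX\rightrightarrows \mcF\) has its components in \(\mcF(U)\) and hence must be the identity; therefore \(\Hom_{\Stack_{\lhftop}(\tbMet)}(\mcX, \mcF)\) is naturally a set, so the claimed adjunction is a statement about ordinary \(\Hom\)-sets.

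The second step is to produce the bijection \(\Hom_{\Sh_{\lhftop}(\tbMet)}(\pi_0(\mcX), \mcF)\cong \Hom_{\Stack_{\lhftop}(\tbMet)}(\mcX, \mcF)\). Write \(\pi_0^{\mathrm{pre}}(\mcX)\) for the presheaf \(U\mapsto |\mcX(U)|\) of isomorphism classes, so that \(\pi_0(\mcX)\) is its sheafification. There is a canonical morphism of stacks \(\mcX\to \pi_0^{\mathrm{pre}}(\mcX)\to \pi_0(\mcX)\), and pullback along this gives a map in one direction. Conversely, given a morphism \(F:\mcX\to \mcF\) of stacks, the fact that \(\mcF(U)\) is discrete forces \(F_U\) to send isomorphic objects to equal ones, so \(F\) factors through a morphism of presheaves \(\pi_0^{\mathrm{pre}}(\mcX)\to \mcF\); since \(\mcF\) is already a sheaf, this extends uniquely to \(\pi_0(\mcX)\to \mcF\) by the universal property of sheafification. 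Verifying that these two assignments are mutually inverse and natural in \(\mcX\) and \(\mcF\) is formal.

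For \ref{prop-epmet-into-stacks-has-left-adj-ep}, note that by \cref{lem: lhf subcan}~\ref{lem: lhf subcan ass: subcan} the functor \(h:\epMet\to \PSh(\tbMet)\) factors through \(\Sh_{\lhftop}(\tbMet)\). Combined with the adjunction \(\mu\dashv h\) between \(\PSh(\tbMet)\) and \(\epMet\) recalled in \cref{def: univ adj} and \cref{lem: univ adj PEMet}~\ref{lem: univ adj PEMet ass: ff}, this implies that \(\mu\) restricted to \(\Sh_{\lhftop}(\tbMet)\) is left adjoint to the inclusion \(\epMet\subset \Sh_{\lhftop}(\tbMet)\): indeed, for any \(\mcF\in\Sh_{\lhftop}(\tbMet)\) and any \(X\in\epMet\), one has \(\Hom_{\Sh_{\lhftop}(\tbMet)}(\mcF, h(X)) = \Hom_{\PSh(\tbMet)}(\mcF, h(X)) \cong \Hom_{\epMet}(\mu(\mcF), X)\). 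Composing with the adjunction from \ref{prop-epmet-into-stacks-has-left-adj-sh} then exhibits \(\mu\circ \pi_0\) as a left adjoint to the composite inclusion \(\epMet\subset \Sh_{\lhftop}(\tbMet)\subset \Stack_{\lhftop}(\tbMet)\).

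The only delicate point is the \(2\)-categorical bookkeeping in \ref{prop-epmet-into-stacks-has-left-adj-sh}, namely making rigorous the observation that morphisms from a stack in groupoids into a sheaf factor canonically through the presheaf of isomorphism classes; everything else is a formal composition of adjunctions and there is no genuine obstacle.
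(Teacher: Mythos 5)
Your proposal is correct and follows essentially the same route as the paper: the paper disposes of \ref{prop-epmet-into-stacks-has-left-adj-sh} by invoking that \(\pi_0:\Grpd\to\sfSet\) is left adjoint to the inclusion \(\sfSet\subset\Grpd\) (which, applied fiberwise and combined with sheafification, is exactly your factorization through the presheaf of isomorphism classes), and deduces \ref{prop-epmet-into-stacks-has-left-adj-ep} by composing with the adjunction \(\mu\dashv h\) from \cref{lem: univ adj PEMet}, just as you do. Your write-up simply makes explicit the details the paper leaves implicit.
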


\begin{proof}
  \Cref{prop-epmet-into-stacks-has-left-adj-sh} follows immediately from the well-known fact that \(\pi_0:\Grpd\to \sfSet\) is a left adjoint functor to the inclusion functor \(\sfSet\subset \Grpd\).
  \Cref{prop-epmet-into-stacks-has-left-adj-ep} follows immediately from \cref{prop-epmet-into-stacks-has-left-adj-sh} and \cref{lem: univ adj PEMet}.
\end{proof}

\begin{definition}[{Coarse Moduli Space}]\label{def-good-moduli}
  Let \(\mcX\in \Stack_{\lhftop}(\tbMet)\) be a stack in groupoids over \(\tbMet_{\lhftop}\). 
  We write \(\mu(\mcX)\dfn \mu(\pi_0(\mcX))\in \epMet\) and \(\pi_{\mcX}:\mcX\to \mu(\mcX)\) for the natural morphism \confer{\cref{prop-epmet-into-stacks-has-left-adj} \ref{prop-epmet-into-stacks-has-left-adj-ep}}.
  We say that \(\pi_{\mcX}:\mcX\to \mu(\mcX)\) is a \textbf{good moduli space} of \(\mcX\). 
  We say that the good moduli space \(\pi_{\mcX}:\mcX\to \mu(\mcX)\) is a \textbf{coarse moduli space} if \(\mu(\mcX)\in\Met\).
\end{definition}

\begin{remark}
  The term ``good moduli space'' in \cref{def-good-moduli} is borrowed from \cite[Definition 4.1]{Alper-good-moduli}. 
  For an algebraic stack, if a good moduli exists, then by \cite[Theorem 6.6]{Alper-good-moduli}, it is, in many common situations, universal with respect to morphisms to algebraic spaces. 
  On the other hand, for algebraic stacks, a good moduli space need not exist, and even when it does, it does not imply that the connected components sheaf \(\pi_0\) itself is an algebraic space. 
  In contrast, since the embedding functor from the category of extended pseudometric spaces into the category of stacks admits a left adjoint \confer{\cref{prop-epmet-into-stacks-has-left-adj}}, in our situation, every stack in groupoids over \(\tbMet_{\lhftop}\) admits a good moduli space. 
\end{remark}

\begin{lemma}\label{prop-good-moduli-structure}
  Let \(\mcX\in\Stack_{\lhftop}(\tbMet)\) be a stack in groupoids over \(\tbMet\).
  Then, the following assertions hold:
  \begin{assertion}
    \item \label{prop-good-moduli-structure-uset}
    The map of sets \(u:\mcX(*)/\!\simeq \,\to \Hom_{\epMet}(*, \mu(\mcX))\tosim \uSet{\mu(\mcX)}\) is bijective.
    \item \label{prop-good-moduli-structure-dist}
    For any elements \(x,x'\in \mu(\mcX)\) and any positive real number \(\ep > 0\), there exist an integer \(N\in \N\), elements \(r_i\in \bRp\) (\(i\in\{0,\cdots,N\}\)), and morphisms \(f_i:2_{r_i}\to \mcX\) (\(i\in\{0,\cdots,N\}\)) such that the following conditions hold:
    \begin{condition}
      \item 
      \(\sum_{i=0}^N r_i \leq d_{\mu(\mcX)}(x,x') + \ep\). 
      \item 
      It holds that \(u(\iota_0^*f_0) = x\), and \(u(\iota_{r_N}^*f_N) = x'\), where we regard \(f_0\) (resp. \(f_N\)) as an object of \(\mcX(2_{r_0})\) (resp. \(\mcX(2_{r_N})\)) via the 2-Yoneda lemma \confer{\cite[\href{https://stacks.math.columbia.edu/tag/0GWI}{Tag 0GWI}]{stacks-project}}.
      \item 
      For any \(i\in\{1,\cdots, N\}\), it holds that \(\iota_{r_{i-1}}^*f_{i-1} \cong \iota_0^*f_i\) in \(\mcX(*)\). 
    \end{condition}
  \end{assertion}
\end{lemma}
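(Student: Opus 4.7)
The plan is to reduce both assertions to \cref{lem: univ adj PEMet}~\ref{lem: univ adj PEMet ass: uset} applied to $\pi_0(\mcX)$ regarded as a presheaf, after first establishing that the canonical map from the presheaf $P\dfn \mcX(-)/\simeq$ of isomorphism classes to its sheafification $\pi_0(\mcX)$ is bijective on sections over the terminal object $*\in\tbMet$. Granting this key comparison, the chain of bijections $\uSet{\mu(\mcX)}=\uSet{\mu(\pi_0(\mcX))}\cong \pi_0(\mcX)(*)\cong P(*)=\mcX(*)/\simeq$ yields \cref{prop-good-moduli-structure-uset}, and the chain portion of \cref{lem: univ adj PEMet}~\ref{lem: univ adj PEMet ass: uset} applied to $\pi_0(\mcX)$ lifts, via the 3-point condition on lsm covers, to the chain in $\mcX$ demanded by \cref{prop-good-moduli-structure-dist}.

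For the key comparison $P(*)\tosim \pi_0(\mcX)(*)$, the essential input is \cref{rmk: cov in Met}~\ref{rmk: cov in Met enumi: surj}: every lsm covering $\{U_i\to *\}$ has a non-empty member $U_{i_0}$ and hence admits a point $u:*\to U_{i_0}$. Injectivity: if $[s],[s']\in\mcX(*)/\simeq$ have equal image in $\pi_0(\mcX)(*)$, then on some lsm cover $\{U_i\to *\}$ one has $s|_{U_i}\cong s'|_{U_i}$ in $\mcX(U_i)$, and pulling any such isomorphism back along $u$ gives $s\cong s'$ in $\mcX(*)$. Surjectivity: given a matching family $(s_i)\in\prod_i P(U_i)$ representing an element of $\pi_0(\mcX)(*)$, set $s\dfn u^*s_{i_0}\in P(*)$; factoring the constant map $c_u:U_i\to U_{i_0}$ as $\pi_1\circ(u,\id_{U_i}):U_i\to U_{i_0}\times U_i\to U_{i_0}$ and invoking the matching identity $s_{i_0}|_{U_{i_0}\times U_i}=s_i|_{U_{i_0}\times U_i}$ in $P(U_{i_0}\times U_i)$ gives $s|_{U_i}=c_u^*s_{i_0}=s_i$ in $P(U_i)$ for every $i$, so $s$ and $(s_i)$ agree on the cover $\{U_i\to *\}$ and represent the same class in $\pi_0(\mcX)(*)$.

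For \cref{prop-good-moduli-structure-dist}, apply the chain portion of \cref{lem: univ adj PEMet}~\ref{lem: univ adj PEMet ass: uset} to $\pi_0(\mcX)$ with tolerance $\ep/2$ to obtain $N\in\N$, $r_i\in\bRp$, and $\tilde f_i\in\pi_0(\mcX)(2_{r_i})$ with $\sum_i r_i\leq d_{\mu(\mcX)}(x,x')+\ep/2$, satisfying the stated endpoint and gluing identities, now read in $\pi_0(\mcX)(*)\cong \mcX(*)/\simeq$. Each $\tilde f_i$ is represented by a matching family on some lsm covering $\{p_j:U_j\to 2_{r_i}\}$; the 3-point condition \cref{defi: cov in Met}~\ref{defi: cov in Met enumi: lhf condi: 3pt} applied to the triple $(0,0,r_i)\in 2_{r_i}$ with slack $\ep_i\dfn\ep/(2(N+1))$ furnishes an index $j_i$ and points $u_0^i,u_1^i\in U_{j_i}$ with $p_{j_i}(u_0^i)=0$, $p_{j_i}(u_1^i)=r_i$, and $r'_i\dfn d_{U_{j_i}}(u_0^i,u_1^i)\in[r_i,r_i+\ep_i)$ (the lower bound follows because $p_{j_i}$ is 1-Lipschitz). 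The assignment $0\mapsto u_0^i$, $r'_i\mapsto u_1^i$ defines an isometric embedding $\phi_i:2_{r'_i}\inj U_{j_i}$, and $f_i\dfn\phi_i^*g_{j_i}\in\mcX(2_{r'_i})$ (for $g_{j_i}$ a chosen representative of the class in the matching family at index $j_i$) is the desired lift.

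The chain conditions for $(f_i)_i$ are verified at the endpoints: $\iota_0^*f_i=(u_0^i)^*g_{j_i}$ and $\iota_{r'_i}^*f_i=(u_1^i)^*g_{j_i}$, and reprising the surjectivity argument from the second paragraph, applied to the pulled-back lsm cover $\{U_j\times_{2_{r_i}}*\to *\}$ over $0$ (resp. $r_i$) with $u_0^i\in p_{j_i}^{-1}(0)$ (resp. $u_1^i\in p_{j_i}^{-1}(r_i)$) as the chosen point, identifies $\iota_0^*\tilde f_i$ with $[(u_0^i)^*g_{j_i}]$ and $\iota_{r_i}^*\tilde f_i$ with $[(u_1^i)^*g_{j_i}]$ in $\pi_0(\mcX)(*)\cong\mcX(*)/\simeq$; combined with the chain identities for $(\tilde f_i)$, this yields the required identities for $(f_i)$ and the length bound $\sum_i r'_i<\sum_i r_i+(N+1)\ep_i\leq d_{\mu(\mcX)}(x,x')+\ep$. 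The main difficulty is the comparison $P(*)\tosim\pi_0(\mcX)(*)$: for a general stack, sheafification of the presheaf of isomorphism classes is nontrivial even at the terminal object, and the argument relies essentially on the availability of points in lsm covers of $*$ to rigidify matching-family data into genuine equalities inside $\mcX(*)$.
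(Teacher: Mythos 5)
Your proposal is correct in substance, but it takes a genuinely different route from the paper. The paper's proof is purely formal: since the site is subcanonical \confer{\cref{lem: lhf subcan} \ref{lem: lhf subcan ass: subcan}}, the right adjoint \(h:\epMet\to\PSh(\tbMet)\) takes values in sheaves, so \(\mu\) of the presheaf \(P\dfn \mcX(-)/\!\simeq\) of isomorphism classes is canonically isometric to \(\mu\) of its sheafification \(\pi_0(\mcX)\); one then applies \cref{lem: univ adj PEMet} \ref{lem: univ adj PEMet ass: uset} directly to the presheaf \(P\), and both assertions are read off verbatim, with the \(r_i\) unchanged and without ever examining sections of the sheafification. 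You instead apply that lemma to the sheaf \(\pi_0(\mcX)\) and then do the descent by hand: you prove \(P(*)\tosim\pi_0(\mcX)(*)\) using that every lsm covering of \(*\) has a member admitting a point (equivalently, the sieve it generates contains \(\id_*\), so every covering sieve of \(*\) is maximal --- which, incidentally, disposes of the two-step plus-construction issue and justifies your implicit assumption that elements of \(\pi_0(\mcX)(*)\), and restrictions of sections over \(2_{r_i}\), are locally represented by \(P\)-sections), and you lift the chain from \(\pi_0(\mcX)\) back into \(\mcX\) by locally representing each \(\tilde f_i\) on an lsm covering of \(2_{r_i}\) and invoking the 3-point condition \confer{\cref{defi: cov in Met} \ref{defi: cov in Met enumi: lhf condi: 3pt}}, at the cost of stretching each \(r_i\) by \(\ep/(2(N+1))\), which your budget absorbs. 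What each approach buys: the paper's argument is shorter, avoids any analysis of the sheafification, and produces the chain with the original \(r_i\); yours is more explicit about the geometry of the site (isolating the useful fact that \(*\) admits only the maximal covering sieve) and does not need the observation \(\mu(P)\cong\mu(\pi_0(\mcX))\). Two small points you should still patch: state explicitly the maximal-sieve (or iterated plus-construction) remark behind "representing by a matching family", and dispose of the degenerate case \(r_i=\infty\) (only possible when \(d_{\mu(\mcX)}(x,x')=\infty\), where \(h_{2_\infty}\) restricted to \(\tbMet\) is just a pair of points, so the required data is trivially arranged); neither affects the correctness of your argument.
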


\begin{proof}
  Both assertions of \cref{prop-good-moduli-structure} follow immediately from \cref{lem: univ adj PEMet} \ref{lem: univ adj PEMet ass: uset}, \cref{lem: lhf subcan} \ref{lem: lhf subcan ass: subcan}, and \cref{prop-epmet-into-stacks-has-left-adj} \ref{prop-epmet-into-stacks-has-left-adj-ep}.
\end{proof}

Next, in order to define ``stacks that are covered by metric spaces,'' we extend the notion of submetry to morphisms of stacks in groupoids over \(\tbMet_{\lhftop}\) and introduce a class of morphisms of stacks in groupoids over \(\tbMet_{\lhftop}\) called \textbf{\fHfib}. 
Here we recall the characterization given in \cref{prop-Hfib-char}. 
Let us translate it into the 2-categorical setting.

\begin{definition}\label{def-Hfib-for-stacks}
  Let \(f:\mcX\to \mcY\) be a morphism of stacks in groupoids over \(\tbMet_{\lhftop}\). 
  We say that \(f\) is a \textbf{\fHfib} if the following conditions hold:
  \begin{condition}
    \item \label{def-Hfib-for-stacks-ess-surj}
    The morphism of groupoids \(f_*:\mcX(*)\to \mcY(*)\) is essentially surjective.
    \item 
    for any positive real numbers \(r > s > 0\), any morphisms \(x:*\to \mcX\) and \(g:2_s\to \mcY\) of stacks in groupoids over \(\tbMet_{\lhftop}\), and any 2-morphism \(\alpha:f\circ x \tosim g\circ \iota_{0s}\), there exist a morphism \(h:2_r\to \mcX\) of stacks in groupoids over \(\tbMet_{\lhftop}\) and 2-morphisms \(\beta:h\circ \iota_{0r}\tosim x\), \(\gamma:f\circ h \tosim g\circ \iota_{rs}\) such that \(\alpha\circ (f\star \beta) = \gamma\star \iota_{0r}\): 
    \begin{equation*}
      \begin{tikzcd}
        {*} \ar[rr, "x"] \ar[d, "{\iota_{0r}}"'] 
        &[0.3cm] {\relax} &[0.3cm] 
        {\mcX} \ar[d, "f"]
        \ar[dl, shorten=2mm, Rightarrow, "{\exists \gamma}"{below, xshift=-0.4cm, yshift=0.15cm}, end anchor={[xshift=0.5cm]}] \\
        {2_r} \ar[r, "{\iota_{rs}}"'] \ar[urr, "{\exists h}"] 
        \ar[ur, shorten=3mm, Rightarrow, "{\exists \beta}", end anchor={[xshift=-0.5cm]}]
        & {2_s} \ar[r, "g"'] & {\mcY}
      \end{tikzcd}
    \end{equation*}
  \end{condition}
\end{definition}

\begin{remark}\label{rmk-fHfib}
  The above definition abstracts only the categorical properties of submetries.
  If one aims to extend the theory of metric spaces to stacks in groupoids over \(\tbMet_{\lhftop}\), a morphism of stacks that should genuinely be called a submetry would presumably be required to satisfy additional conditions, beyond those listed above, that more fully reflect the geometric nature of submetries.
  However, determining what such conditions should be lies outside the scope of the present paper, and we therefore do not pursue this issue here.
  For this reason, we refer to the above notion as a \textbf{formal} submetry.
\end{remark}

\begin{definition}\label{def-naive-metric-stack}
  We say that a stack in groupoids \(\mcX\) over \(\tbMet_{\lhftop}\) is a \textbf{naive metric stack} if there exists a metric space \(X\) and a \fHfib of stacks in groupoids \(X\to \mcX\) over \(\tbMet_{\lhftop}\). 
\end{definition}

\begin{remark}\label{rmk-why-naive}
  The reason we attach the term ``naive'' in \cref{def-naive-metric-stack} is as follows.
  \begin{enumerate}
    \item 
    As already pointed out in \cref{rmk-fHfib}, the notion of submetry should be extended to morphisms of stacks over \(\tbMet_{\lhftop}\) in a more appropriate manner.
    In the present work, however, in light of our purposes, we impose only minimal categorical requirements on formal submetries, and for this reason we regard \cref{def-naive-metric-stack} as being merely ``naive.''
    \item \label{rmk-why-naive-BG}
    In ordinary metric geometry, typical examples of metric spaces equipped with a group structure do not form group objects in the category of metric spaces. 
    Thus, it is not evident that restricting attention to stacks valued in groupoids is the most appropriate choice. 
    Rather, typical examples of metric spaces with a group structure (for instance \(SO(N)\) or the group of self-isometries of a homogeneous metric space) are understood as monoidal groups with respect to the \(\ell^1\)-product. 
    Hence, in order to treat actions of such objects on metric spaces within a stack-theoretic framework, it seems necessary to develop the theory of stacks as simplicial sheaves with effective descent condition, like \cite[{the paragraph below of Remark~5.14}]{Jardine-local-homotopy} (see also \cref{rmk-quot-stack-sset}).
    This is also why we did not require representability of diagonals.
  \end{enumerate}
  Although it is not entirely clear what kind of theory of stacks Gromov actually had in mind when he suggested such a possibility in \cite[{3.11 {\small\(\frac{3}{4}_+\)}}]{Gromov}, in view of the discussion in (2) above, it seems that any such theory can be realized only partially. 
  If Gromov intended to treat a geometric object that keeps track of isometries as a metric space---namely, if one were to impose representability of the diagonal morphism in the definition of a ``metric stack''---then such a theory would most likely fail.
  In the theory of algebraic stacks, the requirement that the diagonal be representable by an algebraic space is imposed for several reasons, one of which is that the categories of schemes or algebraic spaces do not behave well from a categorical viewpoint. 
  By contrast, the category of metric spaces enjoys substantially better formal properties than those categories. 
  For this reason, rather than imposing representability of diagonals, the author believes that one should instead pursue an approach in which the entire theory of metric spaces is extended to general sheaves on \(\tbMet_{\lhftop}\).
\end{remark}

Let us now study the fundamental properties of the notions defined above.

\begin{lemma}\label{prop-Hfib-for-stacks}
  The following assertions hold:
  \begin{assertion}
    \item \label{prop-Hfib-for-stacks-met}
    Let \(f:X\to Y\) be a morphism in \(\epMet\). 
    Then, \(f\) is a \Hfib as a morphism of extended pseudometric spaces if and only if \(f\) is a \fHfib as a morphism of stacks in groupoids over \(\tbMet_{\lhftop}\).
    \item \label{prop-Hfib-for-stacks-comp}
    Let \(f:\mcX\to \mcY\) and \(g:\mcY\to \mcZ\) be morphisms of stacks in groupoids over \(\tbMet_{\lhftop}\). 
    If \(f\) and \(g\) are \fHfibs, then \(g\circ f\) is a \fHfib.
    \item \label{prop-Hfib-for-stacks-comp-converse}
    Let \(f:\mcX\to \mcY\) and \(g:\mcY\to \mcZ\) be morphisms of stacks in groupoids over \(\tbMet_{\lhftop}\). 
    If \(f_*:\mcX(*)\to \mcY(*)\) is essentially surjective, and \(g\circ f\) is a \fHfib, then \(g\) is a \fHfib.
    \item \label{prop-Hfib-for-stacks-bc}
    Let \(f:\mcX\to \mcY\) and \(g:\mcY'\to \mcY\) be morphisms of stacks in groupoids over \(\tbMet_{\lhftop}\). 
    If \(f\) is a \fHfib, then the natural projection \(p:\mcX\times_{\mcY}\mcY'\to \mcY'\) \confer{\cref{notation-stacks} \ref{notation-stacks-2-prod}} is a \fHfib.
  \end{assertion}
\end{lemma}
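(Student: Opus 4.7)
The plan is to address the four assertions in turn, with (1) reducing to a translation using \cref{prop-Hfib-char}, and (2)--(4) being formal 2-categorical manipulations whose only real content lies in carefully tracking the coherence condition $\alpha\circ(f\star\beta)=\gamma\star\iota_{0r}$.

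For \cref{prop-Hfib-for-stacks-met}, since $X(*)=\uSet{X}$ carries no nontrivial isomorphisms, essential surjectivity of $f_*$ is exactly surjectivity of $\uSet{f}$. For the lifting, suppose first that $f$ is a \Hfib and fix data $(x,g,\alpha)$ with $g:2_s\to Y$ and $r>s$. Since $g$ is $1$-Lipschitz, $d_Y(g(0),g(s))\leq s<r$, so setting $\epsilon\dfn r-d_Y(f(x),g(s))>0$ and considering the composite $g\circ\iota_{rs}:2_r\to Y$, \cref{prop-Hfib-char}~``\ref{prop-Hfib-char-Hfib}$\Leftrightarrow$\ref{prop-Hfib-char-lift}'' delivers the required $h:2_r\to X$. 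Conversely, if $f$ is a \fHfib, then given $b_0=f(x)$, $b_1\in Y$ and $\epsilon>0$, take $s=\max\{d_Y(b_0,b_1),\epsilon/2\}$, $r=s+\epsilon/2$, and apply the lifting condition to $g:2_s\to Y$ sending $0\mapsto b_0$, $s\mapsto b_1$; this recovers the characterization \ref{prop-Hfib-char-ineq} of \cref{prop-Hfib-char}.

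For \cref{prop-Hfib-for-stacks-comp}, essential surjectivity of $(g\circ f)_*=g_*\circ f_*$ is immediate. For the lifting, given $r>s>0$ and data $(x,g'',\alpha:gf\circ x\tosim g''\circ\iota_{0s})$, choose an intermediate radius $r'\in(s,r)$. Apply $g$'s \fHfib property to $(f\circ x,g'',\alpha)$ to obtain $h_1:2_{r'}\to\mcY$ with $\beta_1:h_1\circ\iota_{0r'}\tosim f\circ x$ and $\gamma_1:g\circ h_1\tosim g''\circ\iota_{r's}$ satisfying $\alpha\circ(g\star\beta_1)=\gamma_1\star\iota_{0r'}$. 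Then apply $f$'s \fHfib property to $(x,h_1,\beta_1^{-1})$ to obtain $h_2:2_r\to\mcX$ with $\beta_2:h_2\circ\iota_{0r}\tosim x$ and $\gamma_2:f\circ h_2\tosim h_1\circ\iota_{rr'}$ satisfying $\beta_1^{-1}\circ(f\star\beta_2)=\gamma_2\star\iota_{0r}$. Using the factorization $\iota_{rs}=\iota_{r's}\circ\iota_{rr'}$, define $\gamma\dfn(\gamma_1\star\iota_{rr'})\circ(g\star\gamma_2)$; the compatibility then follows by a diagram chase combining the two given coherences via the interchange law. For \cref{prop-Hfib-for-stacks-comp-converse}, use essential surjectivity of $f_*$ to pick, for any $(y,g'',\alpha)$ with $y:*\to\mcY$, a lift $x:*\to\mcX$ with $\eta:f\circ x\tosim y$; then the \fHfib property of $g\circ f$ applied to $(x,g'',\alpha\circ(g\star\eta))$ produces $h_1:2_r\to\mcX$, and $h\dfn f\circ h_1$ with $\beta\dfn \eta\circ(f\star\beta_1)$ and $\gamma\dfn\gamma_1$ works; the coherence for $g$ follows directly from the coherence for $g\circ f$ after substituting the definition of $\beta$.

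For \cref{prop-Hfib-for-stacks-bc}, recall that an object of $\mcX\times_{\mcY}\mcY'$ over $T$ is a triple $(x,y',\theta:f\circ x\tosim g\circ y')$. Essential surjectivity of $p_*$ follows: for $y'\in\mcY'(*)$, apply essential surjectivity of $f_*$ to $g\circ y'$ to produce $x\in\mcX(*)$ together with $\theta$. For the lifting, given $((x_0,y'_0,\theta_0),g',\alpha:y'_0\tosim g'\circ\iota_{0s})$, apply $f$'s \fHfib property to the triple $(x_0,\ g\circ g',\ (g\star\alpha)\circ\theta_0)$ to obtain $h:2_r\to\mcX$, $\beta:h\circ\iota_{0r}\tosim x_0$, and $\gamma:f\circ h\tosim g\circ g'\circ\iota_{rs}$. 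Then $(h,\ g'\circ\iota_{rs},\ \gamma):2_r\to\mcX\times_{\mcY}\mcY'$ is the desired lift, paired with $(\beta,\alpha^{-1}\star\iota_{rs})$ as the 2-isomorphism to $(x_0,y'_0,\theta_0)\circ\iota_{0r}$; the identity $\gamma$ on the $\mcY'$-component yields the second coherence, and the pullback's universal property enforces the required compatibility.

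The only genuinely delicate part is verifying the coherence equation $\alpha\circ(f\star\beta)=\gamma\star\iota_{0r}$ in \cref{prop-Hfib-for-stacks-comp} after chaining the two lifts, since one must simultaneously rewrite the domain-factor $\iota_{rs}$ as $\iota_{r's}\circ\iota_{rr'}$ and apply the interchange law to mesh the coherence delivered by $g$'s lift (which lives over $2_{r'}$) with that delivered by $f$'s lift (which lives over $2_r$). Once this diagrammatic check is made, the remaining verifications reduce to direct substitution.
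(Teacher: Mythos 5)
Your proposal is correct and takes essentially the same route as the paper: part (1) is the translation via \cref{prop-Hfib-char}, part (3) coincides with the paper's whiskering argument (lift \(y\) through the essential surjectivity of \(f_*\), apply the \fHfib property of \(g\circ f\), and push the resulting data forward along \(f\)), and parts (2) and (4), which the paper dismisses as straightforward unpacking, are carried out correctly, including the necessary two-step lift through an intermediate radius \(r'\in(s,r)\) and the interchange-law bookkeeping. The only blemish is notational: in (4) the \(\mcY'\)-component of the 2-isomorphism \(H\circ\iota_{0r}\tosim (x_0,y'_0,\theta_0)\) should be just \(\alpha^{-1}\) (a 2-morphism \(g'\circ\iota_{0s}\tosim y'_0\), using \(\iota_{rs}\circ\iota_{0r}=\iota_{0s}\)), not \(\alpha^{-1}\star\iota_{rs}\).
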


\begin{proof}
  \Cref{prop-Hfib-for-stacks-met} follows immediately from \cref{prop-Hfib-char} ``\ref{prop-Hfib-char-Hfib}\(\Leftrightarrow\)\ref{prop-Hfib-char-lift}''.
  \Cref{prop-Hfib-for-stacks-comp,prop-Hfib-for-stacks-bc} follow by a straightforward verification once one unpacks the definitions.
  
  Next, we prove \cref{prop-Hfib-for-stacks-comp-converse}.
  Let \(r > s > 0\) be positive real numbers; \(y:*\to \mcY\) and \(c:2_s\to \mcZ\) morphisms of stacks in groupoids over \(\tbMet_{\lhftop}\); \(\alpha:g\circ y\tosim c\circ \iota_{0s}\) a 2-morphism.
  Since \(f\) is essentially surjective, there exists a morphism \(x:*\to \mcX\) and a 2-morphism \(\alpha':f\circ x\tosim y\).
  Write \(\alpha_0\dfn \alpha\circ (g\star\alpha'):(g\circ f)\circ x = g\circ (f\circ x) \tosim g\circ y\tosim c\circ \iota_{0s}\).
  Since \(g\circ f\) is a \fHfib, there exist a morphism \(a:2_r\to \mcX\), 2-morphism \(\beta:a\circ\iota_{0r}\tosim x\), and 2-morphism \(\gamma:(g\circ f)\circ a\tosim c\circ \iota_{rs}\) such that \(\alpha_0\circ ((g\circ f) \star \beta) = \gamma\star \iota_{0r}\). 
  Write \(b\dfn f\circ a:2_r\to \mcY\) and \(\beta'\dfn \alpha'\circ (f\star \beta): b\circ \iota_{0r}\tosim y\).
  Then, it holds that 
  \[
    \gamma\star\iota_{0r} = \alpha_0\circ ((g\circ f) \star \beta) 
    = \alpha\circ (g\star\alpha')\circ ((g\circ f)\star \beta)
    = \alpha\circ (g\star (\alpha'\circ (f\star \beta)))
    = \alpha\circ (g\star \beta').
  \]
  This implies that \(g\) is a \fHfib.
  This completes the proof of \cref{prop-Hfib-for-stacks} \ref{prop-Hfib-for-stacks-comp-converse}. 
\end{proof}

\begin{lemma}\label{prop-naive-metricity-Hfib-descending}
  Let \(f:\mcX\to \mcY\) be a morphism of stacks in groupoids over \(\tbMet_{\lhftop}\). 
  Assume that \(f\) is a \fHfib and that \(\mcX\) is a naive metric stack.
  Then, \(\mcY\) is a naive metric stack.
\end{lemma}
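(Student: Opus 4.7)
The plan is to unwind the definitions and reduce everything to the composition property of formal submetries established in \cref{prop-Hfib-for-stacks} \ref{prop-Hfib-for-stacks-comp}. By \cref{def-naive-metric-stack}, the assumption that \(\mcX\) is a naive metric stack supplies a metric space \(X\) together with a \fHfib \(p:X\to\mcX\). We then have the composite
\[
X \xto{p} \mcX \xto{f} \mcY
\]
of two \fHfibs, and the goal reduces to exhibiting a metric space admitting a \fHfib to \(\mcY\).

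First, I would invoke \cref{prop-Hfib-for-stacks} \ref{prop-Hfib-for-stacks-comp} directly on the pair \((p,f)\) to conclude that \(f\circ p:X\to\mcY\) is a \fHfib. Once this is done, the definition of a naive metric stack \confer{\cref{def-naive-metric-stack}} is satisfied by the metric space \(X\) together with the morphism \(f\circ p\). There is no obstacle of substance here: the content of the statement is precisely that \fHfibs compose, which has already been recorded as a structural lemma, and the definition of a naive metric stack is formulated so as to be preserved under postcomposition with any \fHfib.

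The only point that deserves a one-line remark is that no coherence or compatibility data on the formal submetry from \(X\) needs to be checked---\cref{def-naive-metric-stack} merely requires the \emph{existence} of such a morphism from some metric space, with no further property imposed on the pair. Consequently the argument is a single application of \cref{prop-Hfib-for-stacks} \ref{prop-Hfib-for-stacks-comp}, and in the write-up I would simply state: ``Let \(p:X\to\mcX\) be a \fHfib from a metric space \(X\) provided by \cref{def-naive-metric-stack}. Then, by \cref{prop-Hfib-for-stacks} \ref{prop-Hfib-for-stacks-comp}, the composite \(f\circ p:X\to\mcY\) is a \fHfib, and hence \(\mcY\) is a naive metric stack.''
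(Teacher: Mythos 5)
Your proposal is correct and coincides with the paper's own argument: the proof there likewise cites \cref{def-naive-metric-stack} to obtain a \fHfib \(p:X\to\mcX\) from a metric space and then applies \cref{prop-Hfib-for-stacks} \ref{prop-Hfib-for-stacks-comp} to conclude that \(f\circ p\) is a \fHfib. Nothing further is needed.
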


\begin{proof}
  \Cref{prop-naive-metricity-Hfib-descending} follows immediately from \cref{def-naive-metric-stack} and \cref{prop-Hfib-for-stacks} \ref{prop-Hfib-for-stacks-comp}.
\end{proof}

\begin{lemma}\label{prop-nm-stack-gd-moduli}
  Let \(\mcX\) be a naive metric stack.
  Then, the morphism of stacks \(\pi_{\mcX}:\mcX\to \mu(\mcX)\) is a \fHfib.
  In particular, \(\mu(\mcX)\) is a pseudometric space (i.e., the distance on \(\mu(\mcX)\) cannot have \(\infty\)).
\end{lemma}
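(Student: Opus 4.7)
The plan is to reduce the lemma to proving that the composition $f\dfn \pi_{\mcX}\circ p:X\to\mu(\mcX)$ is a submetry of extended pseudometric spaces, where $p:X\to\mcX$ is the formal submetry provided by the naive metric stack hypothesis. By \cref{prop-Hfib-for-stacks} \ref{prop-Hfib-for-stacks-met}, this will make $f$ into a formal submetry in the stack sense; since $p_*$ is essentially surjective by \cref{def-Hfib-for-stacks}, applying \cref{prop-Hfib-for-stacks} \ref{prop-Hfib-for-stacks-comp-converse} to the factorization $X\xto{p}\mcX\xto{\pi_{\mcX}}\mu(\mcX)$ will then deliver the main assertion that $\pi_{\mcX}$ is a formal submetry.

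I would first dispose of surjectivity and the \emph{in particular} clause. Essential surjectivity of $p_*:X\to \mcX(*)$ combined with the bijection $\mcX(*)/\!\simeq\,\tosim \uSet{\mu(\mcX)}$ of \cref{prop-good-moduli-structure} \ref{prop-good-moduli-structure-uset} shows that the underlying map of $f$ is surjective. Since the embedding $\epMet\subset \Stack_{\lhftop}(\tbMet)$ is fully faithful (via \cref{lem: univ adj PEMet} \ref{lem: univ adj PEMet ass: ff} and \cref{lem: lhf subcan} \ref{lem: lhf subcan ass: subcan}), $f$ is $1$-Lipschitz. Then for any $b_0,b_1\in\mu(\mcX)$, picking lifts $x_0,x_1\in X$ yields $d_{\mu(\mcX)}(b_0,b_1)\leq d_X(x_0,x_1)<\infty$, so $\mu(\mcX)$ is pseudometric, which handles the second assertion.

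To show $f$ is a submetry I would verify characterization \ref{prop-Hfib-char-ineq} of \cref{prop-Hfib-char}. Fix $x_0\in X$, $b_1\in \mu(\mcX)$, and $\ep>0$. Apply \cref{prop-good-moduli-structure} \ref{prop-good-moduli-structure-dist} to the pair $(f(x_0),b_1)$ with tolerance $\ep/2$ to obtain a chain of morphisms $f_i:2_{r_i}\to \mcX$ ($0\leq i\leq N$) satisfying $\sum r_i\leq d_{\mu(\mcX)}(f(x_0),b_1)+\ep/2$, $\iota_0^*f_0\cong p(x_0)$, $[\iota_{r_N}^*f_N]=b_1$, and the chain-compatibility $\iota_{r_{i-1}}^*f_{i-1}\cong \iota_0^*f_i$ in $\mcX(*)$. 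Choose thickening radii $r'_i>r_i$ with $\sum r'_i\leq \sum r_i+\ep/2$, and inductively construct points $x_0^*=x_0,x_1^*,\dots,x_{N+1}^*\in X$ as follows: at stage $i$, the isomorphism $p(x_i^*)\cong \iota_0^*f_i$ together with the formal submetry property of $p$ applied with $r=r'_i$ and $s=r_i$ yields a lift $h_i:2_{r'_i}\to X$ with $h_i\circ\iota_{0r'_i}\cong x_i^*$ and $p\circ h_i\cong f_i\circ \iota_{r'_i r_i}$; set $x_{i+1}^*\dfn h_i\circ\iota_{r'_i}$. A direct computation then gives $d_X(x_i^*,x_{i+1}^*)\leq r'_i$ (as $h_i$ is $1$-Lipschitz) and $p(x_{i+1}^*)\cong \iota_{r_i}^*f_i\cong \iota_0^*f_{i+1}$, so the induction propagates; at the end $f(x_{N+1}^*)=b_1$ and $d_X(x_0,x_{N+1}^*)\leq \sum r'_i\leq d_{\mu(\mcX)}(f(x_0),b_1)+\ep$, as required.

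The main obstacle I anticipate is the coherent bookkeeping of $2$-morphisms in the inductive lifting: each invocation of the formal submetry property of $p$ produces the $2$-morphisms $\beta$ and $\gamma$ of \cref{def-Hfib-for-stacks}, and these must be spliced coherently with the chain-compatibility isomorphisms $\iota_{r_{i-1}}^*f_{i-1}\cong \iota_0^*f_i$ in order to furnish precisely the hypothesis required at the next stage. Once this $2$-categorical coherence is handled, the numerical bound on $d_X(x_0,x_{N+1}^*)$ is immediate.
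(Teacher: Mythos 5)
Your proposal is correct and follows essentially the same route as the paper's proof: reduce via \cref{prop-Hfib-for-stacks} \ref{prop-Hfib-for-stacks-met} \ref{prop-Hfib-for-stacks-comp-converse} to showing $\pi_{\mcX}\circ p$ is a \Hfib, establish surjectivity and pseudometricity first, then use the chain description of \cref{prop-good-moduli-structure} \ref{prop-good-moduli-structure-dist} with slightly thickened radii and lift inductively along the \fHfib $p$. The paper likewise leaves the $2$-morphism splicing in the induction to an inductive assertion, so your flagged ``obstacle'' is handled at the same level of detail there.
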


\begin{proof}
  Since \(\mcX\) is a naive metric stack, there exist a metric space \(U\in\Met\) and a \fHfib \(p:U\to \mcX\). 
  Hence, by \cref{prop-Hfib-for-stacks} \ref{prop-Hfib-for-stacks-met} \ref{prop-Hfib-for-stacks-comp-converse}, to prove that \(\pi_{\mcX}:\mcX\to \mu(\mcX)\) is a \fHfib, it suffices to prove that \(\pi_{\mcX}\circ p:U\to \mu(\mcX)\) is a \Hfib.
  Note that, by \cref{prop-good-moduli-structure} \ref{prop-good-moduli-structure-uset} and \cref{def-Hfib-for-stacks} \ref{def-Hfib-for-stacks-ess-surj}, the map \(\pi_{\mcX}\circ p\) is surjective.
  Therefore, \(\mu(\mcX)\in \PMet\). 
  Write \(\rho:\mcX(*)/\!\simeq\,\tosim\uSet{\mu(\mcX)}\) for the natural bijection \confer{\cref{prop-good-moduli-structure}}.

  Let \(u\in U\) and \(x\in \mu(\mcX)\) be elements; \(\ep>0\) a positive real number.
  By \cref{prop-good-moduli-structure} \ref{prop-good-moduli-structure-dist}, there exists an integer \(N\in \N\), elements \(s_i\in \bRp\) (\(i\in\{0,\cdots,N\}\)), and morphisms \(f_i:2_{s_i}\to \mcX\) (\(i\in\{0,\cdots,N\}\)) such that the following conditions hold:
  \begin{condition}
    \item \label{prop-good-is-coarse-hfib-condi-1}
    \(\sum_{i=0}^N s_i \leq d_{\mu(\mcX)}((\pi_{\mcX}\circ p)(u), x) + \ep/2\). 
    \item \label{prop-good-is-coarse-hfib-condi-2}
    It holds that \(\rho(\iota_0^*f_0) = (\pi_{\mcX}\circ p)(u)\) and \(\rho(\iota_{s_N}^*f_N) = x\), where we regard \(f_0\) (resp. \(f_N\)) as an object of \(\mcX(2_{s_0})\) (resp. \(\mcX(2_{s_N})\)) via the 2-Yoneda lemma \confer{\cite[\href{https://stacks.math.columbia.edu/tag/0GWI}{Tag 0GWI}]{stacks-project}}.
    \item \label{prop-good-is-coarse-hfib-condi-3}
    For any \(i\in\{1,\cdots, N\}\), it holds that \(\iota_{s_{i-1}}^*f_{i-1} \cong \iota_0^*f_i\) in \(\mcX(*)\). 
  \end{condition}
  Since \(\mu(\mcX)\in\PMet\), it follows from \cref{prop-good-is-coarse-hfib-condi-1} that for each \(i\in\{0,\cdots,N\}\), \(s_i\neq \infty\).
  Moreover, by \cref{prop-good-is-coarse-hfib-condi-2} (and the 2-Yoneda lemma), there exists a 2-morphism \(\pi_{\mcX}\circ u\tosim f_0\circ\iota_{0s_0}\) (where we regard \(u\in U\) as a morphism \(*\to U\)).
  Write \(r_i\dfn s_i + \frac{\ep}{4(N+1)}\) (\(i\in\{0,\cdots,N\}\)). 
  Since \(p:U\to \mcX\) is a \fHfib, it follows from \cref{prop-good-is-coarse-hfib-condi-2} that there exists a morphism \(\tilde{f}_0:2_{r_0}\to U\) such that \(\tilde{f}(0) = u\) and \(p\circ \tilde{f}_0\cong f_0\circ \iota_{r_0s_0}\) in \(\mcX(2_{r_0})\) (via 2-Yoneda lemma). 
  Using \cref{prop-good-is-coarse-hfib-condi-3} together with the fact that \(p\) is a \fHfib, inductively we obtain, for each \(i\in\{1,\cdots, N\}\), a morphism \(\tilde{f}_i:2_{r_i}\to U\) such that \(\tilde{f}_{i-1}(r_{i-1}) = \tilde{f}_i(0)\) (\(\forall i\in\{1,\cdots, N\}\)), and \(p\circ \tilde{f}_i\cong f_i\circ \iota_{r_is_i}\) (\(\forall i\in\{1,\cdots, N\}\)).
  Write \(u'\dfn \tilde{f}_N(r_N)\in U\). 
  Then, by \cref{prop-good-is-coarse-hfib-condi-2}, it holds that \((\pi_{\mcX}\circ p)(u') = x\).
  Moreover, by \cref{prop-good-is-coarse-hfib-condi-3} and the definition of \(r_i\) (\(i\in\{0,\cdots,N\}\)), it holds that 
  \begin{align*}
    d_U(u,u') &\leq \sum_{i=0}^N r_i = \sum_{i=0}^N \left(s_i+\frac{\ep}{4(N+1)}\right) \\
    &\leq d_{\mu(\mcX)}((\pi_{\mcX}\circ p)(u), x) + \ep/2 + \ep/4 \\
    &< d_{\mu(\mcX)}((\pi_{\mcX}\circ p)(u), x) + \ep.
  \end{align*}
  By \cref{prop-Hfib-char} ``\ref{prop-Hfib-char-Hfib}\(\Leftrightarrow\)\ref{prop-Hfib-char-lift}'', this implies that \(\pi_{\mcX}\circ p\) is a \Hfib.
  This completes the proof of \cref{prop-nm-stack-gd-moduli}.
\end{proof}

To study the separatedness of good moduli spaces, we introduce the notion of separatedness for naive metric stacks.

\begin{definition}
  Let \(\mcX\) be a naive metric stack.
  We say that \(\mcX\) is \textbf{separated} if there exists a \fHfib \(p:X\to \mcX\) from a metric space \(X\in \Met\) such that for any object \(x\in \mcX(*)\), the subset \(p^{-1}(x)\dfn \left\{x'\in X\,\middle|\,p(x')\cong x\right\}\subset X\) is closed.
\end{definition}

\begin{lemma}\label{prop-sep-stack-char}
  Let \(\mcX\) be a naive metric stack.
  Then, the following assertions are equivalent:
  \begin{assertion}
    \item \label{prop-sep-stack-char-sep}
    \(\mcX\) is separated.
    \item \label{prop-sep-stack-char-gd-moduli-in-met}
    The good moduli space \(\mu(\mcX)\) is a metric space.
    \item \label{prop-sep-stack-char-fiber-cl}
    For any \fHfib \(p:X\to \mcX\) such that \(X\in \Met\) and any object \(x\in\mcX(*)\), the subset \(p^{-1}(x)\dfn \left\{x'\in X\,\middle|\,p(x')\cong x\right\}\subset X\) is closed.
  \end{assertion}
\end{lemma}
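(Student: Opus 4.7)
The plan is to prove the cycle ``\ref{prop-sep-stack-char-fiber-cl}\(\Rightarrow\)\ref{prop-sep-stack-char-sep}\(\Rightarrow\)\ref{prop-sep-stack-char-gd-moduli-in-met}\(\Rightarrow\)\ref{prop-sep-stack-char-fiber-cl}'', leveraging two structural inputs already available in this section: the bijection \(\mcX(*)/\!\simeq\,\tosim\uSet{\mu(\mcX)}\) from \cref{prop-good-moduli-structure} \ref{prop-good-moduli-structure-uset}, and the fact, supplied by \cref{prop-nm-stack-gd-moduli} together with \cref{prop-Hfib-for-stacks} \ref{prop-Hfib-for-stacks-met} \ref{prop-Hfib-for-stacks-comp}, that for any \fHfib \(p:X\to \mcX\) with \(X\in\Met\) the composite \(\pi_{\mcX}\circ p:X\to\mu(\mcX)\) is a \Hfib in \(\epMet\), and in particular a \(1\)-Lipschitz, hence continuous, map.

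The implication ``\ref{prop-sep-stack-char-fiber-cl}\(\Rightarrow\)\ref{prop-sep-stack-char-sep}'' will be immediate, since naive metricity of \(\mcX\) guarantees the existence of at least one qualifying \fHfib. For ``\ref{prop-sep-stack-char-gd-moduli-in-met}\(\Rightarrow\)\ref{prop-sep-stack-char-fiber-cl}'', I would observe that, by \cref{prop-good-moduli-structure} \ref{prop-good-moduli-structure-uset}, two objects of \(\mcX(*)\) are isomorphic iff they have the same image in \(\mu(\mcX)\); hence \(p^{-1}(x)=(\pi_{\mcX}\circ p)^{-1}(\pi_{\mcX}(x))\). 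Under hypothesis \ref{prop-sep-stack-char-gd-moduli-in-met} the singleton \(\{\pi_{\mcX}(x)\}\) is closed in the metric space \(\mu(\mcX)\), and continuity of \(\pi_{\mcX}\circ p\) closes the argument.

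The heart of the proof is ``\ref{prop-sep-stack-char-sep}\(\Rightarrow\)\ref{prop-sep-stack-char-gd-moduli-in-met}''. I would fix a \fHfib \(p:X\to \mcX\) witnessing separatedness; since \cref{prop-nm-stack-gd-moduli} already ensures \(\mu(\mcX)\in \PMet\), only the separation axiom remains. Given \(\xi_1,\xi_2\in \mu(\mcX)\) with \(d_{\mu(\mcX)}(\xi_1,\xi_2)=0\), I would lift each via \cref{prop-good-moduli-structure} \ref{prop-good-moduli-structure-uset} to \(x_i\in \mcX(*)\) and use essential surjectivity of \(p\) on \(*\) (built into the definition of a \fHfib) to pick \(u_1\in X\) with \(p(u_1)\cong x_1\). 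Applying \cref{prop-Hfib-char} ``\ref{prop-Hfib-char-Hfib}\(\Leftrightarrow\)\ref{prop-Hfib-char-ineq}'' to \(\pi_{\mcX}\circ p\) then yields, for every \(\ep>0\), a point \(u_\ep\in X\) with \((\pi_{\mcX}\circ p)(u_\ep)=\xi_2\) and \(d_X(u_1,u_\ep)<\ep\); \cref{prop-good-moduli-structure} \ref{prop-good-moduli-structure-uset} identifies such \(u_\ep\) with elements of \(p^{-1}(x_2)\), so \(u_1\) lies in the closure of that fiber. Separatedness forces \(u_1\in p^{-1}(x_2)\), whence \(x_1\cong x_2\) in \(\mcX(*)\) and \(\xi_1=\xi_2\).

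The main obstacle is precisely the weaving of the three ingredients in this last step: the pointwise bijection of \cref{prop-good-moduli-structure}, the submetry property of \(\pi_{\mcX}\circ p\), and the closed-fiber hypothesis. Each is available on its own, but converting the a~priori only pseudometric datum \(d_{\mu(\mcX)}(\xi_1,\xi_2)=0\) into the genuinely metric conclusion \(x_1\cong x_2\) requires attention to the distinction between isomorphism of objects in \(\mcX(*)\) and equality of their classes in the sheafification \(\pi_0(\mcX)\); fortunately \cref{prop-good-moduli-structure} \ref{prop-good-moduli-structure-uset} shows that, evaluated on the one-point space, these two notions coincide, which is what makes the whole argument go through.
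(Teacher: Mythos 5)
Your proposal is correct and follows essentially the same route as the paper: the same cyclic decomposition ``\ref{prop-sep-stack-char-fiber-cl}\(\Rightarrow\)\ref{prop-sep-stack-char-sep}\(\Rightarrow\)\ref{prop-sep-stack-char-gd-moduli-in-met}\(\Rightarrow\)\ref{prop-sep-stack-char-fiber-cl}'', with the key step reducing via \cref{prop-nm-stack-gd-moduli} and \cref{prop-Hfib-char} to the observation that a \Hfib onto a pseudometric space with a closed fiber over \(y'\) forces \(d(y,y')>0\). The paper packages that observation as a standalone claim proved by contradiction with a sequence \(d_X(x,x_n')<1/2^n\), whereas you argue directly with \(\ep\); the content is identical.
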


\begin{proof}
  The implication ``\ref{prop-sep-stack-char-fiber-cl}\(\Rightarrow\)\ref{prop-sep-stack-char-sep}'' follows immediately.
  We prove that ``\ref{prop-sep-stack-char-sep}\(\Rightarrow\)\ref{prop-sep-stack-char-gd-moduli-in-met}''.
  Assume that \(\mcX\) is a separated naive metric stack.
  Let \(p:X\to \mcX\) be a \fHfib from a metric space \(X\in\Met\) such that for any element \(x\in\mcX(*)\), the subset \(p^{-1}(\tilde{x})\dfn \left\{x'\in X\,\middle|\,p(x')\cong \tilde{x}\right\}\subset X\) is closed.
  By \cref{prop-nm-stack-gd-moduli}, \(\mu(\mcX)\) is a pseudometric space.
  Let \(x,x'\in \mu(\mcX)\) be distinct points.
  Then, by assumption, \((\pi_{\mcX}\circ p)^{-1}(x)\) and \((\pi_{\mcX}\circ p)^{-1}(x')\) are disjoint closed subsets of \(X\). 
  By \cref{prop-Hfib-for-stacks} \ref{prop-Hfib-for-stacks-comp} and \cref{prop-nm-stack-gd-moduli}, \(\pi_{\mcX}\circ p\) is a \fHfib.
  Hence, to prove that \(d_{\mu(\mcX)}(x,x')\neq 0\), it suffices to prove the following claim:
  \begin{claim}
    Let \(f:X\to Y\) be a \Hfib from \(X\in\Met\) to \(Y\in\PMet\) and \(y,y'\in Y\) distinct points.
    Assume that \(f^{-1}(y')\) is a closed subset of \(X\).
    Then, \(d_Y(y,y') > 0\). 
  \end{claim}
  \noindent
  Assume that \(d_Y(y,y')=0\).
  Let \(x\in f^{-1}(y)\) be a point and \(n\in\N\). 
  Since \(f\) is a \Hfib, there exists a point \(x'_n\in f^{-1}(y')\) such that \(d_X(x,x'_n) < 1/2^n\). 
  Then, \((x'_n)_{n\in \N}\) is a sequence in \(f^{-1}(y')\) and converges to \(x\in f^{-1}(y)\).
  Since \(f^{-1}(y')\) is closed, this implies that \(x\in f^{-1}(y')\). 
  In particular, \(y = f(x) = y'\). 
  This contradicts our assumption that \(y\) and \(y'\) are distinct. 
  This completes the proof of the claim and the implication ``\ref{prop-sep-stack-char-sep}\(\Rightarrow\)\ref{prop-sep-stack-char-gd-moduli-in-met}''.

  Next, we prove that ``\ref{prop-sep-stack-char-gd-moduli-in-met}\(\Rightarrow\)\ref{prop-sep-stack-char-fiber-cl}''.
  Assume that \(\mu(\mcX)\) is a metric space. 
  Let \(p:X\to \mcX\) be a \fHfib from a metric space \(X\in \Met\) and \(x\in\mcX(*)\) be an object. 
  Then, by \cref{prop-Hfib-for-stacks} \ref{prop-Hfib-for-stacks-comp} and \cref{prop-nm-stack-gd-moduli}, \(\pi_{\mcX}\circ p\) is a \Hfib.
  Since \(\mu(\mcX)\) is a metric space, \(\{\pi_{\mcX}(x)\}\subset \mu(\mcX)\) is closed. 
  Hence, \(p^{-1}(x) = (\pi_{\mcX}\circ p)^{-1}(\pi_{\mcX}(x))\subset X\) is closed.
  % This completes the proof of the implication ``\ref{prop-sep-stack-char-gd-moduli-in-met}\(\Rightarrow\)\ref{prop-sep-stack-char-fiber-cl}''.
  This completes the proof of \cref{prop-sep-stack-char}.
\end{proof}

\begin{corollary}\label{prop-good-is-coarse}
  Let \(\mcX\) be a separated naive metric stack.
  Then, the good moduli space \(\pi_{\mcX}:\mcX\to \mu(\mcX)\) of \(\mcX\) is a coarse moduli space. 
\end{corollary}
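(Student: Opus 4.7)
The plan is to observe that this corollary is essentially an immediate consequence of \cref{prop-sep-stack-char} combined with the definition of a coarse moduli space \confer{\cref{def-good-moduli}}. By \cref{def-good-moduli}, the good moduli space \(\pi_{\mcX}:\mcX\to \mu(\mcX)\) is a coarse moduli space precisely when \(\mu(\mcX)\in\Met\), i.e.\ when \(\mu(\mcX)\) is both a pseudometric space and an extended metric space.

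First, since \(\mcX\) is a naive metric stack, \cref{prop-nm-stack-gd-moduli} yields that \(\mu(\mcX)\in \PMet\); that is, no distance in \(\mu(\mcX)\) takes the value \(\infty\). (Although this follows independently of the separatedness assumption, it constitutes one half of the requirement \(\mu(\mcX)\in\Met = \PMet\cap\EMet\).)

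Next, by the implication ``\ref{prop-sep-stack-char-sep}\(\Rightarrow\)\ref{prop-sep-stack-char-gd-moduli-in-met}'' of \cref{prop-sep-stack-char}, the separatedness of \(\mcX\) directly gives that \(\mu(\mcX)\) is a metric space. In particular, \(\mu(\mcX)\in \EMet\) and, together with the previous paragraph, \(\mu(\mcX)\in \Met\). Applying \cref{def-good-moduli} completes the proof. There is essentially no obstacle here beyond correctly invoking the equivalences of \cref{prop-sep-stack-char}; the nontrivial content was already absorbed into that lemma and into \cref{prop-nm-stack-gd-moduli}.
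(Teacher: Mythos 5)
Your proof is correct and matches the paper's own argument, which likewise derives the corollary immediately from \cref{prop-nm-stack-gd-moduli} and the implication ``separated \(\Rightarrow\) \(\mu(\mcX)\in\Met\)'' of \cref{prop-sep-stack-char}. No issues.
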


\begin{proof}
  \Cref{prop-good-is-coarse} follows immediately from \cref{prop-nm-stack-gd-moduli} and \cref{prop-sep-stack-char}.
\end{proof}

At the end of this section, for the convenience of the reader, we briefly review some general facts about quotient stacks.

\begin{definition}\label{def-quot-stack}
  Let \(\mcR\tto \mcU\) be a groupoid object in \(\Sh_{\lhftop}(\tbMet)\). 
  We write \([\mcU/\!_p\mcR]\) for the category fibered in groupoids corresponding, via the Grothendieck construction, to the presheaf of groupoids obtained by the groupoid object \(\mcR\tto \mcU\) in \(\Sh_{\lhftop}(\tbMet)\).
  We write \([\mcU/\mcR]\) for the stackification of \([\mcU/\!_p\mcR]\). 
  We refer to \([\mcU/\mcR]\) as the \textbf{quotient stack} associated with \(\mcR\tto \mcU\).
  Note that the stack \([\mcU/\mcR]\) is the 2-coequalizer of the diagram \(\mcR\tto \mcU\) in the 2-category of the stacks in groupoids over \(\tbMet_{\lhftop}\). 
\end{definition}

\begin{remark}\label{rmk-quot-stack-sset}
  As noted in \Cref{rmk-why-naive} \ref{rmk-why-naive-BG}, the author believes that a more appropriate definition of a metric stack should take values in simplicial sets. 
  The reason for this is its compatibility with the construction of classifying spaces via the bar construction.

  Suppose that a metric space \(G\) is equipped with a monoid structure \(m:G\otimes G\to G\) and \(e:*\to G\) with respect to the monoidal product (where we consider the monoidal structure on \(\epMet\) as the \(\ell^1\)-product) such that the underlying monoid \((G,m,e)\) is a group.
  Assume further that \(G\) acts on a metric space \(X\) through a morphism \(a: G\otimes X\to X\).
  From a homotopy-theoretic viewpoint, such an action would lead one to consider the stackification (injective fibrant replacement) of the simplicial object \(BG\dfn (G^{\otimes n}\otimes X)_{n\in \N}\) in \(\Sh_{\lhftop}(\tbMet)\) which arises from the bar construction applied to the action \(a:G\otimes X\to X\). 

  Moreover, \(BG\) is expected to serve as the classifying space of principal \(G\)-bundles. 
  To state this precisely, one needs an appropriate definition of a principal \(G\)-bundle. 
  Here, the author thinks it is effective, rather than starting from an abstract definition, to keep in mind the definition of \(BG\) via the bar construction and define principal \(G\)-bundles in a way that ensures \(BG\) indeed serves as their classifying space.
  For instance, consider a \(G\)-equivariant morphism of sheaves \(p:\mcP\to\mcF\) such that the \(G\)-action on \(\mcF\) is trivial, and moreover the induced left adjoint \(\sSh_{\lhftop}(\tbMet)^{BG}\to \sSh_{\lhftop}(\tbMet)_{/\mcF}\) (where \(\sSh_{\lhftop}(\tbMet)\) denotes the category of simplicial sheaves on \(\tbMet\) with respect to the \(\lhftop\)-topology and \(\sSh_{\lhftop}(\tbMet)^{BG}\) denotes the category of simplicial sheaves on \(\tbMet\) with a \(BG\)-action), perhaps this left adjoint should be regarded as a left adjoint in the \(\sSh_{\lhftop}(\tbMet)\)-enriched sense, is left exact. 
  If one calls such a morphism \(p\) \textbf{a principal \(G\)-bundle over \(\mcF\)}, then, by an analogue of Diaconescu’s theorem (cf.~e.g.~\cite[{Theorem~2.2}]{Moerdijk-classifying}) for classifying topoi, it is reasonable to expect that \(BG\) serves, in the topos-theoretic sense, as the classifying space of principal \(G\)-bundles.
  However, the primary aim of the present paper is the construction and investigation of fine moduli for compact metric spaces. 
  We therefore refrain from pursuing possible higher-categorical enhancement in this direction. 
\end{remark}

\begin{lemma}\label{prop-quot-stack-presentation}
  Let \(\mcX\) be a stack in groupoids over \(\tbMet_{\lhftop}\), \(\mcU\) a sheaf on \(\tbMet_{\lhftop}\), and \(f:\mcU\to \mcX\) a morphism of stacks in groupoids over \(\tbMet_{\lhftop}\). 
  Define \(\mcR\dfn \mcU\times_{f,\mcX,f}\mcU\), where we note that, since \(\mcX\) is a stack in groupoids over \(\tbMet_{\lhftop}\), the fiber product \(\mcR\) is a sheaf with respect to the \(\lhftop\)-topology.
  Write \(\pr_1,\pr_2:\mcR\tto \mcU\) for the natural projections and \(\eta:f\circ \pr_1\tosim f\circ \pr_2\) for the natural 2-morphism.
  Then, the following assertions hold:
  \begin{assertion}
    \item \label{prop-quot-stack-presentation-ff}
    The natural morphism \(\bar{f}: [\mcU/\mcR]\to \mcX\) of stacks in groupoids over \(\tbMet_{\lhftop}\) induced by \((\pr_1, \pr_2, \eta)\) is fully faithful. 
    \item \label{prop-quot-stack-presentation-eq}
    The natural morphism \(\bar{f}: [\mcU/\mcR]\to \mcX\) of stacks in groupoids over \(\tbMet_{\lhftop}\) induced by \((\pr_1, \pr_2, \eta)\) is an equivalence if and only if for any object \(T\in \tbMet\) and any morphism \(p:T\to \mcX\), there exist an \covlhf \(\{q_i:U_i\to T\}_{i\in I}\), a family of morphisms \(\{p_i:U_i\to \mcU\}_{i\in I}\), and a family of 2-morphisms \(\{\alpha_i:f\circ p_i\tosim p\circ q_i\}_{i\in I}\). 
  \end{assertion}
\end{lemma}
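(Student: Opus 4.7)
The strategy is to establish (1) by comparing Isom sheaves on both sides, and then to deduce (2) from (1) combined with the standard fact that a fully faithful morphism of stacks is an equivalence if and only if it is essentially surjective. The starting observation for (1) is that, by the very definition of $\mcR=\mcU\times_{f,\mcX,f}\mcU$ as a 2-fibre product of stacks in groupoids, the prestack-level groupoid $[\mcU/\!_p\mcR](T)$ has object set $\mcU(T)$ and, for $u,u'\in \mcU(T)$,
\[
  \Hom_{[\mcU/\!_p\mcR](T)}(u,u')\;=\;\mcR(T)\times_{(\mcU\times\mcU)(T)}\{(u,u')\}\;=\;\Isom_{\mcX(T)}(f(u),f(u')).
\]
Hence the canonical morphism $[\mcU/\!_p\mcR]\to \mcX$ induced by $(\pr_1,\pr_2,\eta)$ is bijective on each Hom-set at the prestack level, and by the universal property of stackification it factors uniquely through $\bar{f}:[\mcU/\mcR]\to \mcX$.

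For the fully faithfulness of $\bar{f}$, I would argue that, for arbitrary $T$ and $x,y\in [\mcU/\mcR](T)$, both $\inIsom_{[\mcU/\mcR]}(x,y)$ and $\inIsom_{\mcX}(\bar{f}(x),\bar{f}(y))$ are sheaves on $\tbMet_{\lhftop}$ restricted to $\tbMet_{/T}$, the first by the stack axioms for $[\mcU/\mcR]$ and the second by \cref{lem: lhf subcan} together with the stack axioms for $\mcX$. Passing to an lsm cover $\{U_i\to T\}$ trivialising the descent data that present $x$ and $y$---that is, on which $x|_{U_i}$ and $y|_{U_i}$ arise from objects $x_i,y_i\in \mcU(U_i)$---the prestack computation above identifies both sheaves with $\inIsom_{\mcX}(f(x_i),f(y_i))$ in a compatible way, so the induced map of sheaves is a local, hence global, isomorphism. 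This proves (1).

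For (2), using (1) one has that $\bar{f}$ is an equivalence iff it is essentially surjective in the 2-categorical sense: for every $p\in\mcX(T)$ there exists an lsm cover $\{q_i:U_i\to T\}$ such that each $p\circ q_i$ lies in the essential image of $\bar{f}_{U_i}:[\mcU/\mcR](U_i)\to \mcX(U_i)$. Because $[\mcU/\mcR]$ is the stackification of $[\mcU/\!_p\mcR]$, every object of $[\mcU/\mcR](U_i)$ becomes, after a further lsm refinement of $U_i$, isomorphic to the image of an object of $\mcU$; by replacing $\{U_i\to T\}$ with its composition with this refinement (and invoking \cref{lem: top on Met} \ref{lem: top on Met ass: comp}), essential surjectivity is equivalent to the stated local-lifting condition. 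Conversely, given $(q_i,p_i,\alpha_i)$ as in the statement, the comparison isomorphisms $\alpha_j^{-1}\circ \alpha_i$ on the fibre products $U_i\times_T U_j$ give elements of $\mcR(U_i\times_T U_j)$ satisfying the cocycle condition by construction, hence assemble into an object of $[\mcU/\mcR](T)$ mapping isomorphically to $p$ under $\bar{f}$. The main subtlety I anticipate is keeping the stackification step in (1) clean---specifically, verifying compatibly that the local identification of Isom sheaves does not depend on the chosen trivialising cover---but this is handled uniformly by working throughout with \(\inIsom\)-sheaves and by repeated appeal to \cref{lem: lhf subcan}.
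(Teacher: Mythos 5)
Your proof is correct and is essentially the paper's approach made explicit: the paper's own proof consists only of citations to \cite[Tags 04WQ and 046N]{stacks-project}, and your argument--identifying the prestack-level Hom-sets with \(\Isom_{\mcX(T)}(f(u),f(u'))\) via the definition of \(\mcR\), noting that stackification preserves fully faithfulness because these Isom presheaves are already sheaves, and then invoking the criterion that a fully faithful morphism of stacks is an equivalence if and only if it is locally essentially surjective--is precisely the standard argument behind those citations. No gaps to report.
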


\begin{proof}
  \Cref{prop-quot-stack-presentation-ff} follows immediately from abstract nonsense in the theory of stacks \confer{\cite[\href{https://stacks.math.columbia.edu/tag/04WQ}{Tag 04WQ}]{stacks-project}}, once one recalls the definition of the sheaf \(\mcR\). % given the definition of the sheaf ???
  \Cref{prop-quot-stack-presentation-eq} follows immediately from \cref{prop-quot-stack-presentation-ff}, together with further abstract nonsense in the theory of stacks \confer{\cite[\href{https://stacks.math.columbia.edu/tag/046N}{Tag 046N}]{stacks-project}}. 
  This completes the proof of \cref{prop-quot-stack-presentation}.
\end{proof}

\begin{definition}\label{def-of-quot-type}
  Let \(\mcX\) be a naive metric stack.
  We say that \(\mcX\) is \textbf{of quotient type} if there exists a \fHfib \(p:U\to \mcX\) from a metric space \(U\in\Met\) such that for any object \(T\in\tbMet\) and any object \(t\in\mcX(T)\), there exists an \covlhf \(\{q_i:U_i\to T\}_{i\in I}\) such that for each index \(i\in I\), the object \(q_i^*t\in\mcX(U_i)\) lies in the essential image of \(U(U_i)\to \mcX(U_i)\). 
  Note that by \cref{prop-quot-stack-presentation} \ref{prop-quot-stack-presentation-ff} \ref{prop-quot-stack-presentation-eq}, \(\mcX\) is a naive metric stack of quotient type if and only if there exists a \fHfib \(p:U\to \mcX\) from a metric space \(U\in\Met\) such that the induced morphism \([U/\mcR]\to \mcX\), where \(\mcR\dfn U\times_{\mcX}U\in\Sh_{\lhftop}(\tbMet)\), is an equivalence of stacks in groupoids over \(\tbMet_{\lhftop}\). 
\end{definition}

\section{The Gromov--Hausdorff Stack}
\label{section: GH}

In this section, we define a moduli stack of \(n\)-pointed compact metric spaces \(\stGH_n\) and investigate its fundamental properties. 
In particular, we prove that \(\stGH\) is a naive metric stack \confer{\cref{thm: stGH is a naive metric stack}} whose coarse moduli space is isometric to the usual Gromov--Hausdorff space \(\topGH\) \confer{\cref{prop-metGH-is-usual-GH}}, and furthermore, that \(\stGH\) can be realized as a quotient stack of the space of metric spaces by a certain sheaf \confer{\cref{prop-stGH-quot}}.
Finally, we prove the statement asserting that \(\stGH\) is a ``fine moduli'' of compact metric spaces \confer{\cref{prop-fine-moduli-GH}}.

\begin{definition}\label{def-stGH}
  \
  \begin{enumerate}
    \item \label{def-stGH-stGH}
    We define a category \(\stGH\) as the full subcategory of \(\stAllMet\) \confer{\cref{def-stack-all-met}} consisting of those proper \Hfib \(p:P\to X\) whose target \(X\) lies in \(\tbMet\). 
    We write \(\pi:\stGH\to \tbMet\) for the composite of \(\stGH\subset \stAllMet\xto{\cod}\tbMet\). 
    Note that by \cref{prop-Hfib-basic} \ref{prop-Hfib-basic-bc}, \(\pi:\stGH\to\tbMet\) is a category fibered in groupoids.
    \item
    Let \(n\in \N\).
    We write \(\stGH_n\) for the category defined as follows:
    \begin{itemize}
      \item
      An object of \(\stGH_n\) is the pair \((p:P\to X, (s_i:X\to P)_{i=1}^n)\) of an object \(p:P\to X\) of \(\stGH_n\) and sections \(s_i:X\to P\) (\(i\in \{1,\cdots,n\}\)) of \(p\).
      \item
      A morphism
      \[(p':P'\to X', (s_i':X'\to P')_{i=1}^n)\to (p:P\to X, (s_i:X\to P)_{i=1}^n)\]
      in \(\stGH_n\) is a morphism \((g:P'\to P, f:X'\to X)\) in \(\stGH\) such that for each \(i\in\{1,\cdots,n\}\), it holds that \(g\circ s_i' = s_i\circ f\).
    \end{itemize}
    We regard \(\stGH_n\) as a category fibered in groupoids over \(\Met\) by the functor \((p:P\to X, (s_i:X\to P)_{i=1}^n)\mapsto X\).
    Note that \(\stGH_0 = \stGH\).
    \item
    Let \(n\in \N\) be an integer.
    We write
    \begin{align*}
      p^{\stGH}_n:\stGH_{n+1}&\to \stGH_n, \\
      (p, (s_1,\cdots,s_{n+1})) &\mapsto (p, (s_1,\cdots,s_n)).
    \end{align*}
    For each \(i\in \{1,\cdots,n\}\), we write
    \begin{align*}
      s_{i}^{\stGH}:\stGH_n &\to \stGH_{n+1}, \\
      (p, (s_1,\cdots,s_n)) &\mapsto (p, (s_1,\cdots,s_n,s_i)).
    \end{align*}
    Then, it holds that \(p^{\stGH}_n\circ s^{\stGH}_i = \id_{\stGH_n}\).
  \end{enumerate}
\end{definition}

\begin{lemma}\label{prop-proper-Hfib-is-lhf-local}
  Let \(B\in\tbMet\) be an object, \(\left\{f_i:U_i\to B\right\}_{i\in I}\) an \covlhf, and \(p:X\to B\) a morphism in \(\Met\). 
  Then, \(p\) is a proper \Hfib if and only if for each \(i\in I\), the natural projection \(p_i:X_i\dfn X\times_BU_i\to U_i\) is a proper \Hfib.
\end{lemma}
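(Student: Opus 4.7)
The plan is to verify both implications.

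For necessity, if \(p\) is a proper \Hfib, then each \(p_i\) is a \Hfib by stability under base change \confer{\cref{prop-Hfib-basic} \ref{prop-Hfib-basic-bc}}, and for any \(u\in U_i\) the projection to the first factor yields a bijective isometry \(p_i^{-1}(u)\tosim p^{-1}(f_i(u))\) whose target is compact; hence every fiber of \(p_i\) is compact, and \cref{prop: Hfib fiberwise cpt prop} makes \(p_i\) proper.

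For sufficiency, assume each \(p_i\) is a proper \Hfib. To show \(p\) is a \Hfib, I invoke the characterization \cref{prop-Hfib-char} ``\ref{prop-Hfib-char-Hfib}\(\Leftrightarrow\)\ref{prop-Hfib-char-ineq}''. Given \(x\in X\), \(b'\in B\), and \(\ep>0\), I apply \cref{defi: cov in Met} \ref{defi: cov in Met enumi: lhf condi: 3pt} to the triple \((p(x), b', b')\) to obtain an index \(i\in I\) and points \(u, u'\in U_i\) with \(f_i(u)=p(x)\), \(f_i(u')=b'\), and \(d_{U_i}(u,u') < d_B(p(x), b') + \ep/2\). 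The point \((x,u)\) lies in \(X_i = X\times_B U_i\); since \(p_i\) is a \Hfib, there is \(x'\in X\) with \((x', u')\in X_i\) and \(d_{X_i}((x,u), (x',u')) \leq d_{U_i}(u,u') + \ep/2\). By \cref{lem: lim and colim in PEMet} \ref{lem: lim and colim in PEMet ass: lim}, the fiber-product distance on \(X_i\) is the supremum of the distances in \(X\) and \(U_i\), so \(d_X(x, x') \leq d_{X_i}((x,u), (x',u')) < d_B(p(x), b') + \ep\) while \(p(x') = f_i(u') = b'\). To show \(p\) is proper, by \cref{prop: Hfib fiberwise cpt prop} it suffices to check that each fiber of \(p\) is compact: given \(b\in B\), by \cref{rmk: cov in Met} \ref{rmk: cov in Met enumi: surj} I find \(i\in I\) and \(u\in U_i\) with \(f_i(u)=b\), and the projection \(p_i^{-1}(u)\to p^{-1}(b)\) is then a bijective isometry whose domain is compact.

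The argument presents no substantive obstacle; the only minor subtlety is converting the nominally 3-point lifting condition of the lsm covering into a 2-point lift of \((p(x), b')\), which is handled trivially by padding with a dummy third point. The fact that the sup-distance on the fiber product cleanly decouples the vertical and horizontal data is what makes the descent go through without error terms compounding.
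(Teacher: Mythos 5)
Your proposal is correct and follows essentially the same route as the paper's proof: necessity via base-change stability of \Hfibs and the fiberwise isometry, and sufficiency by extracting a two-point lift from condition \ref{defi: cov in Met enumi: lhf condi: 3pt} of \cref{defi: cov in Met}, lifting through \(p_i\) using \cref{prop-Hfib-char}, projecting back to \(X\) via the sup-metric on the fiber product, and finishing with \cref{prop: Hfib fiberwise cpt prop}. Your explicit verification that the fibers of \(p\) are compact is a detail the paper leaves implicit, but the argument is the same.
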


\begin{proof}
  Necessity follows immediately from \cref{prop-Hfib-basic} \ref{prop-Hfib-basic-bc}.
  Next, we prove sufficiency.
  Assume that for each \(i\in I\), the natural projection \(p_i:X_i\dfn X\times_BU_i\to U_i\) is a proper \Hfib.

  To prove that \(p\) is a \Hfib, let \(x\in X\) be a point, \(b\in B\) a point, and \(\ep > 0\) a positive real number.
  Since \(\left\{f_i:U_i\to B\right\}_{i\in I}\) is an \covlhf, it follows from \cref{defi: cov in Met} \ref{defi: cov in Met enumi: lhf condi: 3pt} that there exist an index \(i\in I\) and points \(u_0, u_1\in U_i\) such that \(f_i(u_0) = p(x)\), \(f_i(u_1) = b\), and \(d_{U_i}(u_0, u_1) < d_B(p(x), b) + \ep/2\). 
  Write \(\tilde{x}\dfn (x, u_0)\in X_i = X\times_BU_i\). 
  Since \(p_i:X_i\to U_i\) is a proper \Hfib, it follows from \cref{prop-Hfib-char} \ref{prop-Hfib-char-ineq} that there exists a point \(\tilde{x}_1\in X_i\) such that \(d_{X_i}(\tilde{x}, \tilde{x}_1) < d_{U_i}(u_0, u_1) + \ep/2\). 
  Write \(\tilde{x}'\in X\) for the image of \(\tilde{x}_1\in X_i\) under the natural morphism \(X_i = X\times_BU_i\to X\). 
  Then, it holds that 
  \[d_X(x, \tilde{x}') \leq d_{X_i}(\tilde{x}, \tilde{x}_1) < d_{U_i}(u_0, u_1) + \ep/2 < d_B(p(x), b) + \ep.\]
  This implies that \(p\) is a \Hfib.
  By applying \cref{prop: Hfib fiberwise cpt prop}, we conclude that \(p\) is proper.
  This completes the proof of \cref{prop-proper-Hfib-is-lhf-local}.
\end{proof}

\begin{corollary}\label{prop-stGHn-is-a-stack}
  For each \(n\in \N\), \(\stGH_n\) is a stack in groupoids over \(\tbMet_{\lhftop}\). 
\end{corollary}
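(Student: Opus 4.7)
The plan is to deduce the statement from what is already established: that \(\stAllMet\) is a stack in groupoids over \(\tbMet_{\lhftop}\) by \cref{cor: Mor is a stack}, that the property of being a proper \Hfib is stable under base change by \cref{prop-Hfib-basic}~\ref{prop-Hfib-basic-bc} and local in the lsm topology by \cref{prop-proper-Hfib-is-lhf-local}, and that the lsm topology is subcanonical by \cref{lem: lhf subcan}~\ref{lem: lhf subcan ass: subcan}. I would first treat the base case \(n = 0\) and then handle the additional sections by induction on \(n\).

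\textbf{Case \(n = 0\).} Since \(\stGH\) is the full subcategory of \(\stAllMet\) cut out by the condition that the structure morphism \(p : P \to X\) is a proper \Hfib, and since this condition is closed under pullback, the restriction \(\pi : \stGH \to \tbMet\) inherits the fibered-in-groupoids structure from \(\stAllMet\). The \(\inIsom\)-presheaves of \(\stGH\) coincide with those of \(\stAllMet\) on pairs of objects of \(\stGH\), hence are sheaves by \cref{cor: Mor is a stack}. For effective descent, an \(\lhftop\)-descent datum in \(\stGH\) along an lsm covering \(\{f_i : U_i \to X\}_{i \in I}\) is in particular an effective descent datum in \(\stAllMet\), and so produces some morphism \(p : P \to X\) in \(\Met\) with each pullback \(P \times_X U_i \to U_i\) canonically isomorphic to the \(i\)-th member of the datum. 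Each such pullback is a proper \Hfib by hypothesis, so \cref{prop-proper-Hfib-is-lhf-local} forces \(p\) itself to be a proper \Hfib. Hence the datum is effective in \(\stGH\).

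\textbf{Inductive step.} For \(n \geq 1\), assuming \(\stGH_{n-1}\) is a stack, I would use the forgetful functor \(p^{\stGH}_{n-1} : \stGH_n \to \stGH_{n-1}\). An object of \(\stGH_n\) over \(X\) is an object \((p : P \to X, (s_1, \dots, s_{n-1})) \in \stGH_{n-1}(X)\) together with a further section \(s_n : X \to P\) of \(p\), and the fibration structure on \(\stGH_n \to \tbMet\) is inherited via pullback of sections. Given an lsm covering \(\{U_i \to X\}_{i \in I}\), a descent datum in \(\stGH_n\) consists of an effective descent datum in \(\stGH_{n-1}\) (descending to some \((p : P \to X, (s_1,\dots,s_{n-1}))\) by the inductive hypothesis) together with a compatible family of sections \(s_n^{(i)} : U_i \to P \times_X U_i\) of the pulled-back structure morphism. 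The presheaf on \(\tbMet_{\lhftop}/X\) of sections of \(p\), namely the equalizer of \(p \circ (-) : \Hom(-, P) \to \Hom(-, X)\) and the structure map to \(X\), is a sheaf: the two representables are sheaves by \cref{lem: lhf subcan}~\ref{lem: lhf subcan ass: subcan}, and equalizers of sheaves are sheaves. Hence the family \((s_n^{(i)})_{i \in I}\) glues uniquely to a section \(s_n : X \to P\), completing effective descent in \(\stGH_n\). The isomorphism sheaf condition is inherited from \(\stGH_{n-1}\) together with the fact that sections are determined uniquely once they are determined on an lsm cover.

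\textbf{Main obstacle.} The only content that is not purely formal is Step~1, where one must ensure that a morphism obtained from descent in \(\stAllMet\) remains a proper \Hfib. This is exactly what \cref{prop-proper-Hfib-is-lhf-local} was designed to supply; once that lemma is accepted, the rest of the argument is organizational. The treatment of sections in the inductive step introduces no new geometric content, as it reduces entirely to subcanonicity of the lsm topology.
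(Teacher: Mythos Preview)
Your proposal is correct and uses essentially the same ingredients as the paper's proof: the paper simply states that the result follows from \cref{lem: lhf subcan}~\ref{lem: lhf subcan ass: isom sh}, \cref{lem: eff descent}, and \cref{prop-proper-Hfib-is-lhf-local}, handling all \(n\) at once, while you unpack this by first routing the \(n=0\) case through \cref{cor: Mor is a stack} (which already packages the first two of those lemmas) and then dealing with the extra sections by an explicit induction using subcanonicity. The only organizational difference is your inductive treatment of the sections versus the paper's uniform appeal to the same descent lemmas; neither approach introduces an idea the other lacks.
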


\begin{proof}
  Since \(\stGH_n\) is a category fibered in groupoids over \(\tbMet\), \cref{prop-stGHn-is-a-stack} follows immediately from \cref{lem: lhf subcan} \ref{lem: lhf subcan ass: isom sh}, \cref{lem: eff descent}, and \cref{prop-proper-Hfib-is-lhf-local}.
\end{proof}

Next, we verify that \(\stGH\) is a naive metric stack.
To this end, we use a universal property of the Urysohn universal space.

\begin{definition}
  We write \(\Urys\) for the Urysohn universal space.
  The \textbf{Urysohn universal space} \(\Urys\) is a separable and complete metric space satisfying the following condition:
  \begin{condition*}
    \item
    For any isometric embedding \(i:A\inj X\) of metric spaces with \(\#(A),\#(X) < \infty\), and any isometric embedding \(f:A\inj \Urys\), there exists an isometric embedding \(\tilde{f}: X\inj \Urys\) such that \(f = \tilde{f}\circ i\).
  \end{condition*}
  Note that by \cite[{3.11.{\small \(\frac{2}{3}_+\)}, Corollary~(5)}]{Gromov}, the Urysohn universal space exists and is unique up to isometry in \(\Met\).
  We write \(\Cpt(\Urys)\dfn \Cpt(\Urys/*)\) \confer{\cref{def-inCpt}}.
\end{definition}

\begin{remark}\label{rmk-prop-Ury-univ}
  Let \(U\) be the Urysohn universal space, \(K\) a compact metric space, \(i:A\inj K\) a closed isometric embedding, and \(f:A\inj U\) an isometric embedding.
  Then, by \cite[{3.11.{\small \(\frac{2}{3}_+\)}, p.~89 Exercises~(b)}]{Gromov}, there exists an isometric embedding \(g:K\inj U\) such that \(g\circ i = f\). 
\end{remark}

\begin{theorem}\label{thm: stGH is a naive metric stack}
  For any object \(T\in\tbMet\), define 
  \begin{align*}
    p_T:\inCpt(\Urys/*)(T) &\to \stGH(T), \\
    [A\subset \Urys\times T] &\mapsto [A\to T].
  \end{align*}
  Then, the family of maps \((p_T)_{T\in\tbMet}\) defines a morphism \(p:\inCpt(\Urys/*)\to\stGH\) of stacks in groupoids over \(\tbMet_{\lhftop}\). 
  Moreover, the corresponding morphism \(\pi:\Cpt(\Urys)\to \stGH\), via the Yoneda lemma \confer{\cref{prop: cpt repble}}, is a \fHfib. 
  In particular, \(\stGH\) is a naive metric stack.
\end{theorem}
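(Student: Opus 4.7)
The plan is as follows. The naturality of $p_T$ in $T$ follows because the base change of a proper submetry $A\subset \Urys\times T\to T$ along a morphism $T'\to T$ in $\tbMet$ is the proper submetry $A\times_T T'\subset \Urys\times T'\to T'$ (by \cref{prop-Hfib-basic} \ref{prop-Hfib-basic-bc}), and forgetting the ambient embedding into $\Urys$ commutes with base change. Combined with the representability result \cref{prop: cpt repble}, this produces the morphism $\pi:\Cpt(\Urys)\to \stGH$ of stacks in groupoids over $\tbMet_{\lhftop}$.

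To show that $\pi$ is a formal submetry, I first verify essential surjectivity of $\pi_*:\Cpt(\Urys)(*)\to \stGH(*)$: any compact metric space $K$ embeds isometrically into $\Urys$ by its universal property, and the resulting compact subset of $\Urys$ is a point of $\Cpt(\Urys)(*)$ whose image under $\pi_*$ is $[K]$. For the lifting condition of \cref{def-Hfib-for-stacks}, fix $r>s>0$, $x:*\to \Cpt(\Urys)$ corresponding to a compact subset $K\subset \Urys$, $g:2_s\to \stGH$ corresponding (via the Yoneda lemma) to a proper submetry $p:P\to 2_s$ (with $P$ compact by \cref{prop: Hfib fiberwise cpt prop}), and $\alpha:\pi\circ x\tosim g\circ \iota_{0s}$ an isometry $\phi:K\tosim K_0\dfn p^{-1}(0)$. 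By \cref{prop: cpt repble}, constructing the desired $h:2_r\to \Cpt(\Urys)$ amounts to exhibiting compact subsets $A_0, A_r\subset \Urys$ with $\dH_{\Urys}(A_0, A_r)\leq r$.

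I would take $A_0\dfn K$ and invoke \cref{rmk-prop-Ury-univ}: since $K_0\subset P$ is a closed subspace of the compact metric space $P$ and $\phi^{-1}:K_0\hookrightarrow \Urys$ is an isometric embedding, it extends to an isometric embedding $\tilde\psi:P\hookrightarrow \Urys$ with $\tilde\psi|_{K_0}=\phi^{-1}$. Setting $A_r\dfn \tilde\psi(p^{-1}(s))$, the isometric property of $\tilde\psi$ yields $\dH_{\Urys}(A_0, A_r)=\dH_P(K_0, p^{-1}(s))\leq s<r$. The 2-morphism $\beta:h\circ \iota_{0r}\tosim x$ is the identity (since $A_0=K$), and $\gamma:\pi\circ h\tosim g\circ \iota_{rs}$ is induced by $\tilde\psi^{-1}$, which identifies the proper submetry $A_0\sqcup A_r\subset \Urys\times 2_r$ with the pullback $P\times_{2_s}2_r$ over $2_r$. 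The compatibility $\alpha\circ (\pi\star\beta)=\gamma\star \iota_{0r}$ then holds since, restricting to $0\in 2_r$, one has $\tilde\psi^{-1}|_{A_0}=\phi=\alpha$. The conclusion that $\stGH$ is a naive metric stack follows at once from \cref{def-naive-metric-stack}.

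The principal technical subtlety I anticipate is the bookkeeping of the Yoneda correspondence under the $\ell^\infty$-product convention on $\Urys\times 2_r$: one must verify that the cross-fiber distance $\max(d_{\Urys}(-,-),r)$ in $A_0\sqcup A_r\subset \Urys\times 2_r$ matches the cross-fiber distance $\max(d_P(-,-),r)$ in $P\times_{2_s}2_r$ via $\tilde\psi$, and that the induced isomorphism at the fiber over $0$ agrees with $\alpha$. Both follow from the isometric property of $\tilde\psi$, but the identifications need to be made carefully.
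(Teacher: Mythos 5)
Your proposal is correct and follows essentially the same route as the paper: the key step in both is to use \cref{rmk-prop-Ury-univ} to extend the isometric embedding of the fibre over \(0\) (transported along \(\alpha\)) to an isometric embedding of the whole total space of the family into \(\Urys\), and then read off the lift \(h:2_r\to\Cpt(\Urys)\) via \cref{prop: cpt repble}. Your extra care about the \(\ell^{\infty}\)-product and the fibre-over-\(0\) compatibility is sound and matches the paper's identification \(\gamma:K\tosim\im((j,p))\).
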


\begin{proof}
  The first assertion follows immediately from the definition of the family of maps \((p_T)_{T\in\tbMet}\).
  Let
  \[\begin{tikzcd}
    {*} \ar[r, "{[i]}"] \ar[d, hook, "{\iota_0}"'] & 
    {\Cpt(\Urys)} \ar[d, "{\pi}"] \ar[ld, Rightarrow, "{[\alpha]}"', shorten=4mm]  \\
    {2_r} \ar[r, "{[p]}"'] & {\stGH}
  \end{tikzcd}\]
  be a 2-commutative diagram.
  Write \(p:K\to 2_r\) for the proper \Hfib corresponding to \([p]\) via the 2-Yoneda lemma; 
  \(i:K_0\inj \Urys\) for an isometric embedding corresponding to \([i]\) via the Yoneda lemma \confer{\cref{prop: cpt repble}}; 
  \(\alpha:K_0\tosim p^{-1}(0)\) for the isometry corresponding to \([\alpha]\) via the 2-Yoneda lemma; 
  \(\tilde{\alpha}:K_0\inj K\) for the composite of \(\alpha\) and the inclusion \(p^{-1}(0)\subset K\). 
  By \cref{rmk-prop-Ury-univ}, there exists an isometric embedding \(j:K\to \Urys\) such that \(i = j\circ \tilde{\alpha}\).
  Since \(p\) is a \Hfib, the isometric embedding \(j\) corresponds, via the Yoneda lemma \confer{\cref{prop: cpt repble}}, to a morphism \([j]:2_r\to \Cpt(\Urys)\).
  Since \(i = j\circ \tilde{\alpha}\), it holds that \([i] = [j]\circ \iota_0\). 
  Write \(\gamma:K\tosim \im((j,p):K\to \Urys\times 2_r)\) for the induced isomorphism.
  Then, by the definitions of \(\pi\) and \([j]\), the isomorphism \(\gamma\) corresponds, via the 2-Yoneda lemma, to a 2-morphism \([\gamma]:\pi\circ [j]\tosim [p]\) such that \([\gamma]\star \iota_0 = [\alpha]\). 
  In particular, \(\pi\) is a \fHfib.
  This completes the proof of \cref{thm: stGH is a naive metric stack}.
\end{proof}

\begin{definition}\label{def-Urys-chart}
  We call the morphism \(\pi_{\Urys}:\Cpt(\Urys)\to \stGH\) of stacks in groupoids over \(\tbMet_{\lhftop}\), constructed in \cref{thm: stGH is a naive metric stack}, the \textbf{Urysohn chart}. 
\end{definition}

Next, we verify the good moduli space of \(\stGH\) is isometric to the usual Gromov--Hausdorff space.

\begin{definition}
  We write \(\pi_{\stGH}:\stGH\to \metGH\) for the good moduli space of \(\stGH\).
\end{definition}

\begin{theorem}\label{prop-metGH-is-usual-GH}
  The following assertions hold:
  \begin{assertion}
    \item \label{prop-metGH-is-usual-GH-cpt-isomet}
    A point of \(\metGH\) corresponds to an isometry class of compact metric spaces.
    For any compact metric space \(C\), we write \([C]\in \metGH\) for the point corresponding to the isometry class of \(C\). 
    \item \label{prop-metGH-is-usual-GH-dist-GH}
    For any compact metric spaces \(C\) and \(D\), the distance \(d_{\metGH}([C], [D])\) is equal to the Gromov--Hausdorff distance \confer{\cite[{Definition~7.3.10}]{BBI}} between \(C\) and \(D\). 
  \end{assertion}
  In particular, \(\metGH\) is isometric to the Gromov--Hausdorff space \confer{\cite[{after the proof of Theorem~7.3.30}]{BBI}}.
\end{theorem}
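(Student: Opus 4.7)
I would split the proof along the two assertions. For \ref{prop-metGH-is-usual-GH-cpt-isomet}, I unpack the definitions: the fiber $\stGH(*)$ consists of proper \Hfibs $P \to *$, which by surjectivity and \cref{prop: Hfib fiberwise cpt prop} are exactly the nonempty compact metric spaces, with morphisms being isometries. Then \cref{prop-good-moduli-structure}~\ref{prop-good-moduli-structure-uset} yields a bijection between $\uSet{\metGH}$ and the set of isometry classes of nonempty compact metric spaces.

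For \ref{prop-metGH-is-usual-GH-dist-GH}, the strategy is to translate morphisms $2_r \to \stGH$ into geometric data and compare the resulting infimum with the classical Gromov--Hausdorff distance. Via the 2-Yoneda lemma, such a morphism is the same as a proper \Hfib $p\colon P \to 2_r$; by \cref{prop: Hfib fiberwise cpt prop} the space $P$ is a nonempty compact metric space, and the fibers $C_0 := p^{-1}(0)$ and $C_1 := p^{-1}(r)$ equipped with the subspace metric are nonempty compact metric spaces satisfying $\dH_P(C_0, C_1) = r$ (the inequality ``$\leq r$'' from the submetry condition, and ``$\geq r$'' from the $1$-Lipschitz property of $p$).

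The core auxiliary claim is that for any nonempty compact metric spaces $C, D$,
\[
  \inf\bigl\{r > 0 \,\bigm|\, \exists\text{ a proper \Hfib } p\colon P\to 2_r \text{ with } p^{-1}(0)\cong C,\ p^{-1}(r)\cong D\bigr\}=d_{GH}(C, D),
\]
where $d_{GH}$ denotes the classical Gromov--Hausdorff distance. The inequality ``$\geq$'' is immediate from the translation above: such a $P$ realises isometric copies of $C$ and $D$ with Hausdorff distance $r$. For ``$\leq$'', given $r > d_{GH}(C, D)$, I would choose a metric space $Z$ with isometric embeddings $C, D \inj Z$ and $\dH_Z(C, D) < r$, and put a metric on the disjoint union $P := C \sqcup D$ equal to $d_C, d_D$ on each component and to $d_P(c, d) := \max\{r, d_Z(c, d)\}$ across components. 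A case analysis depending on where the three vertices of a triangle lie verifies the triangle inequality; since $d_P(c, d) \geq r > 0$ the space $P$ is compact as the topological disjoint union of $C$ and $D$; and by construction $\dH_P(C, D) = r$, so the natural projection $P \to 2_r$ is a proper \Hfib with the prescribed fibers.

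To conclude, I invoke \cref{prop-good-moduli-structure}~\ref{prop-good-moduli-structure-dist}, which expresses $d_{\metGH}([C], [D])$ as the infimum of $\sum_{i=0}^N r_i$ over compatible chains of morphisms $f_i\colon 2_{r_i} \to \stGH$, equivalently, over sequences $C = C_0, C_1, \ldots, C_{N+1} = D$ of nonempty compact metric spaces together with, for each consecutive pair, a proper \Hfib witnessing the gap. Applying the auxiliary claim to minimise each $r_i$ independently gives $d_{\metGH}([C], [D]) = \inf_{N, (C_i)}\sum_{i=0}^N d_{GH}(C_i, C_{i+1})$, which collapses to $d_{GH}(C, D)$ by the triangle inequality for the Gromov--Hausdorff distance \confer{\cite[{Chapter~7.3}]{BBI}}. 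The main obstacle will be the ``$\leq$'' direction of the auxiliary claim---producing a proper \Hfib out of a near-optimal ambient embedding---for which the truncated cross metric $\max\{r, d_Z(\cdot, \cdot)\}$ is the key device, simultaneously enforcing the $1$-Lipschitz constraint imposed by $2_r$ and keeping the Hausdorff distance under control.
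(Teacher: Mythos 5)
Your proof is correct, and for the upper bound \(d_{\metGH}([C],[D])\leq d_{GH}(C,D)\) it is essentially the paper's argument: both reduce to manufacturing a single proper \Hfib over \(2_r\) whose fibers are (isometric to) \(C\) and \(D\), out of a near-optimal realization of the Gromov--Hausdorff distance. The only cosmetic difference is the gluing metric: the paper starts from a correspondence \(R\) of small distortion and sets the cross-distance to \(\tfrac{1}{2}\mathrm{dis}(R)+\inf_{(x',y')\in R}(d_C(x,x')+d_D(y,y'))\), whereas you start from an ambient embedding \(C,D\inj Z\) with \(\dH_Z(C,D)<r\) and truncate \(d_Z\) from below by \(r\); both devices force every point of one fiber to be at distance exactly \(r\) from the other fiber, which is what makes the projection to \(2_r\) a submetry. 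Where you genuinely diverge is the lower bound. The paper views \(r_1\dfn d_{\metGH}([C],[D])\) as an isometric embedding \(2_{r_1}\to\metGH\) and lifts it through \(\pi_{\stGH}\) using \cref{prop-nm-stack-gd-moduli} (that \(\pi_{\stGH}\) is a \fHfib), obtaining a single proper \Hfib over \(2_{r_1+\ep}\) containing isometric copies of \(C\) and \(D\) at Hausdorff distance at most \(r_1+\ep\). You instead work directly with the chain description of the quotient metric from \cref{prop-good-moduli-structure}~\ref{prop-good-moduli-structure-dist}, bound each link \(r_i\geq d_{GH}(C_i,C_{i+1})\) by the easy half of your auxiliary claim, and close the estimate with the triangle inequality for \(d_{GH}\). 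Both routes are valid; the paper's avoids invoking the \(d_{GH}\) triangle inequality (itself a nontrivial gluing fact) at the price of using the formal-submetry property of the good moduli map, while yours needs only the explicit colimit description of \(d_{\metGH}\) together with standard facts from \cite{BBI}.
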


\begin{proof}
  \Cref{prop-metGH-is-usual-GH-cpt-isomet} follows immediately from \cref{prop-good-moduli-structure} \ref{prop-good-moduli-structure-uset} and \cref{def-stGH} \ref{def-stGH-stGH}.
  
  Next, we prove \cref{prop-metGH-is-usual-GH-dist-GH}. 
  Write \(d_0\) for the Gromov--Hausdorff distance (in the sense of \cite[{Definition~7.3.10}]{BBI}).
  Let \(X, Y\) be compact metric spaces and \(\ep > 0\) a positive real number.
  By \cite[Theorem~7.3.25]{BBI}, there exists a correspondence \(R\subset X\times Y\) \confer{\cite[Definition~7.3.17]{BBI}} such that \(\mathrm{dis}(R) \leq d_0(X, Y) + \ep\), where \(\mathrm{dis}(R)\) is the distorsion of \(R\) \confer{\cite[Definition 7.3.21]{BBI}}.
  Write \(r_0\dfn \frac{1}{2}\cdot \mathrm{dis}(R)\); 
  \(d_Z\) for the distance on the set \(Z\dfn X\sqcup Y\) determined by the condition that \(d_Z|_X = d_X\), \(d_Z|_Y = d_Y\), and for any \(x\in X\), any \(y\in Y\), 
  \[d_Z(x,y)=d_Z(y,x) \dfn r_0 + \inf_{(x',y')\in R}(d_X(x,x') + d_Y(y,y')).\]
  Note that by the proof of \cite[Theorem~7.3.25]{BBI}, \(d_Z\) is a distance on \(Z\).
  For any \(x\in X\) and any \(y\in Y\), it holds that \(d_Z(x,Y) = d_Z(X, y) = r_0\).
  Hence, the natural map \(p:Z\to 2_{r_0}\) determined by \(p(X) = \{0\}\) and \(p(Y) = \{r_0\}\) is a proper \Hfib.
  In particular, there exists a morphism \(2_{r_0}\to \stGH\) such that \([X] = [\pi_{\stGH}(p^{-1}(0))]\), and \([Y] = [\pi_{\stGH}^{-1}(r_0)]\).
  This implies that \(d_{\metGH}([X], [Y]) \leq r_0 \leq d_0(X, Y) + \ep\).
  By letting \(\ep \to 0\), we conclude that \(d_{\metGH}([X], [Y]) \leq d_0(X, Y)\). 

  Next, we prove that \(d_{\metGH}([X], [Y]) \geq d_0(X, Y)\).
  Write \(r_1\dfn d_{\metGH}([X], [Y])\) and \(f:2_{r_1}\to \metGH\) for the morphism determined by \(f(0) \dfn [X]\) and \(f(r_1) \dfn [Y]\). 
  By \cref{prop-nm-stack-gd-moduli}, there exists a morphism of stacks in groupoids \(\tilde{f}:2_{r_1 + \ep}\to \stGH\) over \(\tbMet_{\lhftop}\) such that \(\pi_{\stGH}\circ \tilde{f} = f\circ \iota_{r_1+\ep, r_1}\).
  By the 2-Yoneda lemma, there exists a proper \Hfib \(q:Q\to 2_{r_1+\ep}\) such that \([q^{-1}(0)] = [X]\), and \([q^{-1}(r_1+\ep)] = [Y]\). 
  This implies that \(d_0(X, Y)\leq \dH_Q(q^{-1}(0), q^{-1}(r_1 + \ep)) = r_1 + \ep\). 
  By letting \(\ep \to 0\), we conclude that \(d_0(X, Y)\leq r_1 = d_{\metGH}([X], [Y])\). 
  This completes the proof of \cref{prop-metGH-is-usual-GH}.
\end{proof}

\begin{corollary}\label{prop-stGH-sep}
  \(\stGH\) is separated. 
  In particular, \(\pi_{\stGH}:\stGH\to \metGH\) is the coarse moduli space.
\end{corollary}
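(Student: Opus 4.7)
The plan is to deduce separatedness from the characterization of separated naive metric stacks established in \cref{prop-sep-stack-char} by verifying that the good moduli space \(\metGH\) is a genuine metric space, not merely a pseudometric one. Since \cref{thm: stGH is a naive metric stack} already provides a \fHfib \(\pi_{\Urys}:\Cpt(\Urys)\to \stGH\) from a (complete) metric space \confer{\cref{prop-hausdorff-dist-cplt}}, the stack \(\stGH\) is a naive metric stack, so the equivalences of \cref{prop-sep-stack-char} are available. It therefore suffices to establish assertion \ref{prop-sep-stack-char-gd-moduli-in-met}, namely \(\metGH\in\Met\).

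To verify \(\metGH\in\Met\), I would invoke \cref{prop-metGH-is-usual-GH}: its first assertion identifies the underlying set of \(\metGH\) with the set of isometry classes of compact metric spaces (via the bijection in \cref{prop-good-moduli-structure} \ref{prop-good-moduli-structure-uset}), while its second assertion identifies \(d_{\metGH}\) with the classical Gromov--Hausdorff distance in the sense of \cite[Definition~7.3.10]{BBI}. Then I would appeal to the classical fact (cf.~\cite[Theorem~7.3.30]{BBI}) that two compact metric spaces are at Gromov--Hausdorff distance zero if and only if they are isometric. Combined with the identification of points of \(\metGH\) with isometry classes, this forces \(d_{\metGH}(p,q)=0\Rightarrow p=q\), so \(\metGH\) is a metric space.

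Applying the implication ``\ref{prop-sep-stack-char-gd-moduli-in-met}\(\Rightarrow\)\ref{prop-sep-stack-char-sep}'' of \cref{prop-sep-stack-char} then yields that \(\stGH\) is separated. Finally, the second statement of the corollary follows at once from \cref{prop-good-is-coarse}, which upgrades the good moduli space of a separated naive metric stack to a coarse moduli space.

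There is essentially no obstacle in this corollary itself: all of the real work has already been carried out in \cref{thm: stGH is a naive metric stack}, \cref{prop-metGH-is-usual-GH}, and \cref{prop-sep-stack-char}, so the argument reduces to a short and mechanical combination of these results. The only subtle point is the appeal to the classical fact that Gromov--Hausdorff distance zero implies isometry for compact spaces, but this is cited directly from \cite{BBI} and requires no new input.
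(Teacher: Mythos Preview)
Your proposal is correct and follows essentially the same approach as the paper: both invoke \cref{prop-metGH-is-usual-GH} to identify \(d_{\metGH}\) with the classical Gromov--Hausdorff distance, appeal to \cite[Theorem~7.3.30]{BBI} to conclude that \(\metGH\) is a genuine metric space, and then apply \cref{prop-sep-stack-char}. Your write-up is slightly more explicit in citing \cref{thm: stGH is a naive metric stack} and \cref{prop-good-is-coarse}, but the content is the same.
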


\begin{proof}
  \Cref{prop-stGH-sep} follows immediately from the fact that the Gromov--Hausdorff distance is actually a distance (cf. \cite[Theorem 7.3.30]{BBI}), together with \cref{prop-sep-stack-char} and \cref{prop-metGH-is-usual-GH}. 
\end{proof}

\begin{corollary}\label{prop-metGH-is-cplt}
  \(\metGH\) is complete.
\end{corollary}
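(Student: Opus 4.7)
The plan is to exhibit \(\metGH\) as the base of an honest submetry from a complete metric space and then invoke \cref{prop-Hfib-cplt-descending}. The key players are already present: by \cref{prop-hausdorff-dist-cplt} the space \(\Cpt(\Urys)\) is complete because \(\Urys\) is complete, and by \cref{thm: stGH is a naive metric stack} the Urysohn chart \(\pi_{\Urys}:\Cpt(\Urys)\to \stGH\) is a formal submetry. So the natural candidate is the composite \(\pi_{\stGH}\circ \pi_{\Urys}:\Cpt(\Urys)\to \metGH\), and the task reduces to showing that this composite is a submetry of metric spaces.

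First I would observe that by \cref{prop-nm-stack-gd-moduli}, combined with \cref{thm: stGH is a naive metric stack}, the good moduli morphism \(\pi_{\stGH}:\stGH\to \metGH\) is itself a formal submetry. Then by \cref{prop-Hfib-for-stacks} \ref{prop-Hfib-for-stacks-comp}, the composite \(\pi_{\stGH}\circ \pi_{\Urys}\) is a formal submetry of stacks over \(\tbMet_{\lhftop}\). Since \(\Cpt(\Urys)\in\Met\) and, by \cref{prop-stGH-sep} together with \cref{prop-metGH-is-usual-GH}, the target \(\metGH\) also lies in \(\Met\), the characterization \cref{prop-Hfib-for-stacks} \ref{prop-Hfib-for-stacks-met} promotes this to a genuine submetry \(\Cpt(\Urys)\to \metGH\) in \(\Met\).

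Finally, applying \cref{prop-Hfib-cplt-descending} to this submetry yields completeness of \(\metGH\), since \(\Cpt(\Urys)\) is complete by \cref{prop-hausdorff-dist-cplt}. I do not anticipate a serious obstacle, as every ingredient is already in place; the only step requiring a moment of care is the verification that \(\metGH\) is a metric (not merely pseudometric) space, so that \cref{prop-Hfib-for-stacks} \ref{prop-Hfib-for-stacks-met} applies, but this is precisely \cref{prop-stGH-sep}. The argument is therefore essentially a three-line chase through the results of the preceding sections.
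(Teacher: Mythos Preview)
Your proposal is correct and follows essentially the same route as the paper: compose the Urysohn chart with the good moduli morphism, use \cref{prop-Hfib-for-stacks}~\ref{prop-Hfib-for-stacks-comp} to get a \Hfib\ \(\Cpt(\Urys)\to\metGH\), and conclude via \cref{prop-hausdorff-dist-cplt} and \cref{prop-Hfib-cplt-descending}. You are slightly more explicit than the paper in invoking \cref{prop-Hfib-for-stacks}~\ref{prop-Hfib-for-stacks-met} and \cref{prop-stGH-sep} to justify that the composite is an honest \Hfib\ between metric spaces, but this is a welcome clarification rather than a different argument (note, incidentally, that \ref{prop-Hfib-for-stacks-met} is stated for \(\epMet\), so the metricity of \(\metGH\) is strictly needed only for \cref{prop-Hfib-cplt-descending}).
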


\begin{proof}
  Since \(\stGH\) is a naive metric stack, it follows from \cref{prop-nm-stack-gd-moduli} that the natural morphism \(\pi_{\stGH}:\stGH\to \metGH\) to the good moduli space is a \fHfib.
  Hence, by \cref{prop-Hfib-for-stacks} \ref{prop-Hfib-for-stacks-comp}, the composite \(\pi_{\stGH}\circ\pi_{\Urys}:\Cpt(\Urys)\to \metGH\) is a \Hfib.
  Since the Urysohn universal space \(\Urys\) is complete, it follows from \cref{prop-hausdorff-dist-cplt} that \(\Cpt(\Urys)\) is complete. 
  Hence, by \cref{prop-Hfib-cplt-descending}, \(\metGH\) is complete. 
  This completes the proof of \cref{prop-metGH-is-cplt}. 
\end{proof}

Next, we verify that \(\stGH\) is a naive metric stack of quotient type, thereby giving a partial answer to what Gromov suggested in \cite[{3.11 {\small\(\frac{3}{4}_+\)}}]{Gromov}.

\begin{lemma}\label{prop-CptUry-lift}
  Let \(T\) be a totally bounded metric space and \(p:T\to \stGH\) a morphism of stacks in groupoids over \(\tbMet_{\lhftop}\).
  Then, there exists a morphism \(q:T\to \Cpt(\Urys)\) and a 2-morphism \(\alpha:\pi_{\Urys}\circ q\tosim p\) \confer{\cref{def-Urys-chart}}. 
\end{lemma}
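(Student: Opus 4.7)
The plan is to translate \(p\) via the 2-Yoneda lemma into a proper \Hfib, embed it isometrically into \(\Urys\times T\) over \(T\), and then invoke the representability of \(\inCpt(\Urys/*)\) by \(\Cpt(\Urys)\) \confer{\cref{prop: cpt repble}} to produce \(q\).

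First, I would apply the 2-Yoneda lemma together with \cref{def-stGH} \ref{def-stGH-stGH} to identify \(p\) with an object of \(\stGH(T)\), namely a proper \Hfib \(\tilde p:P\to T\). Since \(T\in\tbMet\) and the fibers of \(\tilde p\) are compact (hence totally bounded), \cref{prop-Hfib-tot-bd} shows that \(P\) is totally bounded, and therefore its completion \(\hat P\) is compact. Applying \cref{rmk-prop-Ury-univ} to the (vacuous) closed isometric embedding \(\emptyset\inj \hat P\) yields an isometric embedding \(\hat \jmath:\hat P\inj \Urys\), whose restriction to \(P\subset\hat P\) gives an isometric embedding \(j:P\inj \Urys\).

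Because the product in \(\epMet\) carries the max distance \confer{\cref{lem: lim and colim in PEMet} \ref{lem: lim and colim in PEMet ass: lim}} and the submetry condition gives \(d_P(x,x')\geq d_T(\tilde p(x),\tilde p(x'))\), the pair \(\iota\dfn (j,\tilde p):P\to \Urys\times T\) is an isometric embedding whose second coordinate equals \(\tilde p\). Thus \(A\dfn \iota(P)\subset \Urys\times T\), equipped with the induced metric and the natural projection to \(T\), defines an element of \(\inCpt(\Urys/*)(T)\). By \cref{prop: cpt repble}, this element corresponds via the Yoneda lemma to a morphism \(q:T\to \Cpt(\Urys)\), and by the construction of \(\pi_{\Urys}\) in the proof of \cref{thm: stGH is a naive metric stack} the composite \(\pi_{\Urys}\circ q\) is identified, via 2-Yoneda, with the proper \Hfib \(A\to T\). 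The isometry \(\iota:P\tosim A\) over \(T\) then furnishes the desired 2-morphism \(\alpha:\pi_{\Urys}\circ q\tosim p\) in \(\stGH(T)\). The argument is essentially formal; the only substantive point is extending the Urysohn embedding from compact spaces to the totally bounded space \(P\), which is handled by passing through the compact completion \(\hat P\) as above.
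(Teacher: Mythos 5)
Your proposal is correct and follows essentially the same route as the paper's proof: pass to the proper \Hfib \(\tilde p:P\to T\) via 2-Yoneda, use \cref{prop-Hfib-tot-bd} and \cref{rmk-prop-Ury-univ} to embed \(P\) isometrically into \(\Urys\), form the graph embedding into \(\Urys\times T\), and invoke \cref{prop: cpt repble} to obtain \(q\). Your added justification that \((j,\tilde p)\) is an isometric embedding (via the max-distance on the product and the 1-Lipschitz property of \(\tilde p\)) is a detail the paper leaves implicit, but there is no substantive difference.
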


\begin{proof}
  Write \(f:P\to T\) for the proper \Hfib corresponding, via the 2-Yoneda lemma, to the morphism \(p:T\to \stGH\).
  Then, by \cref{prop-Hfib-tot-bd}, \(P\) is totally bounded.
  Hence, by \cref{rmk-prop-Ury-univ}, there exists an isometric embedding \(i: P\subset \hat{P}\inj \Urys\).
  Write \(\tilde{f}\dfn (p,i): P\to T\times \Urys\).
  Then, the subset \(\im(\tilde{f})\subset T\times \Urys\) is an element of \(\inCpt(\Urys/*)(T)\).
  Write \(q:T\to \Cpt(\Urys)\) for the morphism corresponding, via the Yoneda lemma, to the subset \(\im(\tilde{f})\subset T\times \Urys\).
  Then, the isometry \(P\tosim \im(\tilde{f})\) induces a 2-morphism \(\pi\circ q\tosim p\).
  This completes the proof of \cref{prop-CptUry-lift}.
\end{proof}

\begin{corollary}\label{prop-stGH-quot}
  Write \(\mcR\dfn \Cpt(\Urys)\times_{\pi,\stGH,\pi}\Cpt(\Urys)\in \Sh_{\lhftop}(\tbMet)\).
  Then, the natural morphism \([\Cpt(\Urys)/\mcR]\to \stGH\) \confer{\cref{def-quot-stack}} is an equivalence.
  In particular, \(\stGH\) is a naive metric stack of quotient type.
\end{corollary}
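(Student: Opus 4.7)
The plan is to apply the equivalence criterion of \cref{prop-quot-stack-presentation} \ref{prop-quot-stack-presentation-eq} to the Urysohn chart $\pi_{\Urys}:\Cpt(\Urys)\to \stGH$ of \cref{def-Urys-chart}. That criterion reduces the claim that $[\Cpt(\Urys)/\mcR]\to \stGH$ is an equivalence to the statement that every morphism $p:T\to \stGH$ from a totally bounded metric space $T$ admits, locally on some lsm covering of $T$, a lift along $\pi_{\Urys}$ together with a compatible 2-isomorphism.

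The key observation is that this lifting property has essentially already been established. Indeed, \cref{prop-CptUry-lift} produces, for any such $p:T\to \stGH$, a single morphism $q:T\to \Cpt(\Urys)$ and a 2-isomorphism $\pi_{\Urys}\circ q\tosim p$. Taking the trivial lsm covering $\{\id_T:T\to T\}$---which is a valid lsm covering by \cref{lem: top on Met} \ref{lem: top on Met ass: lhf cov from H-fib} applied to the identity isomorphism---together with the lift $q$, the hypotheses of \cref{prop-quot-stack-presentation} \ref{prop-quot-stack-presentation-eq} are satisfied, and the equivalence follows.

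For the ``in particular'' statement, I would verify the definition \cref{def-of-quot-type} directly with the ambient metric space $\Cpt(\Urys)$ and the \fHfib $\pi_{\Urys}$: by \cref{thm: stGH is a naive metric stack}, $\pi_{\Urys}$ is a \fHfib from a metric space, and the remaining local essential-surjectivity condition on objects $t\in\stGH(T)$ is, once again, immediate from \cref{prop-CptUry-lift} (applied to the classifying morphism $T\to \stGH$ of $t$) with the trivial lsm covering. Since the conceptual content is concentrated in \cref{prop-CptUry-lift}---whose proof in turn relies on the universal embedding property of $\Urys$ \confer{\cref{rmk-prop-Ury-univ}} to realize an arbitrary proper \Hfib $P\to T$ with $T$ totally bounded as a closed subspace of $\Urys\times T$---there is no further technical obstacle here, and the entire argument reduces to an unwinding of the relevant definitions.
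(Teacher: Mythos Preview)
Your proof is correct and follows the same approach as the paper, which simply cites \cref{prop-quot-stack-presentation}~\ref{prop-quot-stack-presentation-eq} and \cref{prop-CptUry-lift}; you have merely spelled out the details (the trivial lsm covering, and the verification of \cref{def-of-quot-type} via \cref{thm: stGH is a naive metric stack}) that the paper leaves implicit.
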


\begin{proof}
  \Cref{prop-stGH-quot} follows immediately from \cref{prop-quot-stack-presentation} \ref{prop-quot-stack-presentation-eq} and \cref{prop-CptUry-lift}.
\end{proof}

Next, we verify that for each \(n\in\N\), \(\stGH_n\) is a naive metric stack of quotient type.
To this end, we introduce a class of morphisms of stacks that are represented by proper submetries.

\begin{definition}\label{def-rep-prop-Hfib}
  Let \(f:\mcX\to \mcB\) be a morphism of stacks in groupoids over \(\tbMet_{\lhftop}\). 
  We say that \(f\) is a \textbf{representable proper \Hfib} if for any object \(T\in\tbMet_{\lhftop}\) and any morphism \(b:T\to\mcB\), there exist a proper \Hfib \(f_T:X_T\to T\) in \(\Met\) and an equivalence \(X_T\cong \mcX\times_{\mcB}T\) in \(\Stack_{\lhftop}(\tbMet)_{/T}\). 
\end{definition}

\begin{lemma}\label{prop-rep-proper-Hfib}
  The following assertions hold:
  \begin{assertion}
    \item \label{prop-rep-proper-Hfib-met}
    For any proper \Hfib \(f:X\to Y\) in \(\Met\), \(f\) is a representable proper \Hfib as a morphism of stacks in groupoids over \(\tbMet_{\lhftop}\). 
    \item \label{prop-rep-proper-Hfib-comp}
    Let \(f:\mcX\to \mcY\) and \(g:\mcY\to \mcZ\) be representable proper \Hfib of stacks in groupoids over \(\tbMet_{\lhftop}\). 
    Then, \(g\circ f\) is a representable proper \Hfib. 
    \item \label{prop-rep-proper-Hfib-bc}
    Let \(f:\mcX\to \mcY\) and \(g:\mcY'\to\mcY\) be morphisms of stacks in groupoids over \(\tbMet_{\lhftop}\). 
    Assume that \(f\) is a representable proper \Hfib.
    Then, the natural morphism \(f':\mcX'\dfn \mcX\times_{\mcY}\mcY'\to \mcY'\) is a representable proper \Hfib.
    \item \label{prop-rep-proper-Hfib-fhfib}
    A representable proper \Hfib \(f:\mcX\to \mcB\) is a \fHfib.
  \end{assertion}
\end{lemma}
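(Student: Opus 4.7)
The plan is to establish the four assertions in order, with \ref{prop-rep-proper-Hfib-met}, \ref{prop-rep-proper-Hfib-comp}, and \ref{prop-rep-proper-Hfib-bc} being essentially formal consequences of the stability properties of proper \Hfibs developed in Section~\ref{section: Hfib}, while \ref{prop-rep-proper-Hfib-fhfib} requires a genuine use of properness to upgrade an approximate lift to an exact one.

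For \ref{prop-rep-proper-Hfib-met}, given a proper \Hfib $f:X\to Y$ in $\Met$ and a morphism $b:T\to Y$ with $T\in\tbMet$, I take the ordinary fiber product $X\times_YT$ in $\Met$: the projection $X\times_YT\to T$ is a \Hfib by \cref{prop-Hfib-basic}~\ref{prop-Hfib-basic-bc} and has fibers canonically identified with fibers of $f$, hence is proper by \cref{prop: Hfib fiberwise cpt prop}; the 2-categorical fiber product coincides with this object because $X,Y,T$ are all sheaves (\cref{lem: lhf subcan}~\ref{lem: lhf subcan ass: subcan}). For \ref{prop-rep-proper-Hfib-bc}, a pasting of 2-pullback squares identifies $\mcX'\times_{\mcY'}T$ with $\mcX\times_{\mcY}T$ along the composite $T\to\mcY'\to\mcY$, and the latter is represented by a proper \Hfib by hypothesis. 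For \ref{prop-rep-proper-Hfib-comp}, representability of $g$ applied to $T\to\mcZ$ produces a proper \Hfib $Y_T\to T$ in $\Met$; since $T\in\tbMet$ and $Y_T\to T$ has compact fibers, \cref{prop-Hfib-tot-bd} yields $Y_T\in\tbMet$, so representability of $f$ applies to the induced morphism $Y_T\to\mcY$ and produces a proper \Hfib $X_{Y_T}\to Y_T$. The composite $X_{Y_T}\to Y_T\to T$ is a \Hfib by \cref{prop-Hfib-basic}~\ref{prop-Hfib-basic-comp}; properness descends because the fiber over $t\in T$ is the preimage under the proper map $X_{Y_T}\to Y_T$ of the compact fiber over $t$. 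Pasting the two representability equivalences identifies $X_{Y_T}$ with $\mcX\times_{\mcZ}T$.

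For \ref{prop-rep-proper-Hfib-fhfib}, essential surjectivity of $f_*:\mcX(*)\to\mcB(*)$ is immediate: given $b:*\to\mcB$, the base change is a proper \Hfib $X_b\to *$, which is nonempty by surjectivity of any \Hfib, so $b$ lifts. For the lifting condition of \cref{def-Hfib-for-stacks}, given $r>s>0$, $x:*\to\mcX$, $g:2_s\to\mcB$, and $\alpha:f\circ x\tosim g\circ\iota_{0s}$, I would apply representability to $g\circ\iota_{rs}:2_r\to\mcB$ to obtain a proper \Hfib $p:Q\to 2_r$ in $\Met$ with $Q\simeq \mcX\times_{\mcB}2_r$. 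The pair $(x,\alpha)$ picks out a point $q_0\in p^{-1}(0)$, and the problem of producing the triple $(h,\beta,\gamma)$ reduces, via the universal property of the 2-fiber product, to constructing a $1$-Lipschitz section $\sigma:2_r\to Q$ of $p$ with $\sigma(0)=q_0$, equivalently a point $q_1\in p^{-1}(r)$ with $d_Q(q_0,q_1)\leq r$.

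The main obstacle lies precisely here: \cref{prop-Hfib-char}~\ref{prop-Hfib-char-inf} only gives $\inf_{q\in p^{-1}(r)}d_Q(q_0,q)=r$, i.e., an approximate lift, whereas the formal submetry condition demands an exact one. This is where properness becomes essential: since $p^{-1}(r)$ is compact and $d_Q(q_0,\cdot)$ is continuous, the infimum is attained at some $q_1$ with $d_Q(q_0,q_1)=r$. Defining $h:2_r\to\mcX$ by $h(0)\dfn q_0$, $h(r)\dfn q_1$ and taking the $2$-morphisms $\beta,\gamma$ to be the canonical ones coming from the equivalence $Q\simeq\mcX\times_{\mcB}2_r$ then yields the desired data, with the compatibility $\alpha\circ (f\star\beta)=\gamma\star\iota_{0r}$ built into the choice of $q_0$ as the image of $(x,\alpha)$ in the fiber over $0$.
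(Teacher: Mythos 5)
Your treatment of assertions \ref{prop-rep-proper-Hfib-met}--\ref{prop-rep-proper-Hfib-bc} is correct and fills in what the paper dismisses as a ``straightforward verification''; in particular you rightly isolate the one non-formal point in \ref{prop-rep-proper-Hfib-comp}, namely that \(Y_T\) must be shown to lie in \(\tbMet\) (via \cref{prop-Hfib-tot-bd}) before representability of \(f\) can be applied to \(Y_T\to\mcY\). For assertion \ref{prop-rep-proper-Hfib-fhfib} your argument is also valid, but it rests on a misreading of \cref{def-Hfib-for-stacks} and consequently takes a heavier route than the paper's. The lifting condition there does \emph{not} demand an exact lift: it asks for a lift \(h\) defined on \(2_r\) of \(g\circ\iota_{rs}\) with \(r>s\), so the slack \(\ep=r-s\) is already built into the definition, in exact parallel with condition \ref{prop-Hfib-char-lift} of \cref{prop-Hfib-char}. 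Pulling back along \(g:2_s\to\mcB\) (rather than along \(g\circ\iota_{rs}\)) gives a \Hfib \(p:Q_s\to 2_s\), and \(\inf_{q\in p^{-1}(s)}d_{Q_s}(q_0,q)=s<r\) already produces a point \(q_1\) with \(d_{Q_s}(q_0,q_1)\leq r\), whence the desired \(h:2_r\to\mcX\); no compactness of the fibre, and hence no properness, is used. This is why the paper derives \ref{prop-rep-proper-Hfib-fhfib} ``immediately'' from \cref{prop-Hfib-char} ``\ref{prop-Hfib-char-Hfib}\(\Leftrightarrow\)\ref{prop-Hfib-char-lift}'': every representable \Hfib, proper or not, is a \fHfib. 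Your detour through compactness of \(p^{-1}(r)\) is sound --- and in fact even there the infimum is attained for free, since the distance in \(Q_s\times_{2_s}2_r\) between points of the two fibres is \(\max\{d_{Q_s},r\}\) --- but it misattributes to properness a role it does not play in this lemma.
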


\begin{proof}
  \Cref{prop-rep-proper-Hfib-met,prop-rep-proper-Hfib-comp,prop-rep-proper-Hfib-bc} follow by a straightforward verification once one unpacks the definitions.
  \Cref{prop-rep-proper-Hfib-fhfib} follows immediately from \cref{prop-Hfib-char} ``\ref{prop-Hfib-char-Hfib}\(\Leftrightarrow\)\ref{prop-Hfib-char-lift}'' and \cref{def-Hfib-for-stacks}.
\end{proof}

\begin{lemma}\label{prop-rep-proper-Hfib-in-met}
  Let \(p:\mcX\to B\) be a representable proper \Hfib of stacks in groupoids over \(\tbMet_{\lhftop}\) such that \(B\in\Met\). 
  Then, the natural morphism to the good moduli space \(\pi_{\mcX}:\mcX\to \mu(\mcX)\) is an equivalence.
  In other words, \(\mcX\) is representable by a metric space.
\end{lemma}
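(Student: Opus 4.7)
The plan is to reduce to the formal categorical fact that, for an adjunction with fully faithful right adjoint, the unit at every object in the essential image of the right adjoint is an isomorphism. The relevant adjunction is between $\mu \circ \pi_0: \Stack_{\lhftop}(\tbMet) \to \epMet$ and the inclusion $\iota: \epMet \hookrightarrow \Stack_{\lhftop}(\tbMet)$ established in \cref{prop-epmet-into-stacks-has-left-adj} \ref{prop-epmet-into-stacks-has-left-adj-ep}; by \cref{def-good-moduli}, the morphism $\pi_{\mcX}$ is, by definition, the unit of this adjunction at $\mcX$.

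The first step is to use the representability hypothesis to produce a metric space equivalent to $\mcX$. I would apply \cref{def-rep-prop-Hfib} to the identity morphism $\id_B: B \to B$. Since $\id_B$ is terminal in $\Stack_{\lhftop}(\tbMet)_{/B}$, the canonical projection $\mcX \times_B B \to \mcX$ is an equivalence; representability then furnishes a proper \Hfib $f: X \to B$ in $\Met$ together with an equivalence $X \simeq \mcX$ in $\Stack_{\lhftop}(\tbMet)$. In particular, $\mcX$ lies in the essential image of $\iota$.

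The second step is to record that $\iota$ is fully faithful. Combining the fully faithfulness of $h: \epMet \hookrightarrow \PSh(\tbMet)$ from \cref{lem: univ adj PEMet} \ref{lem: univ adj PEMet ass: ff} with the subcanonicity of the lsm topology from \cref{lem: lhf subcan} \ref{lem: lhf subcan ass: subcan}, the embedding $h$ factors through $\Sh_{\lhftop}(\tbMet)$ as a fully faithful functor, and composing with the fully faithful inclusion of sheaves into stacks in groupoids yields the desired fully faithful $\iota$. Standard adjunction formalism then gives that the counit of $(\mu \circ \pi_0) \dashv \iota$ is a natural isomorphism, and the triangle identities force the unit at $\iota(Y)$ to be an isomorphism for every $Y \in \epMet$.

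Finally, transporting $\pi_{\mcX}$ along the equivalence $\mcX \simeq \iota(X)$ from the first step identifies it with the unit at $\iota(X)$, which is an isomorphism by the preceding paragraph. Hence $\pi_{\mcX}$ is an equivalence of stacks in groupoids over $\tbMet_{\lhftop}$. I do not anticipate a genuine obstacle here: the argument is purely formal once representability has been unpacked at $\id_B$. The only subtle point deserving attention is that the representing object lies in $\Met$, not merely in $\epMet$, which is guaranteed directly by \cref{def-rep-prop-Hfib}.
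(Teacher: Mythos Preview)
Your argument has a genuine gap in the very first step. The definition of a representable proper \Hfib\ (\cref{def-rep-prop-Hfib}) quantifies only over test objects \(T\in\tbMet\), i.e.\ over \emph{totally bounded} metric spaces. The hypothesis of the lemma, however, is merely that \(B\in\Met\); nothing guarantees that \(B\) is totally bounded. Consequently you are not entitled to plug in \(\id_B:B\to B\) and extract a proper \Hfib\ \(X\to B\) with \(X\simeq\mcX\). This is not a cosmetic issue: the two applications of the lemma in the paper (\cref{prop-naive-metric-of-quot-and-rep-proper-Hfib} and \cref{prop-fine-moduli-GH}) both invoke it for a chart \(U\in\Met\) coming from \cref{def-naive-metric-stack}, where total boundedness is not available.

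The paper's proof is engineered precisely to avoid this problem. It reduces to showing that \(\mcX\times_BT\to\mu(\mcX)\times_BT\) is an isomorphism for each \(T\in\tbMet\); only then does it use representability to identify \(\mcX\times_BT\) with an object of \(\tbMet\). The nontrivial content is the inequality \(d_{\mcX\times_BT}(\tilde x,\tilde x')\le d_{\mu(\mcX)\times_BT}(x,x')\), which requires unpacking the description of distances in \(\mu(\mcX)\) from \cref{prop-good-moduli-structure}~\ref{prop-good-moduli-structure-dist} and concatenating the resulting chain of two-point maps inside a single totally bounded test object. Your formal adjunction argument would indeed give a two-line proof if \(B\) were totally bounded, but as stated the lemma needs the local approach.
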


\begin{proof}
  To prove \cref{prop-rep-proper-Hfib-in-met}, it suffices to prove that for any object \(T\in\tbMet\) and any morphism \(b:T\to B\), the natural morphism \(\mcX\times_BT\to \mu(\mcX)\times_BT\) of stacks in groupoids over \(\tbMet_{\lhftop}\) is an equivalence.
  Since \(p:\mcX\to B\) is a representable proper \Hfib, it follows from \cref{prop-rep-proper-Hfib} \ref{prop-rep-proper-Hfib-bc} that we may assume without loss of generality that \(\mcX\times_BT\in\tbMet\). 
  Moreover, by \cref{prop-good-moduli-structure} \ref{prop-good-moduli-structure-uset}, the natural morphism \(\mcX\times_BT\to \mu(\mcX)\times_BT\) is bijective.
  Let \(x, x'\in \mu(\mcX)\times_BT\) be points and \(\ep > 0\) a positive real number.
  Write \(q:\mu(\mcX)\times_BT\to \mu(\mcX)\) for the natural projection.
  By \cref{prop-good-moduli-structure} \ref{prop-good-moduli-structure-dist}, there exist an integer \(N\in \N\), elements \(r_i\in \bRp\) (\(i\in\{0,\cdots,N\}\)), and morphisms \(f_i:2_{r_i}\to \mcX\) (\(i\in\{0,\cdots,N\}\)) such that the following conditions hold:
  \begin{condition}
    \item \label{prop-rep-proper-Hfib-in-met-1}
    \(\sum_{i=0}^N r_i \leq d_{\mu(\mcX)}(q(x),q(x')) + \ep\). 
    \item \label{prop-rep-proper-Hfib-in-met-2}
    It holds that \(\pi_{\mcX}(\iota_0^*f_0) = q(x)\), and \(\pi_{\mcX}(\iota_{r_N}^*f_N) = q(x')\), where we regard \(f_0\) (resp. \(f_N\)) as an object of \(\mcX(2_{r_0})\) (resp. \(\mcX(2_{r_N})\)) via the 2-Yoneda lemma \confer{\cite[\href{https://stacks.math.columbia.edu/tag/0GWI}{Tag 0GWI}]{stacks-project}}.
    \item \label{prop-rep-proper-Hfib-in-met-3}
    For any \(i\in\{1,\cdots, N\}\), it holds that \(\iota_{r_{i-1}}^*f_{i-1} \cong \iota_0^*f_i\) in \(\mcX(*)\). 
  \end{condition}
  Write \(T_0\dfn \{0\}\cup \left\{\sum_{i=0}^n r_i\,\middle|\,0\leq n\leq N\right\}\subset\R\). 
  Then, \(T_0\in\tbMet\).
  Moreover, by \cref{prop-rep-proper-Hfib-in-met-2,prop-rep-proper-Hfib-in-met-3}, the family of morphisms \(\{\pi_{\mcX}\circ f_i:2_{r_i}\to \mu(\mcX)\}_{i=0}^N\) defines a morphism \(T_0\to \mu(\mcX)\). 
  Write \(s:T_0\to \mu(\mcX)\times_BT_0\) for the section of the natural projection \(p_0:\mu(\mcX)\times_BT_0\to T_0\) determined by this morphism \(T_0\to \mu(\mcX)\). 
  For each \(k\in \{0,\cdots, N\}\), the morphism \(f_k:2_{r_k}\to \mcX\) defines a morphism \(\tilde{f}_k:2_{r_k}\to \mcX\times_BT_0\) such that \((p_0\circ \tilde{f}_k)(0) = \sum_{i=0}^{k-1}r_i\) and \((p_0\circ \tilde{f}_k)(r_k) = \sum_{i=0}^kr_i\) (where we regard \(\sum_{i=0}^{-1}r_i = 0\)).
  Since \(T_0\in\tbMet\), it follows from \cref{prop-rep-proper-Hfib} \ref{prop-rep-proper-Hfib-bc} that \(\mcX\times_BT_0\) is equivalent to an object of \(\tbMet\). 
  Thus, if we write \(R\dfn \sum_{i=0}^N r_i\), then the family of morphisms \(\left\{\tilde{f}_k:2_{r_k}\to \mcX\times_BT_0\right\}_{k=0}^N\) defines a morphism \(\tilde{f}:2_R\to \mcX\times_BT_0\) such that \((p_0\circ\tilde{f})(0) = 0\) and \((p_0\circ\tilde{f})(R)=R\).
  Write \(\tilde{x}, \tilde{x}'\in \mcX\times_BT\) for the points such that \((\pi_{\mcX}\times_B\id_{T_0})(\tilde{x}) = s(0)\) and \((\pi_{\mcX}\times_B\id_{T_0})(\tilde{x}') = s(R)\). 
  Then, it holds that \(\tilde{f}(0)=\tilde{x}\) and \(\tilde{f}(R) = \tilde{x}'\).
  In particular, by \cref{prop-rep-proper-Hfib-in-met-1}, \(d_{\mcX\times_BT}(\tilde{x}, \tilde{x}') \leq d_{\mu(X)\times_BT}(x, x') + \ep\). 
  By letting \(\ep\to 0\), we conclude that \(\pi_{\mcX}\times_B\id_T:\mcX\times_BT\to \mu(\mcX)\times_BT\) is an isomorphism.
  This completes the proof of \cref{prop-rep-proper-Hfib-in-met}.
\end{proof}

\begin{lemma}\label{prop-naive-metric-of-quot-and-rep-proper-Hfib}
  Let \(f:\mcX\to \mcB\) be a representable proper \Hfib.
  Assume that \(\mcB\) is a naive metric stack of quotient type. 
  Then, \(\mcX\) is also a naive metric stack of quotient type.
\end{lemma}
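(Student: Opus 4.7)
The plan is to pull back the chart for $\mcB$ along $f$ and check that the result is a metric space serving as a chart for $\mcX$.

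First, since $\mcB$ is of quotient type, fix a \fHfib $p:U\to \mcB$ with $U\in\Met$ witnessing this, and form the 2-fiber product $V\dfn \mcX\times_\mcB U$. By \cref{prop-rep-proper-Hfib} \ref{prop-rep-proper-Hfib-bc}, the projection $V\to U$ is a representable proper \Hfib; hence, since $U\in\Met$, \cref{prop-rep-proper-Hfib-in-met} shows that $V$ is representable by a metric space, which I continue to denote by $V\in\Met$. On the other hand, the projection $V\to \mcX$ is the base change of $p$ along $f$, and formal submetries are stable under base change (\cref{prop-Hfib-for-stacks} \ref{prop-Hfib-for-stacks-bc}), so $V\to \mcX$ is itself a \fHfib. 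In particular, \cref{prop-naive-metricity-Hfib-descending} applied to $V\to\mcX$ yields that $\mcX$ is a naive metric stack.

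To verify that $V\to\mcX$ witnesses quotient type, let $T\in\tbMet$ and let $t:T\to\mcX$ be a morphism. Applying the quotient type condition for $\mcB$ to the composite $f\circ t:T\to\mcB$ produces an \covlhf $\{q_i:U_i\to T\}_{i\in I}$ together with morphisms $u_i:U_i\to U$ and 2-morphisms $\alpha_i: p\circ u_i\tosim f\circ(t\circ q_i)$. By the universal property of the 2-fiber product, the triple $(t\circ q_i, u_i, \alpha_i)$ assembles into a morphism $v_i:U_i\to V=\mcX\times_\mcB U$ whose composition $U_i\xto{v_i}V\to \mcX$ is canonically 2-isomorphic to $t\circ q_i = q_i^*t$. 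This is exactly the statement that $q_i^*t$ lies in the essential image of $V(U_i)\to \mcX(U_i)$, completing the verification of quotient type.

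The only nontrivial input is that $V=\mcX\times_\mcB U$ is representable by a genuine metric space rather than merely an extended pseudometric space; this is precisely the content of \cref{prop-rep-proper-Hfib-in-met}, and is where the hypothesis of representability of $f$ (and not just formal submetryness) is used. Everything else follows formally from the base-change and composition properties of (formal) submetries and representable proper submetries recorded earlier, together with the 2-categorical universal property of the fiber product, so no further geometric input is required.
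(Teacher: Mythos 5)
Your proof is correct and follows essentially the same route as the paper: both arguments pull back the chart $U\to\mcB$ to $V=\mcX\times_{\mcB}U$, invoke \cref{prop-rep-proper-Hfib} \ref{prop-rep-proper-Hfib-bc} and \cref{prop-rep-proper-Hfib-in-met} to see that $V$ is a metric space, use base-change stability of formal submetries for $V\to\mcX$, and then lift the lsm coverings witnessing quotient type for $\mcB$ through the 2-fiber product. No gaps.
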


\begin{proof}
  Since \(\mcB\) is a naive metric stack of quotient type, there exists a \fHfib \(p:U\to \mcB\) from a metric space \(U\in\Met\) such that the morphism \([U/\mcR]\to \mcB\), where \(\mcR\dfn U\times_{\mcB}U\), induced by \(p\) is an equivalence. 
  Since \(f\) is a representable proper \Hfib, it follows from \cref{prop-rep-proper-Hfib-in-met} that there exist a proper \Hfib \(g:P\to U\), a morphism \(q:P\to \mcX\), and a 2-morphism \(\alpha:p\circ g\tosim f\circ q\) such that the morphism \(P\to\mcX\times_{\mcB}U\) induced by the triple \((g,q,\alpha)\) is an equivalence.
  Then, by \cref{prop-Hfib-for-stacks} \ref{prop-Hfib-for-stacks-bc}, \(q\) is a \fHfib. 
  In particular, \(\mcX\) is a naive metric stack.

  To prove that \(\mcX\) is of quotient type, let \(T\in\tbMet\) be an object and \(t:T\to \mcX\) a morphism of stacks in groupoids over \(\tbMet_{\lhftop}\). 
  Write \([t]\in\mcX(T)\) (resp. \([f\circ t]\in \mcB(T)\)) for the object corresponding, via the 2-Yoneda lemma, to the morphism \(t:T\to\mcX\) (resp. \(f\circ t:T\to \mcB\)).
  Since \(\mcB\) is of quotient type, it follows from \cref{prop-quot-stack-presentation} \ref{prop-quot-stack-presentation-eq} that there exist an \covlhf \(\left\{r_i:U_i\to T\right\}_{i\in I}\), a family of morphisms \(\{u_i:U_i\to U\}_{i\in I}\), and a family of 2-morphisms \(\{\alpha_i:f\circ t\circ r_i\tosim p\circ u_i\}_{i\in I}\). 
  Then, since the morphism \(P\to\mcX\times_{\mcB}U\) is an equivalence, for each index \(i\in I\), the triple \((t\circ r_i, u_i, \alpha_i)\) induces a morphism \(v_i:U_i\to P\) and a 2-morphism \(q\circ v_i \tosim t\circ r_i\) such that \(g\circ v_i = u_i\). 
  Thus, by \cref{prop-quot-stack-presentation} \ref{prop-quot-stack-presentation-eq}, \(\mcX\) is of quotient type.
  This completes the proof of \cref{prop-naive-metric-of-quot-and-rep-proper-Hfib}.
\end{proof}

\begin{lemma}\label{lem: univ family}
  Let \(n\in \N\) be an integer, 
  \(X\) a totally bounded metric space,
  \(p:P\to X\) a representable proper \Hfib, and 
  \((s_i:X\to P)_{i=1}^n\) a family of sections of \(p\).
  Then, there exist morphisms \([P]:X\to \stGH_n\) and \([\Delta]:P\to \stGH_{n+1}\) of stacks in groupoids over \(\tbMet_{\lhftop}\) such that the 
  following diagram is 2-Cartesian:
  \begin{equation}
    \label{lem: univ family eq: diag}
    \begin{tikzcd}
      {X} \ar[r, "{s_1,\cdots, s_n}"] \ar[d, "{[P]}"'] &[1.5cm]
      {P} \ar[r, "{p}"] \ar[d, "{[\Delta]}"]
      & {X} \ar[d, "{[P]}"] \\
      {\stGH_{n}} \ar[r, "{s_1^{\stGH},\cdots, s_n^{\stGH}}"] &
      {\stGH_{n+1}} \ar[r, "{p^{\stGH}_{n}}"] &
      {\stGH_{n}}.
    \end{tikzcd}
    \tag{\(\dagger\)}
  \end{equation}
\end{lemma}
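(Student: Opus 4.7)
The plan is to construct the morphisms $[P]$ and $[\Delta]$ explicitly via the 2-Yoneda lemma and then verify that both the left and right squares of \eqref{lem: univ family eq: diag} are 2-Cartesian---implicitly reading the top and bottom labels as asserting one commutative square for each choice of $i\in\{1,\ldots,n\}$. Since $p:P\to X$ is a representable proper \Hfib and $X\in\tbMet$, pulling back along $\id_X$ makes $p:P\to X$ a proper \Hfib in $\Met$, and by \cref{prop-Hfib-tot-bd} one has $P\in\tbMet$. Thus the tuple $(p:P\to X,s_1,\ldots,s_n)\in\stGH_n(X)$ determines, via 2-Yoneda, a morphism $[P]:X\to \stGH_n$. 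Next, by \cref{prop-rep-proper-Hfib} \ref{prop-rep-proper-Hfib-bc}, the first projection $q_1:P\times_XP\to P$ is a representable proper \Hfib, and together with the sections $\tilde{s}_j\colon a\mapsto(a,s_j(p(a)))$ ($j=1,\ldots,n$) and the diagonal $\Delta_P\colon a\mapsto(a,a)$, one obtains an object of $\stGH_{n+1}(P)$ which yields $[\Delta]:P\to \stGH_{n+1}$ via 2-Yoneda.

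For the right square, I would unpack the 2-fiber product $\stGH_{n+1}\times_{p^{\stGH}_n,\stGH_n,[P]}X$. An object over $T\in\tbMet$ consists of a morphism $f:T\to X$, an object $(\pi\colon Q\to T,\tau_1,\ldots,\tau_{n+1})\in\stGH_{n+1}(T)$, and an isomorphism $(\pi,\tau_1,\ldots,\tau_n)\cong f^{*}(p,s_1,\ldots,s_n)$ in $\stGH_n(T)$. Transporting $\tau_{n+1}$ across this isomorphism yields a section of the first projection $T\times_XP\to T$, which is equivalent to a morphism $\tau:T\to P$ satisfying $p\circ \tau=f$. Hence this groupoid of data is canonically equivalent to $\Hom_{\Met}(T,P)$, so $\stGH_{n+1}\times_{\stGH_n}X\simeq P$; a direct check then shows that this equivalence is realized by $(p,[\Delta])$ as in the diagram.

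For each left square, a short direct computation---pulling back the object $(q_1,\tilde{s}_1,\ldots,\tilde{s}_n,\Delta_P)$ along $s_i:X\to P$ via the canonical identification $X\times_{s_i,P,q_1}(P\times_XP)\cong P$, $(x,(s_i(x),b))\leftrightarrow b$---shows that $[\Delta]\circ s_i\cong s_i^{\stGH}\circ[P]$ via the natural 2-morphism, so each left square 2-commutes. I then invoke the pasting lemma for 2-Cartesian squares: the outer rectangle has horizontal composites $p\circ s_i=\id_X$ on top and $p^{\stGH}_n\circ s_i^{\stGH}=\id_{\stGH_n}$ on bottom, with the same vertical morphism $[P]$ on both sides, and is therefore trivially 2-Cartesian; combined with the 2-Cartesianness of the right square established above, the pasting lemma forces each left square to be 2-Cartesian as well.

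The main obstacle will be the careful bookkeeping of 2-morphisms and coherence data when identifying the various fiber products and sections---in particular, verifying that the 2-cell in the outer rectangle obtained by horizontally pasting the left and right 2-cells agrees with the identity 2-cell used to witness its trivial 2-Cartesianness. Once these coherences are formalized, the 2-Cartesianness of each square follows directly by unwinding definitions along the lines sketched above.
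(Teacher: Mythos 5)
Your proposal is correct and follows essentially the same route as the paper: define \([P]\) and \([\Delta]\) via the 2-Yoneda lemma using the fiber-product family \(P\times_XP\) with the sections \(s_i\) and the diagonal (your choice of projection is just the mirror of the paper's), and prove the right square is 2-Cartesian by unpacking the 2-fiber product \(\stGH_{n+1}\times_{\stGH_n}X\) and identifying its \(T\)-points with \(\Hom_{\Met}(T,P)\). Your explicit pasting-lemma argument for the left squares makes precise a step the paper leaves implicit ("it suffices to prove that the right small square is 2-Cartesian"), but it is the same argument in substance.
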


\begin{proof}
  Write \(p':P\times_XP\to P\) for the first projection and \(s_i'\dfn s_i\times_X\id_P:P\to P\times_XP\) (\(i\in \{1,\cdots,n\}\)).
  Since the family \((p',(s_1',\cdots,s_n',\Delta_{P/X}))\) determines an object of \(\stGH_{n+1}\) over \(P\), it follows from the 2-Yoneda lemma that there exists a morphism \([\Delta]:P\to\stGH_{n+1}\) classifying the object \[(p\times_X\id_P, (s_1\times_X\id_P,\cdots,s_n\times_X\id_P,\Delta_{P/X}))\in\stGH_{n+1}(P).\]
  By the definition of the morphisms \(p^{\stGH}_{n}, s^{\stGH}_{i}\) (\(i\in \{1,\cdots,n\}\)), the diagram \cref{lem: univ family eq: diag} is (strictly) commutative.
  Thus, to complete the proof of \cref{lem: univ family}, it suffices to prove that the right small square in the diagram \cref{lem: univ family eq: diag} is 2-Cartesian.

  Write \(\mcP\dfn \stGH_{n+1}\times_{\stGH_n,[P]}X\) for the 2-fiber product and \(\tau:P\to \mcP\) for the natural morphism induced from \((p, [\Delta])\) (and the fact that \([P]\circ p = p_n^{\stGH}\circ [\Delta])\). 
  By the universality of the 2-fiber product, for each object \(T\in\tbMet\), the functor of groupoids \(\tau_T:P(T)\to \mcP(T)\) is fully faithful.
  Hence, to complete the proof of \cref{lem: univ family}, it suffices to prove that \(\tau\) is essentially surjective. 
  Let \(T\in\tbMet\) be an object and \(\bullet\in\mcP(T)\) an object. 
  Then, \(\bullet\) consists of the triple \((f, [q], \alpha)\) of a morphism \(f:T\to X\), an object \([q]=(q:Q\to T; t_1:T\to Q,\cdots, t_{n+1}:T\to Q)\), and an isomorphism \(\alpha:Q\tosim P\times_XT\) in \(\Met\) such that for each index \(i\in\{1,\cdots,n\}\), \(\alpha\circ t_i = s_i\times_X \id_T\). 
  Write \(\pr_P:P\times_XT\to P\) for the natural projection.
  Then, the isomorphism \(\alpha\) induces an isomorphism \(\bullet=(f,[q],\alpha)\tosim \tau_T(\pr_P\circ (s_{n+1}\times_X\id_T))\) in \(\mcP(T)\). 
  Thus, \(\tau\) is an equivalence. 
  This completes the proof of \cref{lem: univ family}.
\end{proof}

\begin{corollary}\label{prop-univ-family-Hfib}
  For any \(n\in \N\), the morphism \(p^{\stGH}_n:\stGH_{n+1}\to \stGH_{n}\) of stacks in groupoids over \(\tbMet_{\lhftop}\) is a representable proper \Hfib \confer{\cref{def-Hfib-for-stacks}}.
  In particular, for any \(n\in\N\), \(\stGH_n\) is a naive metric stack of quotient type.
\end{corollary}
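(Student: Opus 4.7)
My plan is to prove the two assertions in sequence: first the representability of $p_n^{\stGH}$ as a proper \Hfib, and then, by induction on $n$, the quotient-type property of $\stGH_n$.

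For the first assertion, I would verify the condition in \cref{def-rep-prop-Hfib} directly. Given any object $T\in \tbMet$ and any morphism $b:T\to \stGH_n$, the 2-Yoneda lemma identifies $b$ with an object $(q:Q\to T;\,t_1,\dots,t_n)\in \stGH_n(T)$, so that $q$ is a proper \Hfib in $\Met$. Since $T\in\tbMet$ and each fiber $q^{-1}(x)$ is compact (hence totally bounded), \cref{prop-Hfib-tot-bd} gives $Q\in \tbMet$. Moreover, \cref{prop-rep-proper-Hfib} \ref{prop-rep-proper-Hfib-met} shows that $q$ is a representable proper \Hfib in the stack-theoretic sense. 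The hypotheses of \cref{lem: univ family} are therefore satisfied with $X=T$, $P=Q$, and sections $(t_1,\dots,t_n)$. The right-hand square of its diagram \cref{lem: univ family eq: diag} is 2-Cartesian and exhibits a canonical equivalence $\stGH_{n+1}\times_{\stGH_n}T\simeq Q$ in $\Stack_{\lhftop}(\tbMet)_{/T}$ whose structure map to $T$ is precisely the proper \Hfib $q$. This is exactly what \cref{def-rep-prop-Hfib} demands.

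For the second assertion, I would proceed by induction on $n$. The base case $n=0$ is \cref{prop-stGH-quot}, which states that $\stGH=\stGH_0$ is a naive metric stack of quotient type. For the inductive step, assume $\stGH_n$ is a naive metric stack of quotient type. By the assertion just proved, $p_n^{\stGH}:\stGH_{n+1}\to\stGH_n$ is a representable proper \Hfib. Applying \cref{prop-naive-metric-of-quot-and-rep-proper-Hfib} to this morphism then yields that $\stGH_{n+1}$ is a naive metric stack of quotient type, completing the induction.

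The main obstacle is essentially bookkeeping: one has to confirm that the representability hypothesis of \cref{lem: univ family} is met for the classifying morphism of an arbitrary object of $\stGH_n(T)$. This reduces at once to \cref{prop-rep-proper-Hfib} \ref{prop-rep-proper-Hfib-met}, so no genuinely new argument is required beyond stringing together the results already proved.
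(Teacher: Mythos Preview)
Your proof is correct and follows essentially the same approach as the paper's, which tersely cites \cref{lem: univ family}, \cref{prop-naive-metric-of-quot-and-rep-proper-Hfib}, and the 2-Yoneda lemma. You have simply made explicit the verification of the hypotheses of \cref{lem: univ family} (via \cref{prop-Hfib-tot-bd} and \cref{prop-rep-proper-Hfib}~\ref{prop-rep-proper-Hfib-met}) and spelled out the induction with base case \cref{prop-stGH-quot}, which the paper leaves implicit.
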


\begin{proof}
  \Cref{prop-univ-family-Hfib} follows immediately from \cref{prop-naive-metric-of-quot-and-rep-proper-Hfib} and \cref{lem: univ family}, together with the 2-Yoneda lemma.
\end{proof}

Finally, we prove the statement asserting that \(\stGH\) is a ``fine moduli'' of compact metric spaces with universal object \(p_0^{\stGH}:\stGH_1\to \stGH = \stGH_0\). 

\newcommand{\SmRP}{\mathsf{Sm}_{rp}}

\begin{theorem}\label{prop-fine-moduli-GH}
  Let \(\mcX\) be a naive metric stack of quotient type. 
  Write \(\SmRP(\mcX)\) for the 2-groupoid defined as follows:
  \begin{itemize}
    \item An object of \(\SmRP(\mcX)\) is a representable proper \Hfib \(p:\mcP\to \mcX\).
    \item 
    A morphism in \(\SmRP(\mcX)\) from an object \(p:\mcP\to \mcX\) to \(p':\mcP'\to \mcX\) is a pair \((f, \rho)\) where \(f:\mcP\to \mcP'\) is an equivalence and \(\rho:p'\circ f\tosim p\) is a 2-morphism.
    \item 
    A 2-morphism in \(\SmRP(\mcX)\) from a morphism \((f:p\to p', \rho:p'\circ f\tosim p)\) to a morphism \((f':p\to p', \rho':p'\circ f'\tosim p)\) is a 2-morphism \(\alpha:f\tosim f'\) such that \(\rho'\circ (p'\star \alpha) = \rho\). 
  \end{itemize}
  We regard the groupoid \(\Hom_{\Stack_{\lhftop}(\tbMet)}(\mcX, \stGH)\) as a 2-groupoid with only identity 2-morphisms. 
  Then, by taking the 2-pullback of \(p_0^{\stGH}:\stGH_1\to \stGH_0=\stGH\), we obtain a functorial (with respect to \(\mcX\)) equivalence of 2-groupoids 
  \[\Psi_{\mcX}:\Hom_{\Stack_{\lhftop}(\tbMet)}(\mcX, \stGH)\tosim \SmRP(\mcX).\]
\end{theorem}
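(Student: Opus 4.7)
The plan is to construct $\Psi_{\mcX}$ explicitly as pullback of the universal family and to verify equivalence of 2-groupoids by descent along the quotient presentation of $\mcX$. Given $f:\mcX\to\stGH$, I set $\Psi_{\mcX}(f):=\mcX\times_{f,\stGH,p_0^{\stGH}}\stGH_1\to\mcX$. This is a representable proper \Hfib by \cref{prop-univ-family-Hfib} and \cref{prop-rep-proper-Hfib} \ref{prop-rep-proper-Hfib-bc}. The evident action of $\Psi_\mcX$ on 1- and 2-morphisms via the universal property of 2-pullback makes $\Psi_\mcX$ a 2-functor, clearly natural in $\mcX$.

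For essential surjectivity, I use \cref{def-of-quot-type} to choose a \fHfib $u:U\to\mcX$ with $U\in\Met$ such that the induced morphism $[U/\mcR]\to\mcX$ is an equivalence, where $\mcR:=U\times_{\mcX}U\in\Sh_{\lhftop}(\tbMet)$. Given a representable proper \Hfib $p:\mcP\to\mcX$, the pullback $\mcP\times_{\mcX}U$ is representable by a metric space and the induced morphism to $U$ is a proper \Hfib in $\Met$ by \cref{prop-rep-proper-Hfib} \ref{prop-rep-proper-Hfib-bc} and \cref{prop-rep-proper-Hfib-in-met}; call this $P_U\to U$. By the 2-Yoneda lemma (extended to sheaves, using that $\stGH$ is a stack by \cref{prop-stGHn-is-a-stack}) it corresponds to a morphism $g:U\to\stGH$. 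The two pullbacks of $\mcP$ along $\pr_1,\pr_2:\mcR\rightrightarrows U$ are both canonically equivalent to $\mcP\times_\mcX\mcR$, yielding a 2-isomorphism $\sigma:g\circ\pr_1\tosim g\circ\pr_2$; the cocycle condition on $U\times_\mcX U\times_\mcX U$ reduces to the coherence of these canonical identifications. Descent via the universal property of $[U/\mcR]$ \confer{\cref{def-quot-stack}} produces $f:\mcX\to\stGH$, and comparison of the descent data for $\Psi_\mcX(f)$ and $\mcP$ yields an equivalence $\Psi_\mcX(f)\simeq \mcP$ in $\SmRP(\mcX)$.

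For fully faithfulness, a 2-morphism $f\Rightarrow f'$ yields an equivalence $\Psi_\mcX(f)\tosim \Psi_\mcX(f')$ over $\mcX$ by pullback; conversely, any equivalence $(h,\rho)\in\SmRP(\mcX)$ pulls back along $u$ to an isomorphism $f\circ u\tosim f'\circ u$ compatible with the descent data on $\mcR$, hence descends to a 2-morphism via \cref{lem: lhf subcan} \ref{lem: lhf subcan ass: isom sh}, and these assignments are mutually inverse. Triviality of 2-automorphisms in the hom-groupoids of $\SmRP(\mcX)$ (matching the trivial 2-morphisms in $\Hom(\mcX,\stGH)$) follows because a 2-morphism $\alpha:h\Rightarrow h$ with $p_{\Psi_\mcX(f')}\star\alpha=\id$ becomes, after pullback to any $T\in\tbMet$ mapping to $\mcX$, a 2-morphism between morphisms of metric spaces, which must be the identity.

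The main obstacle is the descent step in essential surjectivity. Since $\mcR$ is in general only a sheaf on $\tbMet_{\lhftop}$ and not representable in $\tbMet$, \cref{lem: eff descent} does not apply directly; the cocycle condition for $\sigma$ on the triple product must instead be verified using the 2-categorical universal property of the quotient stack together with the 2-Yoneda lemma for the stack $\stGH$, with careful bookkeeping of coherences for the canonical identifications of iterated 2-fiber products $\mcP\times_\mcX \mcR\times_\mcX\mcR$.
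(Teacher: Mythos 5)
Your proposal is correct and follows essentially the same route as the paper: define \(\Psi_{\mcX}\) by 2-pullback of \(p_0^{\stGH}:\stGH_1\to\stGH\), get full faithfulness from the universal property of the 2-fiber product (resp.\ descent of 2-morphisms along the chart), and prove essential surjectivity by pulling a representable proper \Hfib \(\mcP\to\mcX\) back to the metric chart \(U\) of the quotient presentation, invoking \cref{prop-rep-proper-Hfib-in-met} to get a proper \Hfib \(Q\to U\) in \(\Met\), producing the classifying map \(U\to\stGH\) and its descent datum over \(\mcR=U\times_{\mcX}U\), and descending via the 2-coequalizer property of \([U/\mcR]\). The coherence issue you flag at the end is handled in the paper exactly as you suggest (via the universal property of the quotient stack and explicit identifications of the iterated fiber products \(\mcQ_1\cong\mcQ_2\)), so there is no gap.
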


\begin{proof}
  The functoriality of \(\Psi_{\mcX}\) follows immediately from the construction. 
  Moreover, by the universality of the 2-fiber product, the 2-functor \(\Psi_{\mcX}\) is fully faithful in the 2-categorical sense. 
  Hence, to prove \cref{prop-fine-moduli-GH}, it suffices to prove that the 2-functor \(\Psi_{\mcX}\) is essentially surjective in the 2-categorical sense (note that, once this is established, it follows that any 2-morphism in \(\SmRP(\mcX)\) between given morphisms is unique).
  Let \(p:\mcP\to \mcX\) be a representable proper \Hfib. 
  Since \(\mcX\) is of quotient type, there exist a metric space \(U\in\Met\) and a \fHfib \(u:U\to \mcX\) such that the induced morphism \([U/\mcR]\to \mcX\), where \(\mcR\dfn U\times_{\mcX}U\), is an equivalence. 
  Since \(p\) is a representable proper \Hfib, it follows from \cref{prop-rep-proper-Hfib-in-met} that there exist a 2-Cartesian square
  \[\begin{tikzcd}
    {Q} \ar[r, "{v}"] \ar[d, "{q}"'] & 
    {\mcP} \ar[d, "{p}"] \ar[dl, Rightarrow, "{\alpha}"', shorten=3mm] \\ 
    {U} \ar[r, "{u}"] & {\mcX}
  \end{tikzcd}\]
  such that \(q:Q\to U\) is a proper \Hfib in \(\Met\). 
  Then, by the proof of \cref{prop-rep-proper-Hfib-in-met}, the natural morphism \([Q/\mcS]\to \mcP\), where \(\mcS\dfn Q\times_{\mcP}Q\), is an equivalence. 

  Write 
  \begin{itemize}
    \item \([q]:U\to \stGH\) for the morphism of stacks in groupoids over \(\tbMet_{\lhftop}\) defined as follows: for each object \(T\in \tbMet\), \([q]_T(a:T\to U)\dfn [Q\times_UT\to T]\in \stGH(T)\);
    \item \(\pr_1, \pr_2:\mcR\tto U\) for the natural projections; \(\eta:u\circ \pr_1\tosim u\circ \pr_2\) for the natural 2-morphism;
    \item \(\tilde{q}_1:\mcQ_1\dfn Q\times_{U,\pr_1}\mcR\to \mcR\) and \(\tilde{q}_2:\mcQ_2\dfn Q\times_{U,\pr_2}\mcR\to \mcR\) for the natural projections; 
    \item \(r_1:\mcQ_1\to Q\) and \(r_2:\mcQ_2\to Q\) for the natural projections.
  \end{itemize}
  Since \(\mcQ_2\) is a 2-fiber product of the diagram \(\mcR\xto{u\circ \pr_2}\mcX\xgets{p}\mcP\), the triple \((\tilde{q}_1:\mcQ_1\to \mcR, v\circ r_1:\mcQ_1\to \mcP, (\tilde{q}_1\star\eta)\circ (\alpha\star r_1):q\circ v\circ r_1\tosim u\circ \pr_2\circ \tilde{q}_1)\) induces an isomorphism of sheaves \(\alpha:\mcQ_1\to \mcQ_2\) and a 2-morphism \(\beta:v\circ r_2\circ \alpha\tosim v\circ r_1\). 
  Then, \(\alpha\) induces a 2-morphism \([\alpha]: [q]\circ \pr_1\tosim [q]\circ \pr_2\). 
  Since \([U/\mcR]\tosim \mcX\), the triple \(([q]\circ \pr_1, [q]\circ \pr_2, [\alpha])\) induces a morphism of stacks in groupoids \([\mcP]:\mcX\to \stGH\). 
  Moreover, by construction, the triple \((r_1, r_2\circ \alpha, \beta)\) induces an isomorphism of sheaves \(\mcQ_1\tosim \mcS\). 
  Hence, we may regard \(\mcS=\mcQ_1\).
  Write \([\Delta_{Q/U}]:Q\to \stGH_1\) for the morphism of stacks in groupoids over \(\tbMet_{\lhftop}\) defined as follows: for each object \(T\in\tbMet_{\lhftop}\), \[[\Delta_{Q/U}]_T(a:T\to Q)\dfn [(Q\times_UT\to T, (a,\id_T): T\to Q\times_UT)]\in \stGH_1(T).\]
  Note that the isomorphism of sheaves \(\alpha:\mcQ_1\tosim \mcQ_2\) induces, via the natural isomorphisms \(\mcQ_1\times_{q\circ r_1, U}Q\tosim \mcQ_1\times_{\tilde{q}_1,\mcR,\tilde{q}_1}\mcQ_1\) and \(\mcQ_1\times_{q\circ r_2\circ \alpha, U}Q\tosim \mcQ_1\times_{\tilde{q}_1,\mcR,\tilde{q}_2}\mcQ_2\), an isomorphism \(\alpha_1:\mcQ_1\times_{q\circ r_1, U}Q\tosim \mcQ_1\times_{q\circ r_2\circ \alpha, U}Q\) over \(\mcQ_1\) such that \(\alpha_1\circ \Gamma_{r_1} = \Gamma_{r_2\circ \alpha}\), where \(\Gamma_{r_1}:\mcQ_1\to \mcQ_1\times_{q\circ r_1,U}Q\) and \(\Gamma_{r_2\circ \alpha}:\mcQ_1\to\mcQ_1\times_{q\circ r_2\circ \alpha,U}Q\) are the graph of morphisms \(r_1, r_2\circ \alpha\) over \(U\). 
  Hence, the isomorphism \(\alpha_1\) induces a 2-morphism \([\Delta_{Q/U}]\circ r_1\tosim [\Delta_{Q/U}]\circ r_2\circ \alpha\) and a morphism \([\Delta_{\mcP/\mcX}]:\mcP\to \stGH_1\) of stacks in groupoids over \(\tbMet_{\lhftop}\). 
  Finally, by these constructions, the diagram 
  \[\begin{tikzcd}
    {\mcP} \ar[r, "{[\Delta_{\mcP/\mcX}]}"] \ar[d, "{p}"'] &[5mm] {\stGH_1} \ar[d, "{p_0^{\stGH}}"] \\
    {\mcX} \ar[r, "{[\mcP]}"] & {\stGH}
  \end{tikzcd}\]
  is 2-Cartesian.
  This completes the proof of \cref{prop-fine-moduli-GH}.
\end{proof}

\end{document}